\documentclass[11pt]{article}
\usepackage{amsmath, amsthm, amssymb}
\usepackage{tkz-graph, subfig}
\usepackage{marginnote}
\usepackage{vwcol}
\usepackage{verbatim}
\usepackage[top=1.0in, bottom=1.0in, left=1.0in, right=1.0in]{geometry}
\usepackage{color, textcomp}
\usepackage[margin=.5in,size=footnotesize]{caption}
\pagestyle{plain}

\tikzstyle{VertexStyle} = []
\tikzstyle{EdgeStyle} = [line width=1.2pt]
\tikzstyle{labeledStyle}=[shape = circle, minimum size = 6pt, inner sep = 0pt, outer sep = 0pt]
\tikzstyle{unlabeledStyle2}=[line width = 2.0pt, shape = circle, minimum size = 5pt, inner sep = 2pt, outer sep = 0pt, draw]
\tikzstyle{unlabeledStyle}=[shape = circle, minimum size = 5pt, inner sep = 2pt, outer sep = 0pt, draw]

\usepackage[]{hyperref}

\usepackage{sectsty}
\allsectionsfont{\sffamily}

\newcommand\core{{\textrm{core}}}
\newcommand\pot{{\textrm{pot}}}

\newcommand\none{\,\bowtie}

\addtolength{\marginparsep}{-25pt}
\setlength{\marginparwidth}{65pt}

\setcounter{secnumdepth}{5}
\setcounter{tocdepth}{5}

\theoremstyle{plain}
\newtheorem{thm}{Theorem}
\newtheorem{prop}[thm]{Proposition}
\newtheorem{lem}[thm]{Lemma}
\newtheorem{conj}{Conjecture}
\newtheorem{cor}[thm]{Corollary}

\newtheorem*{StrongMinor}{Strong Minor Lemma}
\newtheorem*{main-thm}{Main Theorem}
\newtheorem{clm}{Claim}

\theoremstyle{definition}
\newtheorem{defn}{Definition}
\theoremstyle{remark}

\newcommand{\fancy}[1]{\mathcal{#1}}

\newcommand\B{\fancy{B}}
\newcommand{\G}{\mathcal{G}}

\newcommand{\Int}{\textrm{Int}}

\newcommand{\card}[1]{\left|#1\right|}

\newcommand\Ggood{\G_{good}}
\newcommand\Gbad{\G_{bad}}
\newcommand\Gmixed{{K_{3,3}-e}}
\newcommand\Gcycles{\G_{cycles}}

\newcommand{\vph}{\varphi}
\newcommand{\sqed}{$\hfill\diamond$}

\newcommand{\erdos}{Erd\H{o}s}

\newcommand{\aside}[1]{\marginnote{\scriptsize{#1}}[0cm]}
\newcommand{\aaside}[2]{\marginnote{\scriptsize{#1}}[#2]}
\newcommand\Emph[1]{\textit{#1}\aside{#1}}
\newcommand\EmphE[2]{\textit{#1}\aaside{#1}{#2}}

\title{A Characterization of $(4,2)$-Choosable Graphs}
\author{Daniel W. Cranston\thanks{Department of Mathematics and Applied
Mathematics, Virginia Commonwealth University, Richmond, VA;
\texttt{dcranston@vcu.edu}; 
This research is partially supported by NSA Grant 
H98230-16-0351.}}


\begin{document}
\maketitle
\abstract{A graph $G$ is \emph{$(a,b)$-choosable} if given any list assignment
$L$ with $\card{L(v)}=a$ for each $v\in V(G)$ there exists a function $\vph$ such
that $\vph(v)\in L(v)$ and $\card{\vph(v)}=b$ for all $v\in V(G)$, and whenever
vertices $x$ and $y$ are adjacent $\vph(x)\cap \vph(y)=\emptyset$.  Meng, Puleo,
and Zhu conjectured a characterization of (4,2)-choosable graphs. We prove
their conjecture.
}

\section{Introduction}

\subsection{History}
All graphs we consider are finite and simple (we forbid loops and multiple
edges).
A graph $G$ is \emph{$(a,b)$-choosable}\aside{$(a,b)$-choosable} if given any
list assignment $L$ with
$\card{L(v)}=a$ for each $v\in V(G)$, there exists a function $\vph$ such that
$\vph(v)\in L(v)$ and $\card{\vph(v)}=b$ for all $v\in V(G)$ and $\vph(v)\cap
\vph(w)=\emptyset$ for all $vw\in E(G)$.  
In other words, $\vph$ assigns to each vertex a subset of size $b$ of its $a$
allowable colors, and any adjacent vertices are assigned disjoint subsets.
Such an $L$ is an \emph{$a$-list assignment} (or
\emph{$a$-assignment}\aaside{$a$-assignment}{-.5cm}, for short)
and such a $\vph$ is a \emph{$b$-fold $L$-coloring}\aside{$b$-fold $L$-coloring} of $G$.
As a special case, a graph is
\emph{$(a,b)$-colorable}\aaside{$(a,b)$-colorable}{0cm}
if it has a $b$-fold $L$ coloring, when $L(v)=\{1,\ldots,a\}$ for all $v$.

The concept of $(a,b)$-choosability was introduced in the late 1970s
by \erdos, Rubin, and Taylor~\cite{ERT}.  In the same paper, Rubin
characterized (2,1)-choosable graphs.  To state his result, we
need two definitions.  A \emph{$\theta$-graph},
$\theta_{a,b,c}$\aside{$\theta_{a,b,c}$}, is formed from
vertex disjoint paths with lengths $a$, $b$, $c$ by identifying one endpoint of
each path to form a vertex of degree 3 and also identifying the other endpoint
of each path to form a second vertex of degree 3.  We occasionally write
$\theta_{a,b,c,d}$ for the analogous graph with a fourth path, of length $d$.
The \emph{core}\aside{core} of a
connected graph $G$, denoted $\core(G)$, is its maximum subgraph with minimum
degree at least 2.  Alternately, the core is the result when we
repeatedly delete vertices of degree 1, for as long as possible.  
A graph is (2,1)-choosable if and only if its core is
(this is a special case of Proposition~\ref{prop1}).  Rubin showed that a
connected graph is (2,1)-choosable if and only if its core is $K_1$, $C_{2s}$, or
$\theta_{2,2,2s}$.

\erdos, Rubin, and Taylor concluded their paper with a number of conjectures
and open questions.  One asks whether a graph being $(a,b)$-choosable implies
that it is $(am,bm)$-choosable.  This question remains largely
open\footnote{After this paper was submitted, Dvo\v{r}\'ak, Hu, and Sereni~\cite{DHS}
constructed a (4,1)-choosable graph that is not (8,2)-choosable.}, though some
cases have been resolved, affirmatively.  Tuza and Voigt~\cite{TV} used Rubin's
characterization of (2,1)-choosable graphs to answer this question positively when
$(a,b)=(2,1)$.  In related work, Voigt~\cite{voigt98} showed that when $m$ is an
odd integer, a graph is $(2m,m)$-choosable precisely when it is
$(2,1)$-choosable.  (See Lemma~\ref{GT-lem} for a short proof published
in~\cite{GT}.)
Combining these two results shows that whenever
$a$ and $m$ are integers, with $a$ odd, if a graph is $(2a,a)$-choosable, then
it is $(2am,am)$-choosable.  As far as we know, $(a,b)$-choosable
graphs have been characterized only when $(a,b)=(2m,m)$ and $m$ is odd.

Here we focus mainly on $(4,2)$-choosable graphs.  However, in
Section~\ref{sec:general-m}, we return to the more general case of
$(2m,m)$-choosable graphs, when $m$ is even.
Clearly such a graph must be bipartite, since no odd cycle is
$(2m,m)$-colorable.  Alon, Tuza, and Voigt~\cite{ATV} showed that if $G$ is
$(a,b)$-colorable, then there exists an integer $m$ such that $G$ is
$(am,bm)$-choosable.  Thus, for every bipartite graph $G$, there exists an
$m$ such that $G$ is $(2m,m)$-choosable.  It is easy to construct examples
showing that $m$ must depend on $G$ (see Proposition~\ref{construction-prop}). 
That is, there does not exist a universal constant $m$ such that every
bipartite graph is $(2m,m)$-choosable.

Before moving on, we remark briefly about
$(2m,m)$-paintability.  (Paintability is an online version of list-coloring;
for a definition, see~\cite{MMZ}.)  A connected graph is $(2m,m)$-paintable if
and only if its core is $K_1$, $\theta_{2,2,2}$, or $C_{2s}$.  Zhu~\cite{zhu} proved this
when $m=1$ and it was extended to the general case by Mahoney, Meng, and
Zhu~\cite{MMZ}.  As with choosability, a graph is $(2m,m)$-paintable if and only
if its core is.  That even cycles are $(2m,m)$-paintable follows directly from
the Kernel Lemma (slightly generalized), but handling $\theta_{2,2,2}$ is harder. 
Most of the work in~\cite{MMZ} goes to proving the other direction: for every
$m$ all other graphs are not $(2m,m)$-paintable.

Now we consider $(4,2)$-choosability.
Meng, Puleo, and Zhu~\cite{MPZ} conjectured a characterization of
(4,2)-choosable graphs, and our main result is to confirm their conjecture.
We write $H_1*H_2$ to denote all graphs formed from vertex-disjoint copies of $H_1$
and $H_2$ by adding a path (possibly of length 0) from any vertex in $H_1$ to
any vertex in $H_2$.


\begin{conj}
A connected graph is (4,2)-choosable if and only if its core is one of the
following (where $s$ and $t$ are positive integers): (i) $K_1$,
(ii) $C_{2s}$, (iii) $\theta_{2,2s,2t}$, (iv) $\theta_{1,2s+1,2t+1}$, (v) $K_{2,4}$, 
(vi) a graph formed from $K_{3,3}-e$ by subdividing a single edge incident to a
vertex of degree 2 an even number of times, 
(vii) $C_{2s}*C_{2t}$,
(viii) $\theta_{2,2,2}*C_{2s}$, or
(ix) $(C_4*C_{2s})*C_{2t}$ where the $C_4$ contains two cut-vertices in the
final graph and they are non-adjacent. 
\label{main-conj}
\end{conj}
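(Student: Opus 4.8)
The plan is to prove both directions of Conjecture~\ref{main-conj}. By Proposition~\ref{prop1} a connected graph is $(4,2)$-choosable if and only if its core is, so we may assume throughout that $G=\core(G)$; if $\core(G)=K_1$ then $G$ is a tree and is trivially $(4,2)$-choosable, so from now on $G$ is connected with minimum degree at least $2$. As observed above, $G$ must be bipartite, since no odd cycle is even $(4,2)$-colorable; moreover a subgraph $H$ of a $(4,2)$-choosable graph is again $(4,2)$-choosable (extend any list assignment on $H$ to all of $G$ with arbitrary $4$-lists and restrict the coloring), so every graph we exhibit as \emph{not} $(4,2)$-choosable rules out all graphs containing it. These two observations---the core reduction and subgraph-monotonicity---let us organize the proof around a catalog of ``good'' building blocks (for sufficiency) and a catalog of ``bad'' obstructions (for necessity), the common technical device being, for a graph $H$ with one or two distinguished vertices, a precise description of which $2$-subsets of the lists at those vertices can be realized in an $L$-coloring of $H$, as $L$ ranges over all $4$-assignments.

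For sufficiency we show that each of (ii)--(ix) is $(4,2)$-choosable. The cornerstone is a \emph{flexibility lemma} for even cycles: for every $4$-assignment $L$ and every vertex $v$ of $C_{2s}$, the family of $2$-subsets of $L(v)$ realizable as $\varphi(v)$ in an $L$-coloring is large and suitably spread out; in particular $C_{2s}$ is itself $(4,2)$-choosable. The graphs in (iii)--(vi) ($\theta_{2,2s,2t}$, $\theta_{1,2s+1,2t+1}$, $K_{2,4}$, and the even subdivisions of $K_{3,3}-e$ in (vi)) are handled directly: color the underlying even cycle(s) via the flexibility lemma so that the branch vertices avoid a ``bad'' color pair, and finish the remaining short paths greedily. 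For $K_{2,4}$ this reduces to the finite observation that when $L(x)\cap L(y)$ is large one may take $\varphi(x)=\varphi(y)$, and otherwise four leaves cannot forbid all $\binom{4}{2}^2$ candidate sets $\varphi(x)\cup\varphi(y)$. For the $*$-families (vii)--(ix) we prove a \emph{gluing lemma}: if $H_1$ and $H_2$ are $(4,2)$-choosable and each has sufficient flexibility at the vertex where the connecting path attaches, then $H_1*H_2$ is $(4,2)$-choosable, because the flexibility at the two ends lets us thread a $2$-fold coloring along the connecting path of whatever length; even cycles, $\theta_{2,2,2}=K_{2,3}$, and $C_4$ with two marked non-adjacent corners all possess the requisite (single- or double-rooted) flexibility, and the property survives the $*$-operation, so (vii)--(ix) follow by induction on the number of pieces.

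For necessity we show that a connected $(4,2)$-choosable core $G$ is one of (ii)--(ix). The heart of the matter is a catalog of obstructions---graphs proven \emph{not} $(4,2)$-choosable by explicit bad list assignments---consisting of finitely many small graphs ($K_{3,3}$, $K_4$, $K_{2,5}$, small bad thetas, three cycles meeting at a common vertex, $\theta_{2,2,2}*\theta_{2,2,2}$, and so on) together with \emph{parametric families}: every bipartite theta graph other than $\theta_{2,2s,2t}$ and $\theta_{1,2s+1,2t+1}$, every bipartite $K_4$-subdivision other than those in (vi), $K_{2,n}$ for $n\ge 5$ and every $\theta$-graph on four or more paths other than $K_{2,4}$, and the oversized $*$-configurations ($C_{2a}*C_{2b}*C_{2c}$ unless it is of type (ix), $\theta_{2,2,2}*C_{2s}*C_{2t}$, $\theta_{2,2s,2t}*C_{2u}$ unless $s=t=1$, $K_{2,4}*C_{2s}$, and any product of four or more nontrivial pieces)---with enough closure under even subdivision to make the families usable. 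By subgraph-monotonicity $G$ contains none of these, and a structural analysis then forces the conclusion. If $G$ is $2$-connected we use an ear decomposition: zero ears gives an even cycle; one ear gives a theta graph, which by the theta obstructions must be $\theta_{2,2s,2t}$ or $\theta_{1,2s+1,2t+1}$; two or more ears force, via the $K_{2,5}$, $K_4$-subdivision, and oversubdivision obstructions, that $G$ is $K_{2,4}$ or an even subdivision of $K_{3,3}-e$ as in (vi). If $G$ is not $2$-connected we analyze its block--cut tree: the ``three branches at a vertex'', ``four pieces'', and misplaced-piece obstructions force this tree to be a path whose nontrivial blocks are even cycles, except that a $\theta_{2,2,2}$ may appear only as an end block accompanied by exactly one other (cycle) block, and a $C_4$ with non-adjacent cut vertices only as the middle of exactly three blocks---that is, $G$ is of type (vii), (viii), or (ix).

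The step I expect to be the main obstacle is constructing the parametric families of bad list assignments: since $(4,2)$-choosability has infinitely many minimal obstructions, no finite list can suffice, and each family needs a uniform construction. I anticipate these come from a ``propagation'' analysis---lay down lists along a long path or cycle so that at each step the set of partial colorings that extends shrinks in a controlled way (a potential-function or transfer-type argument), arranged so that the individual pieces each admit colorings but no globally consistent one exists---together with the bookkeeping needed to ensure the resulting obstruction list is \emph{complete enough} that the structural analysis goes through. The companion difficulty is calibrating the flexibility invariant in the gluing lemma precisely: strong enough to be threaded through an arbitrary connecting path, yet weak enough to hold genuinely for even cycles, $\theta_{2,2,2}$, and the marked $C_4$---while, dually, the failure of that invariant (for $C_4$ with \emph{adjacent} marked corners, for $\theta_{2,2,2}$ placed centrally, or for a cycle of length exceeding $4$ in a central position) must be exactly what powers the matching obstructions. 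Making this boundary between ``flexible enough'' and ``not'' fall in precisely the right place is, I expect, where the bulk of the work lies.
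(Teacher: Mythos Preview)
Your overall architecture---reduce to the core, split into the 2-connected case versus the block--cut tree, build a catalog of obstructions for necessity, and glue pieces for sufficiency---matches the paper's. But you are missing the one technical device that makes the paper's version tractable, and it is precisely the device that dissolves what you flag as your ``main obstacle.''

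The paper does \emph{not} construct parametric families of bad list assignments. Instead it proves the \emph{Strong Minor Lemma}: if $H$ is not $(2m,m)$-choosable and $G$ contains $H$ as a strong minor (delete a vertex, identify all its neighbors; iterate), then $G$ is not $(2m,m)$-choosable. This is strictly stronger than subgraph-monotonicity, and it is what collapses every infinite family you list to a \emph{finite} set of small obstructions: $\mathcal{G}_{bad}=\{\theta_{3,3,3},\theta_{2,2,2,4},K_{3,3},K_{2,5},Q_3-v\}$ together with five small ``two-cycles'' graphs $\mathcal{G}_{cycles}$. Your proposed propagation/transfer-type constructions for arbitrary $\theta_{a,b,c}$, arbitrary $K_4$-subdivisions, and so on are simply not needed; one bad list assignment on each of a handful of graphs with at most ten vertices suffices. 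Relatedly, the 2-connected structural analysis is not a raw ear-decomposition case split but a lemma (Lemma~\ref{lem-struct}) saying every 2-connected bipartite graph is in $\mathcal{G}_{good}$, or contains two nearly-disjoint cycles, or contains a strong subdivision of something in $\mathcal{G}_{bad}$, or is a strong subdivision of $K_{3,3}-e$.

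For the multi-block cases the paper's mechanism is also sharper than a generic gluing lemma: it introduces \emph{$k$-forcing} 4-assignments (Definition~\ref{def1}) that restrict the colorings available at a cut-vertex to a set of a prescribed shape ($2_{in}$, $3_{out}$, $4_{out}$, etc.), proves exactly which blocks admit which forcing types, and combines incompatible forcings at a cut-vertex to kill all six colorings there. Your ``flexibility invariant'' is morally the same object, but the paper's taxonomy of forcing types is what makes the boundary fall exactly where it must. Finally, for sufficiency the paper does not verify the infinite families directly: contraction lemmas (Lemmas~\ref{edge-contract-lem} and~\ref{path-contract-lem}) reduce (vi), (viii), (ix) to four explicit graphs on 8--10 vertices, and the 2-connected one among them is verified by computer; you should not expect a clean by-hand argument there.
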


\begin{main-thm}
Conjecture~\ref{main-conj} is true.
\end{main-thm}

\subsection{Proof Outline}

The proof of the Main Theorem has a number of cases, but the general outline is
easy to follow, so we present it here.  First we need a few more definitions
and a key lemma.

For graphs $G$ and $H$, we say that $G$ \emph{contains a strong minor of
$H$}\aside{strong minor} if
$H$ can be formed from some subgraph of $G$ by repeatedly applying the following
operation: delete a vertex and identify all of its neighbors.  Further, we say
that $G$ \emph{contains a strong subdivision of $H$}\aside{strong subdivision}
if $H$ can be formed from some subgraph of $G$ by repeatedly applying the
following operation: delete a vertex of degree 2 with a neighbor of degree 2
and identify the two neighbors of the deleted vertex.  Clearly, if $G$ contains
a strong subdivision of $H$ then $G$ contains a strong minor of $H$, but not
vice versa.
The following lemma is from~\cite{MPZ}, although a slightly weaker form appeared
in~\cite{voigt98}, and both versions have their roots in~\cite{ERT}, which
contains similar ideas for (2,1)-choosability.  

\begin{StrongMinor}
If $H$ is not $(2m,m)$-choosable, and $G$ contains a strong minor of $H$, then $G$ is
not $(2m,m)$-choosable.
\label{Strong-Minor-lem}
\end{StrongMinor}
\begin{proof}
Suppose that $G'$ is formed from a subgraph of $G$ by deleting a single vertex,
$v$, and identifying its neighbors.  We show that if $G'$ is not
$(2m,m)$-choosable, then neither is $G$.  Let $v'$ be the newly formed vertex
in $G'$.  Let $L'$ be a $2m$-assignment showing that $G'$ is not
$(2m,m)$-choosable.  Form a $2m$-assignment for $G$ as follows.  If $w\in
V(G')$, then $L(w)=L'(w)$.  If $w\in (v\cup\{N(v)\})$, then $L(w)=L'(v')$.  If
$w\notin (V(G')\cup\{v\}\cup N(v))$,
then let $L(w)$ be an arbitrary set of $2m$ colors.  Now suppose that $G$ has an
$m$-fold $L$-coloring, $\vph$.  Note that $\vph(w)=L(v)\setminus \vph(v)$ for
every $w\in N(v)$.  Thus, by deleting $v$ and identifying its neighbors, we get
an $m$-fold $L'$-coloring of $G'$, a contradiction.  Hence, $G$ has no $m$-fold
$L$-coloring.  So $G$ is not $(2m,m)$-choosable.
The lemma follows by induction on the number of deletion/contraction operations
used to form $H$ from a subgraph of $G$.
%
%
\end{proof}

In the rest of the paper all graphs we consider are connected and bipartite.
It suffices to consider the core of $G$, so we assume
$\delta\ge2$\aside{$\delta$} (as usual,
$\delta$ denotes the minimum degree).  
Most of
our work is spent showing that if $G$ does not have one of the forms (i)--(ix)
in Conjecture~\ref{main-conj}, then $G$ is not (4,2)-choosable.  Specifically,
we will find some
subgraph of $G$ that is not (4,2)-choosable.  If $G$ has arbitrarily large
girth, then clearly we must consider some subgraph of arbitrary size to
prove that $G$ is not (4,2)-choosable (since all trees are (4,2)-choosable). 
However, this is not really a problem.  Our idea is to give the same list $L(v)$ to
each vertex $v$ in some connected subgraph $H_1$.  In any valid $L$-coloring of $G$,
every vertex of $H_1$ in one part of the bipartition must get the same colors; likewise
for every vertex of $H_1$ in the other part (and the sets of colors used on the
two parts must partition $L(v)$).  Thus, all of these vertices in one part essentially
function as a single vertex.  We also repeat this list assignment method for other
vertex-disjoint connected subgraphs $H_i$.  (This idea is formalized in the
Strong Minor Lemma, above.) As a result, all of the list
assignments we construct explicitly are for graphs with at most 10 vertices. 

We write $\B(G)$\aside{$\B(G)$} to denote the multiset of blocks of $G$ that contain
a cycle, those that are not $K_2$.  It is straightforward to show that if
$\card{\B(G)}\ge 4$, then $G$ is not (4,2)-choosable.  So most of our work is
for when $\card{\B(G)}\in\{1,2,3\}$.  When $\card{B(G)}=1$ (thus, $G$ is
2-connected, since $\delta\ge 2$), we prove a structural lemma that says that
either $G$ is a graph of the form (i)--(vi) in the conjecture, or else $G$
contains a ``bad'' subgraph.  Next we show that all bad subgraphs are, in
fact, not (4,2)-choosable.  Given a graph $G$ and a 4-assignment $L$, to prove
that $G$ has no 2-fold $L$-coloring $\vph$, we typically assume that $\vph$
exists and reach a contradiction.
This finishes the case $\card{\B(G)}=1$.  It also
helps significantly with the cases $\card{\B(G)}\in\{2,3\}$, since each block
in $\B(G)$ must be of the form (i)--(vi).

Now suppose $\card{\B(G)}=2$, and pick an arbitrary block containing a cycle 
$B_1\in \B(G)$.  We focus on
some cut-vertex $v\in V(B_1)$.  For most $G$, we construct some list assignment
$L$ and show that $G$ has no 2-fold $L$-coloring.  Our idea is to consider each
of the ${4\choose 2}$ possible ways to color $v$ from $L(v)$.  For some of
these ways we show that the coloring cannot be extended to all of $B_1$, and
for the others we show that it cannot be extended to $G\setminus B_1$.  The
only exceptions are when $G$ is of the form (vii) or (viii).  The case
$\card{\B(G)}=3$ is similar, except that now the exceptions are of the form (ix).

\subsection{Preliminaries}

Most of our other definitions are standard, but, for reference, we collect some of
them here.  By \emph{$G$ contains $H$} we mean that $H$ is a subgraph of $G$.
A \emph{$k$-vertex}\aside{$k$-vertex} is a vertex of degree $k$.
An \emph{ear decomposition}\aside{ear decomposition} of a graph $G$ is a
partition of its edges into paths
$P_0,\ldots,P_k$ such that $P_0$ is a single edge, each other $P_i$ is a path
with its endpoints in $\cup_{j=0}^{i-1}P_j$ and its internal vertices (if any)
disjoint from this subgraph.  For a connected graph $G$, a vertex $v\in V(G)$
is a \emph{cut-vertex}\aside{cut-vertex} of $G$ if $G-v$ is disconnected.  If a
connected graph has no cut-vertex, then it is
\emph{2-connected}\aside{2-connected}.  A \emph{block}\aaside{block}{0cm} of a graph
$G$ is a maximal 2-connected subgraph.  
We will use two lemmas of Whitney~\cite{whitney32} about 2-connected graphs. 
We need the second in a slightly more general form than is usually stated, so
we include a short proof.

\begin{lem}
A connected graph $G$ with at least three vertices is 2-connected if and only if
every pair of edges lie on a cycle.
\label{whitney1-lem}
\end{lem}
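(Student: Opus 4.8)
The plan is to prove both implications of this classical result of Whitney. The reverse implication is short; the forward implication is the familiar fact that, after subdividing, reduces to ``any two vertices of a 2-connected graph lie on a common cycle,'' which I would get either from Menger's theorem or from an induction consistent with the ear-decomposition language already set up above.

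For the direction ``every pair of edges lies on a cycle $\Rightarrow$ 2-connected,'' I would argue the contrapositive. Suppose $G$ has a cut-vertex $v$, so $G-v$ has (at least) two distinct components $C_1$ and $C_2$. Since $G$ is connected with at least three vertices it has no isolated vertex, so I may pick an edge $e_i$ incident to some vertex $u_i\in C_i$ for $i=1,2$; note $e_1\neq e_2$ since no edge joins $C_1$ to $C_2$, and $u_1,u_2\neq v$. If some cycle $Z$ contained both $e_1$ and $e_2$, then $u_1,u_2\in V(Z)$. If $v\notin V(Z)$ then $Z$ is a connected subgraph of $G-v$, so $u_1,u_2$ lie in the same component of $G-v$, a contradiction; if $v\in V(Z)$ then $Z-v$ is a connected path in $G-v$ still containing $u_1,u_2$, again a contradiction. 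Hence $e_1,e_2$ lie on no common cycle.

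For the direction ``2-connected $\Rightarrow$ every pair of edges lies on a cycle,'' let $e_1,e_2$ be two edges (not necessarily distinct). Form $G'$ by subdividing each of $e_1,e_2$ once, introducing new degree-2 vertices $w_1,w_2$; it is routine that subdividing an edge of a 2-connected graph yields a 2-connected graph, and $w_1,w_2$ are non-adjacent in $G'$ because every neighbor of $w_i$ is an original vertex. It now suffices to find a cycle of $G'$ through $w_1$ and $w_2$: any such cycle uses both edges incident to each $w_i$, so suppressing the two subdivision vertices turns it into a cycle of $G$ through $e_1$ and $e_2$. To get a cycle through $w_1$ and $w_2$ I would use the standard lemma that two vertices of a 2-connected graph lie on a common cycle: since $G'$ is 2-connected and $w_1,w_2$ are non-adjacent, no single vertex separates them, so by Menger's theorem there are two internally disjoint $w_1$--$w_2$ paths, whose union is the desired cycle. (Alternatively, this lemma can be proved directly by induction on $\operatorname{dist}(w_1,w_2)$, using at each step that deleting one vertex of a 2-connected graph leaves a connected graph, or by an ear-decomposition argument building the cycle up from an initial cycle through $w_1$.)

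The only genuinely non-routine ingredient is this ``two vertices on a common cycle'' lemma; everything else is bookkeeping (the contrapositive for $\Leftarrow$, and the subdivide/suppress reduction for $\Rightarrow$). If one prefers not to cite Menger's theorem, the main work is carrying out the distance-induction carefully---in particular routing the newly found path back to the already-constructed cycle and selecting the correct arc of that cycle---but this is standard and brief.
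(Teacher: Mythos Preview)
Your proof is correct. Note, however, that the paper does not actually prove this lemma: it is stated as a classical result of Whitney~\cite{whitney32} and simply cited, so there is no ``paper's own proof'' to compare against. Your argument is the standard one and would serve perfectly well if a proof were to be included.

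One small wrinkle: in the forward direction you allow $e_1=e_2$, but then ``subdividing each of $e_1,e_2$ once'' is ambiguous, and your claim that $w_1,w_2$ are non-adjacent (because every neighbor of $w_i$ is an original vertex) fails if you subdivide the same edge twice in sequence. This is harmless---if $e_1=e_2$ you only need a cycle through a single edge, which follows immediately from 2-connectedness (the endpoints are joined by a path avoiding the edge)---but it would be cleaner to dispose of this case separately at the start, or simply to assume $e_1\ne e_2$ since the equal case is trivial. Also, the non-adjacency of $w_1,w_2$ is not actually needed for the Menger step: even for adjacent vertices, two internally disjoint paths (one possibly the edge itself) still union to a cycle.
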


\begin{lem}
A graph $G$ is 2-connected if and only if $G$ has an ear decomposition.
Further, if $G$ is 2-connected and $H$ is a 2-connected subgraph of $G$, then
$G$ has an ear decomposition that begins with an ear decomposition of $H$.
\label{whitney2-lem}
\end{lem}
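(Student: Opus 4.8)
The plan is to prove the two implications separately, obtaining the ``Further'' statement as a refinement of the harder direction; both halves of the second implication rest on a single lemma that extracts an ear. \emph{From an ear decomposition to 2-connectedness.} I would induct on the number $k$ of ears. If $k=0$ then $G=P_0$ is a single edge, which has no cut-vertex; if $k=1$ then $G=P_0\cup P_1$ is a cycle ($P_1$ has length at least $2$ because $G$ is simple), which is $2$-connected. For the inductive step it suffices to verify one fact: if $G'$ is $2$-connected (or is $K_2$) and $G$ is obtained from $G'$ by adding an ear --- a new edge joining two vertices of $G'$, or a path with both ends in $V(G')$ and all internal vertices outside $V(G')$ --- then $G$ is $2$-connected. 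This is the routine check that $G-w$ is connected for every $w\in V(G)$: removing an internal vertex of the added path leaves two subpaths, each attached at its far end to the connected graph $G'$; and removing a vertex $w$ of $G'$ leaves $G'-w$ connected with the (possibly shortened) added path still attached to it.

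\emph{From 2-connectedness to an ear decomposition.} The crux is the following. \textbf{Claim (ear extraction).} If $G$ is $2$-connected and $G'$ is a proper subgraph of $G$ with $\card{V(G')}\ge 2$, then $G$ contains a path $Q$ with both endpoints in $V(G')$, with at least one edge outside $G'$, and with all internal vertices (if any) outside $V(G')$. To prove it: since $G$ is connected and $G'\subsetneq G$, either $V(G')=V(G)$, and then any edge of $E(G)\setminus E(G')$ is such a $Q$; or there is an edge $xy$ of $G$ with $x\in V(G')$ and $y\notin V(G')$, in which case $G-x$ is connected (as $G$ is $2$-connected) and hence contains a path from $y$ to $V(G')\setminus\{x\}$ (a nonempty set). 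Choosing such a path $R$ of minimum length, the minimality forces its internal vertices to avoid $V(G')$, and they (together with $y$) also avoid $x$ since $R$ lies in $G-x$; then $Q=x,xy,y,R$ has the required form.

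Granting the Claim, I would first observe that every $2$-connected graph $F$ has an ear decomposition: we may assume $\card{V(F)}\ge 3$ (the case $F=K_2$ being just $P_0=F$), so every vertex of $F$ has degree at least $2$ and hence $F$ contains a cycle $C$, which has the obvious ear decomposition; among the finitely many subgraphs of $F$ that contain $C$ and possess an ear decomposition, pick one, $F'$, with as many edges as possible; if $F'\ne F$, the Claim (applied with $F$ in the role of $G$ and $F'$ in the role of $G'$) yields an ear $Q$ whose edges all lie outside $F'$, and appending $Q$ as the next path gives an ear decomposition of $F'\cup Q\supsetneq F'$, contradicting maximality. Hence $F'=F$. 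For the refinement, let $H$ be a $2$-connected subgraph of $G$ and fix an ear decomposition $Q_0,\dots,Q_m$ of $H$ (which exists by the previous sentence). Among subgraphs of $G$ that contain $H$ and have an ear decomposition beginning with $Q_0,\dots,Q_m$, pick one, $G'$, with as many edges as possible; if $G'\ne G$, the Claim supplies an ear $Q$ of $G$ relative to $G'$, and appending it yields a larger such subgraph --- a contradiction. So $G'=G$, and its ear decomposition begins with one of $H$.

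The single delicate point is the ear-extraction Claim: it is precisely the minimality of $R$ that keeps its internal vertices out of $V(G')$, so that $Q$ is a legal ear and can be appended to an ear decomposition; and one must remember the degenerate case $V(G')=V(G)$, where the new ear is simply a chord. Everything else --- the two small base cases of the easy direction, and the remark that appending a legal ear to a valid ear decomposition produces a valid ear decomposition (the new edges all being fresh) --- is bookkeeping.
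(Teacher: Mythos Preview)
Your argument is correct. The one small thing worth tightening is the passage from your Claim to ``an ear $Q$ whose edges all lie outside $F'$'': your Claim literally promises only \emph{one} edge outside, but of course any edge of $Q$ either has an internal vertex of $Q$ as an endpoint (hence lies outside $E(G')$) or, in the degenerate single-edge case, is the one edge you chose from $E(G)\setminus E(G')$. You clearly know this; it would read more smoothly if the Claim simply asserted $E(Q)\cap E(G')=\emptyset$.

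Your route differs from the paper's in the ear-extraction step. The paper invokes Lemma~\ref{whitney1-lem} (any two edges of a $2$-connected graph lie on a common cycle): it picks $e_1\in E(H)$ and $e_2\in E(G)\setminus E(H)$, takes a cycle $D$ through both, and lets the new ear be a shortest subpath of $D$ containing $e_2$ with both ends in $H$. You instead use the defining property of $2$-connectedness directly, finding an edge $xy$ with $x\in V(G')$, $y\notin V(G')$, and then a shortest path in $G-x$ from $y$ back to $V(G')$. Your approach is self-contained and avoids appealing to Lemma~\ref{whitney1-lem}; the paper's is a touch slicker once that lemma is in hand, since the cycle $D$ hands you the ear almost for free. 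You also supply the easy converse (ear decomposition $\Rightarrow$ $2$-connected), which the paper omits.
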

\begin{proof}
Let $C$ be an arbitrary cycle in $G$.  Clearly $C$ has an ear decomposition.
Now suppose $H$ is a proper subgraph of $G$, and  $P_0, \ldots, P_k$ is an ear
decomposition of $H$.  Pick $e_1\in E(H)$ and $e_2\in E(G)\setminus E(H)$.
By Lemma~\ref{whitney1-lem}, some cycle $D$ contains $e_1$ and
$e_2$.  Let $P_{k+1}$ be a shortest path along $D$ that contains $e_2$ and has
both endpoints in $H$.  Now $P_0,\ldots,P_k,P_{k+1}$ is an ear
decomposition for a larger subgraph of $G$.  By induction on
$\card{E(G)\setminus E(H)}$, we can extend the ear decomposition to all of $G$.
This proves the first statement.  To prove the second, begin with an
ear decomposition of $H$, and extend it to an ear decomposition of $G$, as in
the proof above.
\end{proof}

Our next proposition is essentially from~\cite{ERT} (and appeared explicitly
in~\cite{TV}).  For completeness, we include the proof. 

\begin{prop}
\label{prop1}
For every graph $G$ and every positive integer $m$, graph $G$ is
$(2m,m)$-choosable if and only if $\core(G)$ is $(2m,m)$-choosable.
\end{prop}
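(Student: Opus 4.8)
The plan is to prove both implications by reducing to the case of a single pendant vertex. For the forward direction, I would first record the elementary fact that $(2m,m)$-choosability is inherited by subgraphs: if $G$ is $(2m,m)$-choosable and $H\subseteq G$, then given any $2m$-assignment $L$ on $H$, extend $L$ arbitrarily to a $2m$-assignment on $G$, take an $m$-fold $L$-coloring of $G$, and restrict it to $H$. Since $\core(G)$ is by definition a subgraph of $G$, this immediately gives that if $G$ is $(2m,m)$-choosable then so is $\core(G)$.

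For the reverse direction, I would induct on $\card{V(G)}-\card{V(\core(G))}$. When this quantity is $0$ we have $G=\core(G)$ and there is nothing to prove. Otherwise $G\ne\core(G)$, so (recalling that the core is obtained by repeatedly deleting degree-$1$ vertices) $G$ has a vertex $v$ of degree $1$; let $u$ be its unique neighbor. The graph $G-v$ is connected and has the same core as $G$, so by the inductive hypothesis $G-v$ is $(2m,m)$-choosable. Now let $L$ be any $2m$-assignment on $G$. Its restriction to $G-v$ is a $2m$-assignment, so there is an $m$-fold $L$-coloring $\vph$ of $G-v$. Since $\card{L(v)}=2m$ and $\card{\vph(u)}=m$, the set $L(v)\setminus\vph(u)$ has at least $m$ elements; letting $\vph(v)$ be any $m$-subset of it extends $\vph$ to an $m$-fold $L$-coloring of $G$. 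Hence $G$ is $(2m,m)$-choosable, completing the induction.

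The only point that needs care — and the step I expect a careful reader to scrutinize — is the bookkeeping in the induction: that $G\ne\core(G)$ forces a degree-$1$ vertex, that deleting such a vertex leaves a connected graph with the same core, and the (trivial) base case, including the degenerate situation $\core(G)=K_1$, which is already $(2m,m)$-choosable. The coloring estimate itself is immediate from $2m-m\ge m$, which is precisely the slack that makes a single pendant vertex always extendable, so no real obstacle arises there.
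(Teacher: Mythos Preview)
Your proof is correct and is essentially identical to the paper's: the forward direction uses that $\core(G)\subseteq G$, and the reverse direction inducts on $\card{V(G)}-\card{V(\core(G))}$, peeling off a degree-$1$ vertex $v$ and extending via $\card{L(v)\setminus\vph(u)}\ge 2m-m=m$. You are slightly more explicit than the paper in spelling out the subgraph-inheritance argument and the fact that $G-v$ has the same core, but these are just elaborations of the same proof.
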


\begin{proof}
We assume $G$ is connected.
One direction is trivial, since $\core(G)\subseteq G$.  For the other, suppose
that $\core(G)$ is $(2m,m)$-choosable.  Let $t=\card{V(G)\setminus V(\core(G))}$.
Let $L$ be a $2m$-assignment for $G$.  We show that $G$ has an $m$-fold
$L$-coloring, by induction on $t$.  The case $t=0$ holds since $\core(G)$
is $(2m,m)$-choosable, by hypothesis.  When $t\ge 1$, $G$ has a 1-vertex, $v$;
let $w$ be the neighbor of $v$.  By induction, $G-v$ has an $m$-fold
$L$-coloring, $\vph$.  Now $\card{L(v)\setminus \vph(w)}\ge 2m-m=m$, so we can
extend $\vph$ to $v$.
%
%
\end{proof}

\section{$\card{\B(G)}=1$}
\label{BG1-sec}
Let
$\Ggood=\{C_{2s},\theta_{2,2s,2t},\theta_{1,2s+1,2t+1},\theta_{2,2,2,2}\}$\aside{$\Ggood$},
where $s$ and $t$ range over all positive integers.
Let $\Gbad=\{\theta_{3,3,3},\theta_{2,2,2,4},K_{3,3},K_{2,5},
Q_3-v\}$\aside{$\Gbad$}, where $Q_3$ denotes the 3-dimensional cube (and $v$ is
an arbitrary vertex, since $Q_3$ is vertex transitive).
Every graph in $\Ggood$ is known to be (4,2)-choosable (we give specific
references at the end of Section~\ref{BG3-sec}).  Later in this section
we show that every graph $G$ is not (4,2)-choosable if either $G\in \Gbad$ or
$G$ contains two cycles that intersect in at most one vertex.  (The graphs
$\theta_{3,3,3}$ and $\theta_{2,2,2,4}$ were shown to not be (4,2)-choosable by
Meng, Puleo, and Zhu, in Section~6 of~\cite{MPZ}; all other graphs in $\Gbad$
and $\Gcycles$ are shown in Figures~\ref{figS}--\ref{figR}.) To conclude this
section, we will determine which strong subdivisions of $K_{3,3}-e$ are
(4,2)-choosable.  Thus, our next lemma plays a central role in our proof of
Conjecture~\ref{main-conj}.

\begin{lem}
\label{lem-struct}
Let $G$ be 2-connected and bipartite.  Either 
(i) $G\in \Ggood$, 
(ii) $G$ contains two cycles that intersect in at most one vertex, 
(iii) $G$ contains a strong subdivision of a graph in $\Gbad$, or 
(iv) $G$ is a strong subdivision of $\Gmixed$.
\end{lem}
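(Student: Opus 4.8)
The plan is to run an ear-decomposition argument on $G$, tracking how the "core structure" grows. Since $G$ is $2$-connected, by Lemma~\ref{whitney2-lem} it has an ear decomposition $P_0,\ldots,P_k$, and we may assume $P_0\cup P_1$ is a cycle $C$. If $k=1$, then $G=C_{2s}$ (it is bipartite), so (i) holds; thus assume $k\ge 2$. The first goal is to understand the smallest possible "branch structure": I would let $G'$ be the subgraph obtained from $G$ by suppressing all degree-$2$ vertices (i.e., $G$ is a subdivision of $G'$), and note that $G'$ is $2$-connected, bipartite (every subdivision of a bipartite multigraph with all cycles even is fine, but here one must be careful: $G'$ may have multi-edges, e.g. the theta graphs correspond to $G'$ a triple edge). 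So really I would work with the underlying multigraph $H$ of branch-vertices and branch-paths, and the statement becomes: classify which $2$-connected multigraphs $H$ with $\le $ some bounded size can arise, then check parity constraints.

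The key case split is on $\Delta(G')$ and on the number of branch vertices. First, if $G$ has at most two branch vertices, then $H$ is a "bouquet of parallel edges" between two vertices, i.e. $G$ is a generalized theta graph $\theta_{a_1,\ldots,a_r}$; here I would argue: if $r\ge 4$, then $\theta_{a_1,a_2,a_3,a_4}$ contains $\theta_{2,2,2,4}$ or two disjoint cycles — more precisely, since $G$ is bipartite all $a_i+a_j$ are even so all $a_i$ have the same parity; if all are even then either $G=\theta_{2,2,2,2}\in\Ggood$ or $G$ contains a strong subdivision of $\theta_{2,2,2,4}\in\Gbad$, and if all are odd (so $r$ is... any) then a short argument shows $G$ contains a strong subdivision of $\theta_{3,3,3}\in\Gbad$ when $r\ge 3$, while $r=3$ with parities mixed is impossible; $r=3$ all even gives $\theta_{2,2s,2t}\in\Ggood$, $r=3$ all odd gives $\theta_{1,2s+1,2t+1}\in\Ggood$ (the shortest must be realizable as length $1$ or else strong-subdivides $\theta_{3,3,3}$). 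The second main case is when $G$ has at least three branch vertices, equivalently $H$ has at least two vertices of degree $\ge 3$; here, since we seek two cycles meeting in $\le 1$ vertex (option (ii)), I would show that if any two branch vertices are joined by at least three internally-disjoint paths then we get either (ii) or a $K_{2,4}$-subdivision hiding inside; and if $H$ (made simple by removing parallel edges, keeping track) has a vertex of degree $\ge 4$, we find a $K_{2,4}$, which strong-subdivides to ... wait, $K_{2,4}$ is in $\Ggood$? No — check: $K_{2,4}$ is option (v) of the conjecture and is $(4,2)$-choosable, but it is not literally in $\Ggood$; however $K_{2,5}\in\Gbad$, so a degree-$\ge 5$ branch vertex on the size-$2$ side gives a strong subdivision of $K_{2,5}$. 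So the genuinely delicate zone is: $H$ simple, all branch degrees in $\{3,4\}$, size-$2$-side degree $\le 4$ — and this is where $K_{3,3}-e$, $K_{3,3}$, and $Q_3-v$ live.

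The main obstacle — and where I expect to spend most of the effort — is this bounded-size combinatorial classification: showing that a $2$-connected bipartite graph whose suppression $H$ has minimum degree $\ge 3$, is not one of the $\Ggood$ theta/$K_{2,4}$ types, and does not yield two nearly-disjoint cycles, must contain a strong subdivision of one of $K_{3,3}$, $Q_3-v$ (or be exactly a strong subdivision of $K_{3,3}-e$). The approach: since $|E(H)|-|V(H)|+1\ge 2$ branch structure forces enough edges, and all branch vertices have degree $\ge 3$, $H$ has at most $|E(H)|\le 3|V(H)|/2$ but also (bipartite, few vertices once (ii) is excluded) only finitely many candidates — I would enumerate $2$-connected bipartite simple graphs with $\delta\ge 3$ and few vertices ($K_{3,3}$, $Q_3$, $K_{3,3}-e$, $K_{2,\ge3}$ with the $2$-side suppressed being the theta case) and argue every larger one either has a vertex-subset giving two cycles sharing $\le 1$ vertex, or contains one of these. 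Then a parity check on path lengths converts "contains $H_0$ as a topological minor" into "contains a strong subdivision of $H_0$" exactly when the forced path-length parities are compatible, and when they are not, a shorter rerouting through a different minor (landing in $\Gbad$) handles it — for instance a strong subdivision of $K_{3,3}-e$ with an odd subdivided edge on a degree-$2$ vertex is what option (vi) of the conjecture isolates, and anything else strong-subdivides $K_{3,3}$. I would structure the write-up as: (1) reduce to the suppressed multigraph $H$; (2) theta case ($|V(H)|=2$); (3) find two branch vertices with $3$ disjoint paths or conclude (ii); (4) bounded enumeration of the remaining $H$'s; (5) parity bookkeeping to land each surviving case in (i), (iii), or (iv).
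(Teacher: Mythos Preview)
Your plan diverges from the paper's route. The paper does not suppress to a branch multigraph and enumerate; instead it picks a single $\theta$-subgraph $H$ (preferring one with all three paths of odd length, if one exists), and splits into Case~1 (all three paths odd) and Case~2 (all three paths even, and by the preference no odd-path $\theta$ exists in $G$). In each case it then adds at most two further ears $P_3,P_4$ (and in one subcase of Case~2 a $P_5$), and for every possible placement of the new ear's endpoints it argues directly, using only parity and elementary disjointness, that one of (i)--(iv) is forced. The whole argument fits on about a page. Your suppression-then-enumerate strategy is a legitimate alternative and would ultimately work, but the enumeration you flag as the ``main obstacle'' is indeed where all the content lies, and your outline leaves it essentially open.

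Two concrete gaps worth naming. First, the assertion that a branch vertex of degree $\ge 5$ yields a strong subdivision of $K_{2,5}$ is unjustified: a single high-degree vertex gives only a $K_{1,5}$, and you still need a second vertex meeting five internally disjoint paths --- excluding (ii) does not obviously hand you one. Second, you claim the suppressed multigraph $H$ has only ``finitely many candidates'' once (ii) is ruled out, but you never say why excluding two cycles with $\le 1$ common vertex bounds $|V(H)|$ or $|E(H)|$; graphs without two vertex-disjoint cycles do have a known structure theorem, but that is itself a nontrivial result you would have to invoke and then adapt to the stronger ``$\le 1$ common vertex'' hypothesis. The paper's ear-by-ear method sidesteps both problems: since it only ever examines one new ear against a fixed small subgraph, the case analysis stays local and no global bound on $H$ is required. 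If you want to pursue your route, the cleanest fix is probably to mimic the paper after all --- once you have a $\theta$-subgraph, argue ear-by-ear rather than trying to classify the full suppression.
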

\begin{proof}
Suppose the lemma is false, and let $G$ be a counterexample.  Since $G$ is
2-connected, Lemma~\ref{whitney1-lem} implies that $G$ contains some
$\theta$-graph $H$; if possible, pick $H$ to have its three paths of odd
lengths.  For a path $P$, we write \Emph{$\Int(P)$} to denote the set of
interior vertices of $P$, excluding the endpoints.

\textbf{Case 1: Each path of $H$ has odd length.}
Say $H=\theta_{a,b,c}$, with $a\le b\le c$.  Since $G$ contains no strong
subdivision of $\theta_{3,3,3}$, we have $a=1$.  Let $v$ and $w$ denote the
3-vertices in $H$, and let $P_1$ and $P_2$ denote the other two
$v,w$-paths in $H$.  Since $G$ is 2-connected, Lemma~\ref{whitney2-lem} implies
that $G$ has an ear decomposition that
begins with $vw,P_1,P_2$.  Since $G\notin \Ggood$, the ear decomposition
continues with some path $P_3$.  Let $x_3$ and $y_3$ denote the endpoints of
$P_3$.  If $\{x_3,y_3\}=\{v,w\}$, then $G$ contains a strong subdivision of
$\theta_{3,3,3}$, a contradiction.  If $\card{\{x_3,y_3\}\cap \{v,w\}}=1$, then
$G$ contains two cycles intersecting in exactly one vertex, a contradiction.  If
$x_3,y_3\in V(P_1)$, then $G$ contains two vertex disjoint cycles (one in
$P_1\cup P_3$ and one in $P_2+vw$), a contradiction.  So, by symmetry between
$P_1$ and $P_2$, we assume $x_3\in \Int(P_1)$ and $y_3\in \Int(P_2)$.  

If $x_3$
and $y_3$ are in the same part of the bipartition, then $G$ contains a strong
subdivision of $Q_3-v$, a contradiction.  So assume that $x_3$ and $y_3$ are in
opposite parts.  If $P_3$ is the last ear in the decomposition, then $G$ is a
strong subdivision of $K_{3,3}-e$, a contradiction.  So assume there exists another
ear in the decomposition, $P_4$; call its endpoints $x_4$ and $y_4$.  If
$x_4,y_4\in V(P_3)$, then $G$ contains two cycles intersecting in at most one
vertex (one in $P_3\cup P_4$ and one in, say, $P_1+vw$), a contradiction.
Suppose $\card{\{x_4,y_4\}\cap V(P_3)}=1$; say $x_4\in V(P_3)$ and, by symmetry,
$y_4\in V(P_1)$.  Again, $G$ contains two cycles intersecting in at most one
vertex (one in $P_1\cup P_3\cup P_4$ and one in $P_2+vw$), a contradiction.
So $x_4,y_4\notin V(P_3)$.  By the same argument as for $P_3$, we can assume
that $x_4\in \Int(P_2)$ and $y_4\in \Int(P_1)$.  Further, when we walk along
$P_1\cup P_2$, vertices $x_3$ and $y_3$ must alternate with vertices $x_4$ and
$y_4$ (since otherwise $G$ contains vertex disjoint cycles, a contradiction).
So we may assume the vertices appear in the order $x_3,v,x_4, y_3, w, y_4$.

Suppose that $x_3$ and $x_4$ are both in the opposite part of the bipartition
from $v$.  This implies that also $y_3$ and $y_4$ are in the opposite part from
$w$.  Thus, $G$ contains a strong subdivision of $K_{3,3}$, a contradiction (the
branch vertices are $v,w,x_3,x_4,y_3,y_4$).  So at least one of $x_3$ and $y_3$
is in the same part of the bipartition as $v$.  Now $G$ must contain a strong
subdivision of $Q_3-v$; there are two possibilities, both shown in
Figure~\ref{Case1-fig}.
This completes Case 1.

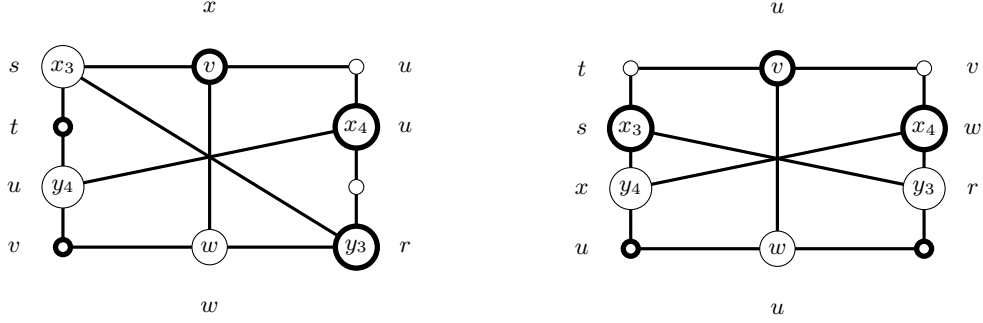
\begin{figure}[t]
\centering
\begin{minipage}[b]{0.45\linewidth}
\centering
\begin{tikzpicture}[xscale = 13, yscale=16]
\Vertex[style = unlabeledStyle2, x = 0.650, y = 0.800, L = \footnotesize {$v$}]{v0}
\Vertex[style = unlabeledStyle, x = 0.500, y = 0.800, L = \footnotesize {$x_3$}]{v1}
\Vertex[style = unlabeledStyle, x = 0.800, y = 0.800, L = \footnotesize {}]{v2}
\Vertex[style = unlabeledStyle2, x = 0.800, y = 0.750, L = \footnotesize {$x_4$}]{v3}
\Vertex[style = unlabeledStyle, x = 0.800, y = 0.700, L = \footnotesize {}]{v4}
\Vertex[style = unlabeledStyle2, x = 0.800, y = 0.650, L = \footnotesize {$y_3$}]{v5}
\Vertex[style = unlabeledStyle, x = 0.650, y = 0.650, L = \footnotesize {$w$}]{v6}
\Vertex[style = unlabeledStyle2, x = 0.500, y = 0.650, L = \footnotesize {}]{v7}
\Vertex[style = unlabeledStyle, x = 0.500, y = 0.700, L = \footnotesize {$y_4$}]{v8}
\Vertex[style = unlabeledStyle2, x = 0.500, y = 0.750, L = \footnotesize {}]{v9}
\Vertex[style = labeledStyle, x = 0.650, y = 0.850, L = \footnotesize {$x$}]{v10}
\Vertex[style = labeledStyle, x = 0.850, y = 0.800, L = \footnotesize {$u$}]{v11}
\Vertex[style = labeledStyle, x = 0.850, y = 0.750, L = \footnotesize {$u$}]{v12}
\Vertex[style = labeledStyle, x = 0.450, y = 0.700, L = \footnotesize {$u$}]{v13}
\Vertex[style = labeledStyle, x = 0.450, y = 0.800, L = \footnotesize {$s$}]{v14}
\Vertex[style = labeledStyle, x = 0.450, y = 0.750, L = \footnotesize {$t$}]{v15}
\Vertex[style = labeledStyle, x = 0.650, y = 0.600, L = \footnotesize {$w$}]{v16}
\Vertex[style = labeledStyle, x = 0.850, y = 0.650, L = \footnotesize {$r$}]{v17}
\Vertex[style = labeledStyle, x = 0.450, y = 0.650, L = \footnotesize {$v$}]{v18}
\Edge[](v1)(v0)
\Edge[](v2)(v0)
\Edge[](v1)(v9)
\Edge[](v8)(v9)
\Edge[](v6)(v7)
\Edge[](v8)(v7)
\Edge[](v4)(v5)
\Edge[](v6)(v5)
\Edge[](v2)(v3)
\Edge[](v4)(v3)
\Edge[](v0)(v6)
\Edge[](v3)(v8)
\Edge[](v1)(v5)
\end{tikzpicture}
\end{minipage}
\begin{minipage}[b]{0.45\linewidth}
\centering
\begin{tikzpicture}[xscale = 13, yscale=16]
\Vertex[style = unlabeledStyle2, x = 0.650, y = 0.800, L = \footnotesize {$v$}]{v0}
\Vertex[style = unlabeledStyle, x = 0.500, y = 0.800, L = \footnotesize {}]{v1}
\Vertex[style = unlabeledStyle, x = 0.800, y = 0.800, L = \footnotesize {}]{v2}
\Vertex[style = unlabeledStyle2, x = 0.800, y = 0.750, L = \footnotesize {$x_4$}]{v3}
\Vertex[style = unlabeledStyle, x = 0.800, y = 0.700, L = \footnotesize {$y_3$}]{v4}
\Vertex[style = unlabeledStyle2, x = 0.800, y = 0.650, L = \footnotesize {}]{v5}
\Vertex[style = unlabeledStyle, x = 0.650, y = 0.650, L = \footnotesize {$w$}]{v6}
\Vertex[style = unlabeledStyle2, x = 0.500, y = 0.650, L = \footnotesize {}]{v7}
\Vertex[style = unlabeledStyle, x = 0.500, y = 0.700, L = \footnotesize {$y_4$}]{v8}
\Vertex[style = unlabeledStyle2, x = 0.500, y = 0.750, L = \footnotesize {$x_3$}]{v9}
\Vertex[style = labeledStyle, x = 0.450, y = 0.700, L = \footnotesize {$x$}]{v10}
\Vertex[style = labeledStyle, x = 0.450, y = 0.650, L = \footnotesize {$u$}]{v11}
\Vertex[style = labeledStyle, x = 0.650, y = 0.850, L = \footnotesize {$u$}]{v12}
\Vertex[style = labeledStyle, x = 0.450, y = 0.750, L = \footnotesize {$s$}]{v13}
\Vertex[style = labeledStyle, x = 0.450, y = 0.800, L = \footnotesize {$t$}]{v14}
\Vertex[style = labeledStyle, x = 0.650, y = 0.600, L = \footnotesize {$u$}]{v15}
\Vertex[style = labeledStyle, x = 0.850, y = 0.700, L = \footnotesize {$r$}]{v16}
\Vertex[style = labeledStyle, x = 0.850, y = 0.800, L = \footnotesize {$v$}]{v17}
\Vertex[style = labeledStyle, x = 0.850, y = 0.750, L = \footnotesize {$w$}]{v18}
\Edge[](v0)(v6)
\Edge[](v1)(v0)
\Edge[](v1)(v9)
\Edge[](v2)(v0)
\Edge[](v2)(v3)
\Edge[](v3)(v8)
\Edge[](v4)(v3)
\Edge[](v4)(v5)
\Edge[](v6)(v5)
\Edge[](v6)(v7)
\Edge[](v8)(v7)
\Edge[](v8)(v9)
\Edge[](v9)(v4)
\end{tikzpicture}
\end{minipage}
\caption{Two strong subdivisions of $Q_3-v$ that arise in Case~1 of the proof of
Lemma~\ref{lem-struct}.  
The boldness of the vertex indicates
its part in the bipartition. The label outside of each vertex indicates its
preimage in $Q_3-v$, as shown in Figure~\ref{figQ}.\label{Case1-fig}}
\end{figure}

\begin{figure}[!b]
\centering
\begin{tikzpicture}[scale = 13]
\Vertex[style = unlabeledStyle2, x = 0.600, y = 0.700, L = \footnotesize {$y_4$}]{v0}
\Vertex[style = unlabeledStyle2, x = 0.600, y = 0.900, L = \footnotesize {$v$}]{v1}
\Vertex[style = unlabeledStyle, x = 0.600, y = 0.800, L = \footnotesize {}]{v2}
\Vertex[style = unlabeledStyle, x = 0.600, y = 0.600, L = \footnotesize {}]{v3}
\Vertex[style = unlabeledStyle2, x = 0.600, y = 0.500, L = \footnotesize {$w$}]{v4}
\Vertex[style = unlabeledStyle, x = 0.500, y = 0.700, L = \footnotesize {}]{v5}
\Vertex[style = unlabeledStyle2, x = 0.400, y = 0.700, L = \footnotesize {$x_4$}]{v6}
\Vertex[style = unlabeledStyle, x = 0.500, y = 0.800, L = \footnotesize {}]{v7}
\Vertex[style = unlabeledStyle, x = 0.800, y = 0.700, L = \footnotesize {}]{v8}
\Vertex[style = unlabeledStyle, x = 0.500, y = 0.600, L = \footnotesize {}]{v9}
\Vertex[style = labeledStyle, x = 0.600, y = 0.950, L = \footnotesize {$u$}]{v10}
\Vertex[style = labeledStyle, x = 0.465, y = 0.835, L = \footnotesize {$t$}]{v11}
\Vertex[style = labeledStyle, x = 0.350, y = 0.700, L = \footnotesize {$s$}]{v12}
\Vertex[style = labeledStyle, x = 0.465, y = 0.565, L = \footnotesize {$r$}]{v13}
\Vertex[style = labeledStyle, x = 0.600, y = 0.450, L = \footnotesize {$w$}]{v14}
\Vertex[style = labeledStyle, x = 0.840, y = 0.700, L = \footnotesize {$v$}]{v15}
\Vertex[style = labeledStyle, x = 0.665, y = 0.700, L = \footnotesize {$x$}]{v16}
\draw[rounded corners=10pt] (.64, .7) -- (.64, .83) -- (.57, .83) -- (.57, .73)
-- (.47, .73) -- (.47, .67) -- (.57, .67) -- (.57, .57) -- (.64, .57) -- (.64, .7);
\Edge[](v0)(v2)
\Edge[](v1)(v2)
\Edge[](v1)(v7)
\Edge[](v6)(v7)
\Edge[](v0)(v5)
\Edge[](v6)(v5)
\Edge[](v4)(v9)
\Edge[](v6)(v9)
\Edge[](v0)(v3)
\Edge[](v4)(v3)
\Edge[](v1)(v8)
\Edge[](v4)(v8)
\end{tikzpicture}
\caption{A strong subdivision of $Q_3-v$ arising in Case~2 of the proof of
Lemma~\ref{lem-struct}.  The boldness of the vertex indicates
its part in the bipartition. The label outside of each vertex (or set of
vertices) indicates its
preimage in $Q_3-v$, as shown in Figure~\ref{figQ}.\label{Case2-fig}}
\end{figure}
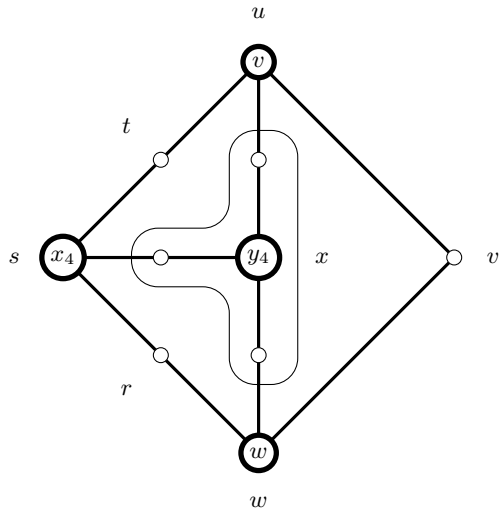

\textbf{Case 2: $G$ has a $\theta$-subgraph with paths of even lengths (but none
with paths of odd lengths).}
Let $P_1$, $P_2$, $P_3$ be the paths of the $\theta$-subgraph, and let $v$ and
$w$ denote their common endpoints.  Suppose that $G$ contains a fourth
vertex disjoint $v,w$-path, $P_4$.  
Consider an ear decomposition of $G$ beginning with $P_1,\ldots,P_4$.
Suppose it continues with some ear $P_5$, and let $x_5,y_5$ denote its
endpoints.  By symmetry, we assume that either (i) $\{x_5,y_5\}=\{v,w\}$, (ii)
$x_5\in \{v,w\}$ and $y_5\in \Int(P_1)$, (iii) $x_5,y_5\in \Int(P_1)$, or (iv)
$x_5\in \Int(P_1)$ and $y_5\in \Int(P_2)$.  In (i), $G$ has a strong subdivision
of $K_{2,5}$, so we are done.  In each of (ii), (iii), and (iv), $G$ has two
cycles that intersect in at most one vertex, so again we are done.  So we
assume no such $P_5$ exists.  Thus, either $G=\theta_{2,2,2,2}$ or $G$ is a
strong subdivision of $\theta_{2,2,2,4}$; in each case we are done.

So assume instead that $G$ has no further $v,w$-path.  Since $G\notin \Ggood$,
the ear decomposition must have a fourth ear, $P_4$; again, denote its endpoints
by $x_4$ and $y_4$.  If $\{x_4,y_4\}\cap\{v,w\}\ne \emptyset$, then $G$ contains
two cycles intersecting in at most one vertex, so we are done.  Similarly, if
$\{x_4,y_4\}\in \Int(P_1)$, then $G$ contains two disjoint cycles; again we are
done.  So, by symmetry, we assume that $x_4\in \Int(P_1)$ and $y_4\in
\Int(P_2)$.  Since, we are not in Case 1, vertices $x_4$ and $y_4$ must be in
the same part of the bipartition.  Similarly, vertices $x_4$ and $y_4$ are in
the same part of the bipartition as vertices $v$ and $w$.  Now $G$ contains a
strong subdivision of $Q_3-v$, a contradiction (see Figure~\ref{figQ}).
\end{proof}

Let $\Gcycles$\aside{$\Gcycles$} denote the set of five graphs shown in
Figures~\ref{figS}, \ref{figT}, \ref{figU}, \ref{figP}, and \ref{figR}.

\begin{lem}
If a graph $G$ is 2-connected, bipartite, and contains two cycles that
intersect in at most one vertex, then $G$ contains a strong minor of some graph
in $\Gcycles$.
\label{lem-cycles-minor}
\end{lem}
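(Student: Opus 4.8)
The plan is to use the $2$-connectivity of $G$ to enlarge $C_1\cup C_2$ to a subgraph $H$ of bounded shape --- a subdivision of one of two explicit small base graphs --- and then to reduce $H$ by the strong-subdivision operation until it coincides with, or has as a strong minor, a member of $\Gcycles$.

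First fix cycles $C_1,C_2$ in $G$ with $\card{V(C_1)\cap V(C_2)}\le 1$, and split into two cases. \textbf{Case A:} $V(C_1)\cap V(C_2)=\{v\}$. Since $G$ is $2$-connected, $G-v$ is connected, so some path $P$ of $G-v$ joins a vertex $a\in V(C_1)\setminus\{v\}$ to a vertex $b\in V(C_2)\setminus\{v\}$; taking $P$ shortest makes it internally disjoint from $C_1\cup C_2$, and $a\ne b$ since otherwise $a\in V(C_1)\cap V(C_2)=\{v\}$. One checks that $H\DefinedAs C_1\cup C_2\cup P$ then has branch vertices exactly $v$ (degree $4$), $a$, $b$ (degree $3$), so it is a subdivision of the base graph with vertex set $\{v,a,b\}$ in which $v$ and $a$ are joined by two arcs (the two arcs of $C_1$), $v$ and $b$ by two arcs (the two arcs of $C_2$), and $a$ and $b$ by $P$. \textbf{Case B:} $C_1$ and $C_2$ are vertex-disjoint. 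Since $G$ is $2$-connected there are two vertex-disjoint $V(C_1)$--$V(C_2)$ paths $Q_1,Q_2$ in $G$ (add a vertex adjacent to all of $V(C_1)$ and another adjacent to all of $V(C_2)$; the result is still $2$-connected, so Menger's theorem gives two internally disjoint paths between the new vertices, whose middle portions give $Q_1,Q_2$); taking each $Q_i$ shortest makes them internally disjoint from $C_1\cup C_2$. If $a_i,b_i$ denote the ends of $Q_i$ on $C_1,C_2$, disjointness of $Q_1,Q_2$ forces $a_1\ne a_2$ and $b_1\ne b_2$, so $H\DefinedAs C_1\cup C_2\cup Q_1\cup Q_2$ is a subdivision of the base graph with vertex set $\{a_1,a_2,b_1,b_2\}$, a double edge $a_1a_2$ and a double edge $b_1b_2$ (the arcs of $C_1$ and of $C_2$), and single edges $a_1b_1$ and $a_2b_2$.

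In both cases $H$ is bipartite, so all its cycles are even; this pins down the parities of its arc- and path-lengths (the two arcs between a given pair of branch vertices have equal parity, and each connecting path's parity is then forced). Next I would repeatedly shorten arcs of $H$ by $2$ via the strong-subdivision reduction, keeping $H$ simple, until every arc reaches its minimum length --- which is $1$ or $2$ according to parity, except that two parallel arcs cannot both become length $1$. A strong subdivision is in particular a strong minor, and further strong-minor contractions may be applied afterward, so it remains to run a finite case analysis: enumerate the admissible parity patterns, write down each minimized graph, and confirm that each one either lies in $\Gcycles$ or has a member of $\Gcycles$ as a strong minor. (For instance, the "all arcs even" minimization of the Case~B base graph, after one deletes a single subdivision vertex and identifies its two neighbors, becomes two $4$-cycles sharing a single vertex joined by a path of length $2$ --- one of the Case~A minimizations.)

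I expect this last bookkeeping step to be the only real difficulty. The two base graphs have nontrivial automorphisms and doubled edges, so a naive enumeration produces more than five minimized graphs; one must verify carefully that the surplus configurations collapse --- by relabeling, or by further strong-minor contractions --- onto exactly the five graphs of $\Gcycles$, so that the list is neither redundant nor incomplete, and in particular that each of the five is genuinely needed (e.g.\ because the subgraph of $G$ produced in Case~A has only three branch vertices, so it cannot carry a strong minor of any four-branch-vertex graph). By contrast, the structural half of the argument --- producing $H$ from $G$ --- is routine, using only the elementary connectivity facts above.
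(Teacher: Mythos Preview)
Your proposal is correct and follows essentially the same approach as the paper: split on whether the two cycles share a vertex or are disjoint, use $2$-connectivity (Menger) to find one or two connecting paths, and then do a parity case analysis to land in $\Gcycles$. The paper organizes the final bookkeeping slightly more efficiently---it phrases the cases in terms of which bipartition class the connecting-path endpoints lie in (relative to $v$), and in the disjoint-cycle case it first contracts any even-length connecting path to reduce immediately to Case~A, leaving only two subcases when both connecting paths are odd---but the content is the same as your arc-parity enumeration.
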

\begin{proof}
Let $G$ be 2-connected. Suppose $G$ has two cycles, $C_1$ and
$C_2$, that intersect in a single vertex, $v$.  By Menger's Theorem there exists
a path $P$ from $C_1$ to $C_2$ that has its internal vertices disjoint from
$C_1$ and $C_2$.  Let $w_1$ and $w_2$ denote the endpoints of $P$ on $C_1$ and
$C_2$.  Now $G$ has as a strong minor the graph in Figure~\ref{figS},
\ref{figP}, or~\ref{figR}, depending on whether 0, 1, or 2 of $w_1$ and $w_2$
lie in the same part of the bipartition as $v$.  

Suppose instead that $G$ has vertex disjoint cycles, $C_1$ and $C_2$.  By
Menger's Theorem, $G$ has disjoint paths $P_1$ and $P_2$ from $C_1$ to $C_2$
(with their internal vertices disjoint from $C_1$ and $C_2$).
Let $v_1$ and $w_1$ denote the endpoints on $C_1$ of $P_1$ and
$P_2$, and let $v_2$ and $w_2$ denote the endpoints on $C_2$ of $P_1$ and $P_2$.
If either of $P_1$ and $P_2$ has even length, then we can contract it to reach
the case handled above, since $G$ has as a strong minor two cycles intersecting in
a single vertex.
So we instead assume that both $P_1$ and $P_2$ have odd length.  
Now $G$ has as a strong minor either Figure~\ref{figT} or Figure~\ref{figU},
depending on whether or not $v_1$ and $v_2$ lie in the same part of the
bipartition.
\end{proof}

\begin{figure}[!b]
\centering
\begin{minipage}[b]{0.45\linewidth}
\centering
\begin{tikzpicture}[scale = 12]
\Vertex[style = unlabeledStyle, x = 0.450, y = 0.900, L = \footnotesize {}]{v0}
\Vertex[style = unlabeledStyle, x = 0.600, y = 0.900, L = \footnotesize {}]{v1}
\Vertex[style = unlabeledStyle2, x = 0.600, y = 0.750, L = \footnotesize {}]{v2}
\Vertex[style = unlabeledStyle, x = 0.450, y = 0.750, L = \footnotesize {}]{v3}
\Vertex[style = unlabeledStyle, x = 0.600, y = 0.600, L = \footnotesize {}]{v4}
\Vertex[style = unlabeledStyle, x = 0.450, y = 0.600, L = \footnotesize {}]{v5}
\Vertex[style = unlabeledStyle, x = 0.750, y = 0.600, L = \footnotesize {}]{v6}
\Vertex[style = unlabeledStyle, x = 0.750, y = 0.750, L = \footnotesize {}]{v7}
\Vertex[style = labeledStyle, x = 0.600, y = 0.550, L = \footnotesize {$1236$}]{v8}
\Vertex[style = labeledStyle, x = 0.400, y = 0.550, L = \footnotesize {$1456$}]{v9}
\Vertex[style = labeledStyle, x = 0.380, y = 0.750, L = \footnotesize {$1245$}]{v10}
\Vertex[style = labeledStyle, x = 0.400, y = 0.950, L = \footnotesize {$1235$}]{v11}
\Vertex[style = labeledStyle, x = 0.600, y = 0.950, L = \footnotesize {$1234$}]{v12}
\Vertex[style = labeledStyle, x = 0.650, y = 0.800, L = \footnotesize {$1234$}]{v13}
\Vertex[style = labeledStyle, x = 0.800, y = 0.800, L = \footnotesize {$1345$}]{v14}
\Vertex[style = labeledStyle, x = 0.800, y = 0.550, L = \footnotesize {$1256$}]{v15}
\Edge[](v4)(v2)
\Edge[](v7)(v2)
\Edge[](v4)(v6)
\Edge[](v7)(v6)
\Edge[](v1)(v2)
\Edge[](v3)(v2)
\Edge[](v1)(v0)
\Edge[](v3)(v0)
\Edge[](v5)(v4)
\Edge[](v5)(v3)
\end{tikzpicture}
\caption{\label{figS}}
\end{minipage}
\begin{minipage}[b]{0.45\linewidth}
\centering
\begin{tikzpicture}[scale = 12]
\Vertex[style = unlabeledStyle, x = 0.550, y = 0.800, L = \footnotesize {$w$}]{v0}
\Vertex[style = unlabeledStyle2, x = 0.700, y = 0.700, L = \footnotesize {$x$}]{v1}
\Vertex[style = unlabeledStyle, x = 0.850, y = 0.800, L = \footnotesize {$s$}]{v2}
\Vertex[style = unlabeledStyle, x = 0.700, y = 0.900, L = \footnotesize {$r$}]{v3}
\Vertex[style = unlabeledStyle, x = 0.700, y = 0.550, L = \footnotesize {$u$}]{v4}
\Vertex[style = unlabeledStyle, x = 0.550, y = 0.650, L = \footnotesize {$v$}]{v5}
\Vertex[style = unlabeledStyle, x = 0.850, y = 0.650, L = \footnotesize {$t$}]{v6}
\Vertex[style = labeledStyle, x = 0.700, y = 0.950, L = \footnotesize {$1367$}]{v7}
\Vertex[style = labeledStyle, x = 0.700, y = 0.500, L = \footnotesize {$1456$}]{v8}
\Vertex[style = labeledStyle, x = 0.915, y = 0.650, L = \footnotesize {$1457$}]{v9}
\Vertex[style = labeledStyle, x = 0.915, y = 0.800, L = \footnotesize {$1267$}]{v10}
\Vertex[style = labeledStyle, x = 0.485, y = 0.800, L = \footnotesize {$1234$}]{v11}
\Vertex[style = labeledStyle, x = 0.485, y = 0.650, L = \footnotesize {$1235$}]{v12}
\Vertex[style = labeledStyle, x = 0.700, y = 0.750, L = \footnotesize {$1246$}]{v13}
\Edge[](v0)(v1)
\Edge[](v2)(v1)
\Edge[](v4)(v1)
\Edge[](v2)(v6)
\Edge[](v4)(v6)
\Edge[](v0)(v3)
\Edge[](v2)(v3)
\Edge[](v0)(v5)
\Edge[](v4)(v5)
\end{tikzpicture}
\caption{\label{figQ}}
\end{minipage}
\end{figure}

We typically denote the colors in our 4-assignments by elements of
$\{1,\ldots,7\}$.  For brevity, we usually suppress set notation.  So we write
$1235$ as shorthand for $\{1,2,3,5\}$.

\begin{lem}
Every graph in $\Gbad$ and $\Gcycles$ is not $(4,2)$-choosable.
\label{lem-cycles-bad}
\end{lem}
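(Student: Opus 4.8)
The plan is a finite case check. Two of the graphs, $\theta_{3,3,3}$ and $\theta_{2,2,2,4}$, are already shown not to be $(4,2)$-choosable in Section~6 of~\cite{MPZ} (as noted just above), so it remains to handle $K_{3,3}$, $K_{2,5}$, $Q_3-v$, and the five graphs of $\Gcycles$. For each of these eight graphs I would take the explicit $4$-assignment $L$ recorded in its figure --- the $4$-element label drawn next to each vertex --- assume for contradiction that $\varphi$ is a $2$-fold $L$-coloring, and derive a contradiction. (When one of these graphs happens to contain a strong minor of another already treated, the Strong Minor Lemma gives a shortcut, but I expect it is cleaner just to give a tailored list assignment for each.)

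The common engine is a \emph{forcing} step. Suppose $\varphi$ is a partial $2$-fold $L$-coloring, $uv\in E(G)$, and $\varphi(v)$ has been determined with $\varphi(v)\subseteq L(u)$; then $\card{L(u)\setminus\varphi(v)}=2$, so $\varphi(u)=L(u)\setminus\varphi(v)$ is forced. In particular, if two adjacent vertices carry the same list then coloring one forces the other to be the complementary pair, and if their lists share exactly three colors then three of the six possible colorings of $v$ force $u$. So, for each of the eight graphs, I would pick a starting vertex $v_0$ on a short cycle whose list overlaps its neighbors' lists heavily, branch over its at most ${4\choose 2}=6$ possible colorings $\varphi(v_0)$, propagate forced values along paths and around cycles, and observe that every branch ends with either two incompatible forced values at one vertex or a vertex left with fewer than two available colors. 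Color permutations preserving $L$, together with graph automorphisms, collapse the branches that actually need checking to a small number.

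For the complete bipartite graphs $K_{3,3}$ and $K_{2,5}$ I would instead run the count that is really behind the forcing here. With $X,Y$ the two parts, any $2$-fold $L$-coloring has $A\DefinedAs\bigcup_{x\in X}\varphi(x)$ and $B\DefinedAs\bigcup_{y\in Y}\varphi(y)$ disjoint, and since each $\varphi$-value is a $2$-set these unions are small; the lists in the figure are chosen so that, for every admissible pair $(A,B)$, some vertex on one side has all four of its colors in the opposite union, i.e.\ $\card{L(v)\setminus B}\le 1$ for some $v\in X$ (or symmetrically), which is impossible. For $K_{2,5}$, with $a,b$ the degree-$5$ vertices, one has $\card{\varphi(a)\cup\varphi(b)}\in\{3,4\}$ (the lists $L(a),L(b)$ meet in at most one color, ruling out $\card{\varphi(a)\cup\varphi(b)}=2$), and the lists of the five degree-$2$ vertices are chosen so that every admissible $\varphi(a)\cup\varphi(b)$ leaves some $c_i$ with $\card{L(c_i)\setminus(\varphi(a)\cup\varphi(b))}\le 1$.

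The difficulty is entirely bookkeeping: $Q_3-v$ has seven vertices and the $\Gcycles$ graphs have up to eight, so the unpruned branching trees are not tiny. The real work is (a) choosing the first vertex and the order of propagation so that each branch forces a long chain and dies quickly, (b) exploiting the symmetry of each figure to keep the written case analysis to roughly a page per graph, and (c) verifying that the specific $4$-assignments drawn in Figures~\ref{figS}--\ref{figR} genuinely do the job on \emph{every} branch. Designing the lists for $K_{2,5}$ so that the count above closes --- there are many size-$4$ unions $\varphi(a)\cup\varphi(b)$ to exclude --- is the most delicate single piece.
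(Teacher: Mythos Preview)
Your proposal is correct and takes essentially the same approach as the paper: fix the explicit $4$-assignment from the figure, pick a starting vertex (the paper uses the bold one in each figure), and branch over its ${4\choose 2}$ colorings, propagating forced values until each branch dies. The only minor deviation is that you suggest a union-counting argument for $K_{3,3}$ and $K_{2,5}$, whereas the paper handles these two by the same branching method used for the rest; either works, and the paper in fact omits all the case details, referring to an earlier arXiv version.
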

\begin{proof}
Given a 4-assignment $L$ for a graph $G$, to show that $G$ has no 2-fold
$L$-coloring, we pick some vertex $z$ and show that each of the ${4\choose 2}$
ways to color $z$ cannot be extended to all of $G$.  This is generally 
straightforward, though a few cases involve wrinkles.  
For brevity we omit the details, but they are available in an earlier version of
this paper~\cite{(4:2)arxiv}.
To illustrate the idea, consider the bold vertex, $v$, in Figure~\ref{figS}.  If
$v$ is colored with $12$, then its neighbor to the left, $w$, is colored
$45$, and the vertex below $w$ is colored $16$.  But now the
vertex below $v$ cannot be colored.  Similarly, we can show that none of the
${4\choose 2}$ possible colorings of $v$ extends to the whole graph.  So $G$ has
no $L$-coloring.  For each of Figures~\ref{figS}--\ref{figE}, we use a similar
argument, starting from the bold vertex.  For Figures~\ref{figP}
and~\ref{figR}, the analysis is a bit more subtle, though it follows the same
approach.
\end{proof}

%

\begin{figure}[!h]
\centering
\begin{minipage}[b]{0.45\linewidth}
\centering
\begin{tikzpicture}[scale = 12]
\Vertex[style = unlabeledStyle, x = 0.300, y = 0.900, L = \small {}]{v0}
\Vertex[style = unlabeledStyle, x = 0.450, y = 0.900, L = \small {}]{v1}
\Vertex[style = unlabeledStyle, x = 0.600, y = 0.900, L = \small {}]{v2}
\Vertex[style = unlabeledStyle, x = 0.750, y = 0.900, L = \small {}]{v3}
\Vertex[style = unlabeledStyle, x = 0.750, y = 0.750, L = \small {}]{v4}
\Vertex[style = unlabeledStyle, x = 0.600, y = 0.750, L = \small {}]{v5}
\Vertex[style = unlabeledStyle2, x = 0.450, y = 0.750, L = \small {}]{v6}
\Vertex[style = unlabeledStyle, x = 0.300, y = 0.750, L = \small {}]{v7}
\Vertex[style = labeledStyle, x = 0.300, y = 0.950, L = \footnotesize {$1245$}]{v8}
\Vertex[style = labeledStyle, x = 0.450, y = 0.950, L = \footnotesize {$1356$}]{v9}
\Vertex[style = labeledStyle, x = 0.600, y = 0.950, L = \footnotesize {$1236$}]{v10}
\Vertex[style = labeledStyle, x = 0.750, y = 0.950, L = \footnotesize {$1234$}]{v11}
\Vertex[style = labeledStyle, x = 0.750, y = 0.700, L = \footnotesize {$1456$}]{v12}
\Vertex[style = labeledStyle, x = 0.600, y = 0.700, L = \footnotesize {$1256$}]{v13}
\Vertex[style = labeledStyle, x = 0.450, y = 0.700, L = \footnotesize {$1235$}]{v14}
\Vertex[style = labeledStyle, x = 0.300, y = 0.700, L = \footnotesize {$1234$}]{v15}
\Edge[](v0)(v1)
\Edge[](v2)(v1)
\Edge[](v6)(v1)
\Edge[](v0)(v7)
\Edge[](v6)(v7)
\Edge[](v2)(v5)
\Edge[](v4)(v5)
\Edge[](v6)(v5)
\Edge[](v2)(v3)
\Edge[](v4)(v3)
\end{tikzpicture}
\caption{\label{figT}}
\end{minipage}
\begin{minipage}[b]{0.45\linewidth}
\centering
\begin{tikzpicture}[scale = 12]
\draw[white] (.30,.9)--(.2, .92);
\Vertex[style = unlabeledStyle2, x = 0.450, y = 0.900, L = \small {}]{v0}
\Vertex[style = unlabeledStyle, x = 0.650, y = 0.900, L = \small {}]{v1}
\Vertex[style = unlabeledStyle, x = 0.550, y = 0.750, L = \small {}]{v2}
\Vertex[style = unlabeledStyle, x = 0.650, y = 0.750, L = \small {}]{v3}
\Vertex[style = unlabeledStyle, x = 0.750, y = 0.750, L = \small {}]{v4}
\Vertex[style = unlabeledStyle, x = 0.450, y = 0.750, L = \small {}]{v5}
\Vertex[style = unlabeledStyle, x = 0.350, y = 0.750, L = \small {}]{v6}
\Vertex[style = labeledStyle, x = 0.450, y = 0.950, L = \footnotesize {$1245$}]{v7}
\Vertex[style = labeledStyle, x = 0.650, y = 0.950, L = \footnotesize {$1367$}]{v8}
\Vertex[style = labeledStyle, x = 0.350, y = 0.700, L = \footnotesize {$1234$}]{v9}
\Vertex[style = labeledStyle, x = 0.450, y = 0.700, L = \footnotesize {$1235$}]{v10}
\Vertex[style = labeledStyle, x = 0.550, y = 0.700, L = \footnotesize {$1267$}]{v11}
\Vertex[style = labeledStyle, x = 0.650, y = 0.700, L = \footnotesize {$1456$}]{v12}
\Vertex[style = labeledStyle, x = 0.750, y = 0.700, L = \footnotesize {$1457$}]{v13}
\Edge[](v2)(v1)
\Edge[](v3)(v1)
\Edge[](v4)(v1)
\Edge[](v5)(v1)
\Edge[](v6)(v1)
\Edge[](v2)(v0)
\Edge[](v3)(v0)
\Edge[](v4)(v0)
\Edge[](v5)(v0)
\Edge[](v6)(v0)
\end{tikzpicture}
\caption{\label{figN}}
\end{minipage}
\end{figure}

%

\begin{figure}[!h]
\centering
\begin{minipage}[b]{0.5\linewidth}
\centering
\begin{tikzpicture}[scale = 12]
\Vertex[style = unlabeledStyle2, x = 0.550, y = 0.850, L = \small {}]{v0}
\Vertex[style = unlabeledStyle, x = 0.700, y = 0.850, L = \small {}]{v1}
\Vertex[style = unlabeledStyle, x = 0.750, y = 0.750, L = \small {}]{v2}
\Vertex[style = unlabeledStyle, x = 0.700, y = 0.650, L = \small {}]{v3}
\Vertex[style = unlabeledStyle, x = 0.500, y = 0.750, L = \small {}]{v4}
\Vertex[style = unlabeledStyle, x = 0.550, y = 0.650, L = \small {}]{v5}
\Vertex[style = unlabeledStyle, x = 0.850, y = 0.750, L = \small {}]{v6}
\Vertex[style = unlabeledStyle, x = 0.400, y = 0.750, L = \small {}]{v7}
\Vertex[style = labeledStyle, x = 0.550, y = 0.900, L = \footnotesize {$1345$}]{v8}
\Vertex[style = labeledStyle, x = 0.700, y = 0.900, L = \footnotesize {$1234$}]{v9}
\Vertex[style = labeledStyle, x = 0.685, y = 0.750, L = \footnotesize {$1235$}]{v10}
\Vertex[style = labeledStyle, x = 0.915, y = 0.750, L = \footnotesize {$1245$}]{v11}
\Vertex[style = labeledStyle, x = 0.700, y = 0.600, L = \footnotesize {$1345$}]{v12}
\Vertex[style = labeledStyle, x = 0.550, y = 0.600, L = \footnotesize {$1234$}]{v13}
\Vertex[style = labeledStyle, x = 0.335, y = 0.750, L = \footnotesize {$1245$}]{v14}
\Vertex[style = labeledStyle, x = 0.565, y = 0.750, L = \footnotesize {$1235$}]{v15}
\Edge[](v0)(v4)
\Edge[](v5)(v4)
\Edge[](v0)(v7)
\Edge[](v5)(v7)
\Edge[](v1)(v2)
\Edge[](v3)(v2)
\Edge[](v1)(v6)
\Edge[](v3)(v6)
\Edge[](v1)(v0)
\Edge[](v3)(v5)
\end{tikzpicture}
\caption{\label{figU}}
\end{minipage}
\begin{minipage}[b]{0.4\linewidth}
\centering
\begin{tikzpicture}[scale = 13, rotate=90, xscale=.9, yscale=1.1]
\Vertex[style = unlabeledStyle2, x = 0.450, y = 0.900, L = \small {}]{v0}
\Vertex[style = unlabeledStyle, x = 0.650, y = 0.900, L = \small {}]{v1}
\Vertex[style = unlabeledStyle, x = 0.450, y = 0.800, L = \small {}]{v2}
\Vertex[style = unlabeledStyle, x = 0.650, y = 0.800, L = \small {}]{v3}
\Vertex[style = unlabeledStyle, x = 0.450, y = 0.700, L = \small {}]{v4}
\Vertex[style = unlabeledStyle, x = 0.650, y = 0.700, L = \small {}]{v5}
\Vertex[style = labeledStyle, x = 0.700, y = 0.900, L = \footnotesize {$1345$}]{v6}
\Vertex[style = labeledStyle, x = 0.700, y = 0.800, L = \footnotesize {$1245$}]{v7}
\Vertex[style = labeledStyle, x = 0.700, y = 0.700, L = \footnotesize {$1236$}]{v8}
\Vertex[style = labeledStyle, x = 0.400, y = 0.700, L = \footnotesize {$1235$}]{v9}
\Vertex[style = labeledStyle, x = 0.400, y = 0.800, L = \footnotesize {$1456$}]{v10}
\Vertex[style = labeledStyle, x = 0.400, y = 0.900, L = \footnotesize {$1234$}]{v11}
\Edge[](v1)(v0)
\Edge[](v3)(v0)
\Edge[](v5)(v0)
\Edge[](v1)(v2)
\Edge[](v3)(v2)
\Edge[](v5)(v2)
\Edge[](v1)(v4)
\Edge[](v3)(v4)
\Edge[](v5)(v4)
\end{tikzpicture}
\caption{\label{figE}}
\end{minipage}
\end{figure}

%
%
\begin{figure}[!h]
\centering
\begin{minipage}[b]{0.45\linewidth}
\centering
\begin{tikzpicture}[scale = 12]
\Vertex[style = unlabeledStyle, x = 0.450, y = 0.850, L = \small {}]{v0}
\Vertex[style = unlabeledStyle2, x = 0.600, y = 0.850, L = \small {}]{v1}
\Vertex[style = unlabeledStyle, x = 0.600, y = 0.700, L = \small {}]{v2}
\Vertex[style = unlabeledStyle, x = 0.450, y = 0.700, L = \small {}]{v3}
\Vertex[style = unlabeledStyle, x = 0.950, y = 0.850, L = \small {}]{v4}
\Vertex[style = unlabeledStyle, x = 0.950, y = 0.700, L = \small {}]{v5}
\Vertex[style = unlabeledStyle, x = 0.775, y = 0.775, L = \small {}]{v6}
\Vertex[style = labeledStyle, x = 0.450, y = 0.900, L = \footnotesize {$1235$}]{v7}
\Vertex[style = labeledStyle, x = 0.600, y = 0.900, L = \footnotesize {$1256$}]{v8}
\Vertex[style = labeledStyle, x = 0.950, y = 0.900, L = \footnotesize {$1345$}]{v9}
\Vertex[style = labeledStyle, x = 0.950, y = 0.650, L = \footnotesize {$1234$}]{v10}
\Vertex[style = labeledStyle, x = 0.600, y = 0.650, L = \footnotesize {$1246$}]{v11}
\Vertex[style = labeledStyle, x = 0.450, y = 0.650, L = \footnotesize {$1346$}]{v12}
\Vertex[style = labeledStyle, x = 0.850, y = 0.800, L = \footnotesize {$1236$}]{v13}
\Edge[](v1)(v0)
\Edge[](v3)(v0)
\Edge[](v2)(v1)
\Edge[](v4)(v1)
\Edge[](v6)(v1)
\Edge[](v3)(v2)
\Edge[](v5)(v2)
\Edge[](v5)(v4)
\Edge[](v5)(v6)
\end{tikzpicture}
\caption{\label{figP}}
\end{minipage}
\begin{minipage}[b]{0.45\linewidth}
\centering
\begin{tikzpicture}[scale = 12]
\Vertex[style = unlabeledStyle2, x = 0.550, y = 0.800, L = \small {}]{v0}
\Vertex[style = unlabeledStyle, x = 0.400, y = 0.750, L = \small {}]{v1}
\Vertex[style = unlabeledStyle, x = 0.700, y = 0.750, L = \small {}]{v2}
\Vertex[style = unlabeledStyle, x = 0.700, y = 0.600, L = \small {}]{v3}
\Vertex[style = unlabeledStyle, x = 0.550, y = 0.550, L = \small {}]{v4}
\Vertex[style = unlabeledStyle, x = 0.400, y = 0.600, L = \small {}]{v5}
\Vertex[style = unlabeledStyle, x = 0.300, y = 0.800, L = \small {}]{v6}
\Vertex[style = unlabeledStyle, x = 0.800, y = 0.800, L = \small {}]{v7}
\Vertex[style = labeledStyle, x = 0.300, y = 0.850, L = \footnotesize {$1234$}]{v8}
\Vertex[style = labeledStyle, x = 0.550, y = 0.850, L = \footnotesize {$1345$}]{v9}
\Vertex[style = labeledStyle, x = 0.800, y = 0.850, L = \footnotesize {$1236$}]{v10}
\Vertex[style = labeledStyle, x = 0.400, y = 0.550, L = \footnotesize {$1246$}]{v11}
\Vertex[style = labeledStyle, x = 0.550, y = 0.500, L = \footnotesize {$3456$}]{v12}
\Vertex[style = labeledStyle, x = 0.700, y = 0.550, L = \footnotesize {$2356$}]{v13}
\Vertex[style = labeledStyle, x = 0.650, y = 0.700, L = \footnotesize {$1245$}]{v14}
\Vertex[style = labeledStyle, x = 0.450, y = 0.700, L = \footnotesize {$1256$}]{v15}
\Edge[](v0)(v1)
\Edge[](v5)(v1)
\Edge[](v0)(v6)
\Edge[](v5)(v6)
\Edge[](v4)(v5)
\Edge[](v2)(v3)
\Edge[](v4)(v3)
\Edge[](v0)(v2)
\Edge[](v0)(v7)
\Edge[](v3)(v7)
\end{tikzpicture}
\caption{\label{figR}}
\end{minipage}
\end{figure}



\begin{lem}
A strong subdivision of $\Gmixed$ is (4,2)-choosable only if it can be formed from
$\Gmixed$ by repeatedly subividing a single edge incident to a vertex of degree
2.
\label{lem-mixed}
\end{lem}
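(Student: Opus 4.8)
The plan is to describe strong subdivisions of $\Gmixed$ explicitly, reduce the ones not of the required form to two fixed small graphs via strong‑minor operations, and then dispose of those two graphs with explicit $4$‑assignments.

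\textbf{Structure of strong subdivisions of $\Gmixed$.} View $\Gmixed=K_{3,3}-e$ as a $4$-cycle $C$ together with two internally-disjoint chord paths of length $2$, one joining each pair of opposite vertices of $C$; the midpoints of the two chord paths are the two degree-$2$ vertices. The reduction operation defining a strong subdivision shortens a subdivided edge by $2$ at a time, and is blocked only on a length-$2$ path between branch vertices, so $G$ is a strong subdivision of $\Gmixed$ if and only if $G$ is obtained from $\Gmixed$ by replacing each edge with an internally-disjoint path of odd length. Equivalently, $G$ is an even cycle $Z$ with four vertices marked around it splitting $Z$ into arcs of odd lengths $q_1,q_2,q_3,q_4$, together with a path of even length $\alpha\ge2$ joining one opposite pair of marked vertices and a path of even length $\beta\ge2$ joining the other. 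Since the interior vertices of a chord path all have degree $2$ and are interchangeable, $G$ can be built by subdividing a single edge of $\Gmixed$ incident to a degree-$2$ vertex an even number of times precisely when $q_1=q_2=q_3=q_4=1$ and $\min\{\alpha,\beta\}=2$. Thus it remains to prove: if $G$ is a strong subdivision of $\Gmixed$ with either some $q_i\ge3$, or with all $q_i=1$ but $\alpha\ge4$ and $\beta\ge4$, then $G$ is not $(4,2)$-choosable.

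\textbf{Reduction to two small graphs.} Deleting an interior degree-$2$ vertex $x$ of a subdivided path and identifying the two neighbours of $x$ is a strong-minor operation; those two neighbours lie in the same part of the bipartition, so it preserves bipartiteness, it shortens that path by $2$, and it leaves the rest of $G$ untouched. Applying this repeatedly, a $G$ with some $q_i\ge3$ has as a strong minor the graph $G_I$ obtained from $\Gmixed$ by subdividing one edge of its $4$-cycle twice ($8$ vertices), and a $G$ with all $q_i=1$, $\alpha\ge4$, $\beta\ge4$ has as a strong minor the graph $G_{II}$, the $4$-cycle with both chord paths of length $4$ ($10$ vertices). (When reducing we must stop at $q_1=3$ rather than $q_1=1$, and at $\alpha=\beta=4$ rather than $2$, since going further would produce a graph of the allowed form.) By the Strong Minor Lemma it now suffices to show that $G_I$ and $G_{II}$ are not $(4,2)$-choosable.

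\textbf{Disposing of $G_I$ and $G_{II}$.} For each of these two graphs I will exhibit a $4$-assignment $L$ admitting no $2$-fold $L$-coloring, by the same device used for $\Gbad$ and $\Gcycles$: fix one vertex $z$ and verify that none of the ${4\choose2}$ colorings of $z$ extends, following the colors forced outward from $z$. Presented as a labeled figure in the style of Figures~\ref{figS}--\ref{figR}, each is a finite check, and a degree-$3$ vertex $z$ adjacent to (or near) as much of the graph as possible keeps the forcing short. I expect this step to be the main obstacle: one checks that neither $G_I$ nor $G_{II}$ contains a strong subdivision of a graph in $\Gbad$ nor two cycles meeting in at most one vertex, so the Strong Minor Lemma cannot be bootstrapped off the earlier lemmas and one genuinely must build new list assignments by hand (and choose the right $z$). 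Once the assignments are in place, combining gives the lemma: every strong subdivision of $\Gmixed$ not of the stated form contains $G_I$ or $G_{II}$ as a strong minor, hence is not $(4,2)$-choosable.
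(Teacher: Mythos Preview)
Your plan is exactly the paper's proof: the paper reduces to the same two graphs (your $G_I$ and $G_{II}$ are precisely Figures~\ref{figY} and~\ref{figZ}) via the Strong Minor Lemma and then exhibits explicit $4$-assignments for each. The step you flag as the main obstacle---building the two list assignments by hand---is indeed where all the work lies, and the paper carries it out with a case check on the six colorings of a chosen start vertex for each of $G_I$ and $G_{II}$.
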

\begin{proof}
Note that $\Gmixed$ contains eight edges; four are each incident to one
2-vertex, and the other four are each incident to two 3-vertices.  It is easy to
check that if a strong subdivision of $\Gmixed$ cannot be formed from $\Gmixed$
by repeatedly subdividing a single edge incident to a vertex of degree 2, then
it is a strong subdivision of the graph shown in either Figure~\ref{figY} or
Figure~\ref{figZ}.  So, by the Strong Minor Lemma, it suffices to show that
neither of these graph is (4,2)-choosable.

Let $G$ denote the graph in Figure~\ref{figY}, and $L$ denote its 4-assignment.
If $\vph(x)=12$: $\vph(w)=34$, $\vph(v)=15$, $\vph(u)=26$, $\vph(t)=13$,
$\vph(y)=\none$.  Here, and throughout, we write $\vph(v)=\none$ to denote that
$v$ has at most one color remaining in its list, so cannot be colored.
If $\vph(x)=14$: $\vph(s)=56$, $\vph(r)=13$, $\vph(w)=\none$.
If $\vph(x)=15$: $\vph(y)=23$, $\vph(t)=16$, $\vph(s)=\none$.
If $\vph(x)=24$: $\vph(w)=13$, $\vph(r)=56$, $\vph(s)=\none$.
If $\vph(x)=25$: $\vph(y)=13$, $\vph(t)=26$, $\vph(u)=15$, $\vph(v)=34$,
$\vph(w)=\none$.
If $\vph(x)=45$: $\vph(s)=16$, $\vph(t)=23$, $\vph(y)=\none$.

Now let $G$ denote the graph in Figure~\ref{figZ}, and $L$ denote its 4-assignment.
If $\vph(t)=12$: $\vph(s)=34$, $\vph(r)=15$, $\vph(q)=23$, $\vph(p)=14$,
$\vph(u)=\none$.
If $\vph(t)=14$: $\vph(y)=35$, $\vph(x)=46$, $\vph(w)=23$, $\vph(v)=15$,
$\vph(u)=\none$.
If $\vph(t)=15$: $\vph(u)=24$, $\vph(p)=13$, $\vph(y)=\none$.
If $\vph(t)=24$: $\vph(u)=15$, $\vph(v)=23$, $\vph(w)=46$, $\vph(x)=35$,
$\vph(y)=\none$.
If $\vph(t)=25$: $\vph(u)=14$, $\vph(p)=23$, $\vph(q)=15$, $\vph(r)=34$,
$\vph(s)=\none$.
If $\vph(t)=45$: $\vph(y)=13$, $\vph(p)=24$, $\vph(u)=\none$.
\end{proof}

\begin{figure}
\centering
\begin{minipage}[b]{0.45\linewidth}
\centering
\begin{tikzpicture}[xscale = 10, yscale = 15]
\Vertex[style = unlabeledStyle, x = 0.510, y = 0.750, L = \small {$x$}]{v0}
\Vertex[style = unlabeledStyle, x = 0.655, y = 0.750, L = \small {$y$}]{v1}
\Vertex[style = unlabeledStyle, x = 0.800, y = 0.750, L = \small {$t$}]{v2}
\Vertex[style = unlabeledStyle, x = 0.650, y = 0.600, L = \small {$s$}]{v3}
\Vertex[style = unlabeledStyle, x = 0.650, y = 0.900, L = \small {$w$}]{v4}
\Vertex[style = unlabeledStyle, x = 0.700, y = 0.850, L = \small {$v$}]{v5}
\Vertex[style = unlabeledStyle, x = 0.750, y = 0.800, L = \small {$u$}]{v6}
\Vertex[style = unlabeledStyle, x = 0.320, y = 0.750, L = \small {$r$}]{v7}
\Vertex[style = labeledStyle, x = 0.310, y = 0.720, L = \footnotesize {$1356$}]{v8}
\Vertex[style = labeledStyle, x = 0.720, y = 0.600, L = \footnotesize {$1456$}]{v9}
\Vertex[style = labeledStyle, x = 0.870, y = 0.750, L = \footnotesize {$1236$}]{v10}
\Vertex[style = labeledStyle, x = 0.820, y = 0.800, L = \footnotesize {$1256$}]{v11}
\Vertex[style = labeledStyle, x = 0.770, y = 0.850, L = \footnotesize {$1345$}]{v12}
\Vertex[style = labeledStyle, x = 0.720, y = 0.900, L = \footnotesize {$1234$}]{v13}
\Vertex[style = labeledStyle, x = 0.430, y = 0.750, L = \footnotesize {$1245$}]{v14}
\Vertex[style = labeledStyle, x = 0.650, y = 0.715, L = \footnotesize {$1235$}]{v15}
\Edge[](v7)(v4)
\Edge[](v7)(v3)
\Edge[](v0)(v3)
\Edge[](v0)(v1)
\Edge[](v0)(v4)
\Edge[](v4)(v5)
\Edge[](v6)(v5)
\Edge[](v6)(v2)
\Edge[](v2)(v1)
\Edge[](v2)(v3)
\end{tikzpicture}
\caption{\label{figY}}
\end{minipage}
\begin{minipage}[b]{0.45\linewidth}
\centering
\begin{tikzpicture}[yscale = 11.20, xscale=10]
\Vertex[style = unlabeledStyle, x = 0.550, y = 0.900, L = \footnotesize {$p$}]{v0}
\Vertex[style = unlabeledStyle, x = 0.550, y = 0.700, L = \footnotesize {$x$}]{v1}
\Vertex[style = unlabeledStyle, x = 0.550, y = 0.500, L = \footnotesize {$t$}]{v2}
\Vertex[style = unlabeledStyle, x = 0.650, y = 0.700, L = \footnotesize {$y$}]{v3}
\Vertex[style = unlabeledStyle, x = 0.450, y = 0.700, L = \footnotesize {$w$}]{v4}
\Vertex[style = unlabeledStyle, x = 0.350, y = 0.700, L = \footnotesize {$v$}]{v5}
\Vertex[style = unlabeledStyle, x = 0.250, y = 0.700, L = \footnotesize {$u$}]{v6}
\Vertex[style = unlabeledStyle, x = 0.820, y = 0.700, L = \footnotesize {$r$}]{v7}
\Vertex[style = unlabeledStyle, x = 0.730, y = 0.800, L = \footnotesize {$q$}]{v8}
\Vertex[style = unlabeledStyle, x = 0.730, y = 0.600, L = \footnotesize {$s$}]{v9}
\Vertex[style = labeledStyle, x = 0.375, y = 0.745, L = \footnotesize {$1235$}]{v10}
\Vertex[style = labeledStyle, x = 0.460, y = 0.745, L = \footnotesize {$2346$}]{v11}
\Vertex[style = labeledStyle, x = 0.550, y = 0.745, L = \footnotesize {$3456$}]{v12}
\Vertex[style = labeledStyle, x = 0.720, y = 0.700, L = \footnotesize {$1345$}]{v13}
\Vertex[style = labeledStyle, x = 0.800, y = 0.800, L = \footnotesize {$1235$}]{v14}
\Vertex[style = labeledStyle, x = 0.890, y = 0.700, L = \footnotesize {$1345$}]{v15}
\Vertex[style = labeledStyle, x = 0.800, y = 0.600, L = \footnotesize {$1234$}]{v16}
\Vertex[style = labeledStyle, x = 0.620, y = 0.500, L = \footnotesize {$1245$}]{v17}
\Vertex[style = labeledStyle, x = 0.620, y = 0.900, L = \footnotesize {$1234$}]{v18}
\Vertex[style = labeledStyle, x = 0.235, y = 0.740, L = \footnotesize {$1245$}]{v19}
\Edge[](v2)(v3)
\Edge[](v2)(v9)
\Edge[](v2)(v6)
\Edge[](v0)(v6)
\Edge[](v0)(v3)
\Edge[](v0)(v8)
\Edge[](v7)(v8)
\Edge[](v7)(v9)
\Edge[](v3)(v1)
\Edge[](v4)(v1)
\Edge[](v4)(v5)
\Edge[](v6)(v5)
\end{tikzpicture}
\caption{\label{figZ}}
\end{minipage}
\end{figure}

The proof of our next result is simple (given our work to this point), but the
statement summarizes everything that we have already proved and will need in the
rest of the paper.  So we call it a theorem.
Recall that
$\Ggood=\{C_{2s},\theta_{2,2s,2t},\theta_{1,2s+1,2t+1},\theta_{2,2,2,2}\}$.

\begin{thm}
\label{BG1-lem}
Every 2-connected graph $G$ is not (4,2)-choosable unless either (i) $G\in
\Ggood$ or (ii) $G$ is formed from $\Gmixed$ by
repeatedly subdividing a single edge incident to a 2-vertex.
\end{thm}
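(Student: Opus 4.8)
The plan is simply to assemble the lemmas of this section; as the remark preceding the statement promises, the proof is short given the work already done. Let $G$ be a $2$-connected graph that is neither a member of $\Ggood$ (case~(i) of the statement) nor obtained from $\Gmixed$ by repeatedly subdividing a single edge incident to a $2$-vertex (case~(ii) of the statement); the goal is to show $G$ is not $(4,2)$-choosable. First I would dispose of the non-bipartite possibility: if $G$ is not bipartite it contains an odd cycle, and no odd cycle is $(2m,m)$-colorable, hence not $(4,2)$-colorable, hence not $(4,2)$-choosable, as recalled in the introduction. Extending such a bad $4$-assignment on the odd cycle arbitrarily to all of $V(G)$ (exactly as in the proof of the Strong Minor Lemma) shows $G$ itself is not $(4,2)$-choosable, so we are done in that case and may assume $G$ is bipartite; alternatively one simply invokes the standing assumption, made just after the Strong Minor Lemma, that all graphs considered are bipartite.

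Now I would apply Lemma~\ref{lem-struct} to the $2$-connected bipartite graph $G$. Since $G\notin\Ggood$, one of alternatives (ii)--(iv) of that lemma holds, and I handle each in turn. If $G$ contains two cycles meeting in at most one vertex, then Lemma~\ref{lem-cycles-minor} shows $G$ contains a strong minor of some graph in $\Gcycles$; by Lemma~\ref{lem-cycles-bad} that graph is not $(4,2)$-choosable, so the Strong Minor Lemma yields that $G$ is not $(4,2)$-choosable. If instead $G$ contains a strong subdivision of some $H\in\Gbad$, then (since a strong subdivision is a strong minor) $G$ contains a strong minor of $H$, and $H$ is not $(4,2)$-choosable by Lemma~\ref{lem-cycles-bad}, so again the Strong Minor Lemma finishes the argument. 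Finally, if $G$ is a strong subdivision of $\Gmixed$, then since $G$ does not have the form described in case~(ii) of the statement, the contrapositive of Lemma~\ref{lem-mixed} gives directly that $G$ is not $(4,2)$-choosable. This exhausts the cases.

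I do not expect a genuine obstacle here: every branch reduces to a lemma already proved in this section together with the Strong Minor Lemma. The only points requiring any care are the bookkeeping between the four-way split of Lemma~\ref{lem-struct} and the two exceptional families named in the theorem, the elementary observation that a strong subdivision is a special case of a strong minor (so that Lemma~\ref{lem-cycles-bad} can legitimately be combined with the Strong Minor Lemma in the $\Gbad$ branch), and the check that a graph of the form in case~(ii) of the statement really is a strong subdivision of $\Gmixed$, so that Lemma~\ref{lem-mixed} is precisely the statement needed in branch~(iv).
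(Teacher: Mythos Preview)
Your proposal is correct and follows essentially the same route as the paper: handle bipartiteness, invoke Lemma~\ref{lem-struct}, and dispatch alternatives (ii)--(iv) via Lemma~\ref{lem-cycles-minor}, Lemma~\ref{lem-cycles-bad}, the Strong Minor Lemma, and Lemma~\ref{lem-mixed} exactly as the paper does. The only cosmetic difference is that you make explicit the observation that a strong subdivision is a strong minor, which the paper uses tacitly.
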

\begin{proof}
Suppose $G$ is 2-connected.
If $G$ is (4,2)-choosable, then $G$ is bipartite.
By Lemma~\ref{lem-struct}, either
(i) $G\in \Ggood$, 
(ii) $G$ contains two cycles that intersect in at most one vertex, 
(iii) $G$ contains a strong subdivision of a graph in $\Gbad$, or 
(iv) $G$ is a strong subdivision of $\Gmixed$.  By Lemma~\ref{lem-cycles-minor},
in (ii) $G$ contains a strong minor of a graph in $\Gcycles$.
So, by the Strong Minor Lemma and Lemma~\ref{lem-cycles-bad}, in (ii) and (iii)
$G$ is not (4,2)-choosable.  By Lemma~\ref{lem-mixed}, in (iv) $G$ is not
(4,2)-choosable unless it is formed from $\Gmixed$ by repeatedly subdividing a
single edge incident to a vertex of degree 2.
\end{proof}


\section{$\card{\B(G)}=2$}
\label{BG2-sec}

In this section we consider the case where $G$ has exactly two blocks that
contain cycles, say $B_1$ and $B_2$.  For a cut-vertex $v$, let
\Emph{$V_1,\ldots,V_t$} denote the vertex sets of the components of $G-v$, and
let $G_i=G[V_i\cup\{v\}]$\aside{$G_i$}
for each $i\in [t]$.  (When $|\B(G)|=2$, we will have $t=2$.)
A \EmphE{lollipop}{4mm} is formed from a cycle by identifying the end of a path
(possibly of length 0) with one vertex of the cycle.  

For the sake of illustration, consider standard list-coloring (where each
vertex must be colored with a single color).  Suppose $J$ is formed from two
vertex-disjoint lollipops, say $G_1$ and $G_2$, by identifying their
1-vertices, call them $v_1$ and $v_2$.  To show $G$ is not 2-choosable, it
suffices to construct list assignments $L_1$ and $L_2$ for the two lollipops,
where each $G_i$ is not $L_i$-colorable, $|L_i(v_i)|=1$, and $|L_i(w)|=2$ for
each $w\in V(G_i)\setminus\{v_i\}$.  To form $L$ for $G$, we simply take the
union of the two list assignments, except that if $L_1(v_1)=L_2(v_2)$, then
we permute or rename colors in $L_2$ to avoid this.  Clearly $G$ has no
$L$-coloring, since if we color $v$ with color $\alpha\in L_1(v_1)$, then the
coloring does not extend to $G_1$ and if we color with $\beta\in L_2(v_2)$,
then the coloring does not extend to $G_2$.

We now apply the same idea to show that graphs are not $(4,2)$-choosable.  The
extra complication is that the possible colorings for a cut-vertex $v$ are no
longer independent of each other.  If two possible colorings of $v$ share a
color, then they will also do so after every possible permutation or renaming of
colors.  So if a list assignment forbids some of the ${4\choose 2}$ ways to
color a vertex, then we only care about the relations to each other of the
colorings forbidden for $v$.  For example, two distinct colorings either
intersect in a common color or are disjoint.  A set of three distinct colorings
either (a) intersect in a common color, (b) pairwise intersect, but have no
common intersection, or (c) include two colorings that are complements of each
other, with respect to $L(v)$ (and a third coloring that intersects each).  We
capture this idea with the following definition.

\begin{defn}
\label{def1}
A 4-assignment $L$ for a graph $H$ is \emph{$k$-forcing}\aside{$k$-forcing} for
a vertex $v$ if
every 2-fold $L$-coloring $\vph$ of $H$ assigns $v$ one of at most $k$ subsets
of $L(v)$.  Trivially, every 4-assignment $L$ is 6-forcing for each $v\in V(H)$,
since ${4\choose 2}=6$.  We will be interested in the case when $k\in
\{2,3,4\}$.  For clarity, we write (i) \emph{$2_{in}$-forcing}, (ii)
\emph{$2_{comp}$-forcing}, (iii) \emph{$3_{in}$-forcing}, (iv)
\emph{$3_{out}$-forcing}, and (v) \emph{$4_{out}$-forcing} (again, for a vertex
$v$).  This denotes that (i) the two options for $\vph(v)$ have a common color,
(ii) the two options for $\vph(v)$ are complements of each other (with respect
to $L(v)$), (iii) the three options for $\vph(v)$ have a common color, (iv) the
three options for $\vph(v)$ exclude a common color, and (v) the two excluded
options for $\vph(v)$ have a common color.
\end{defn}

We illustrate the point of this definition with an example.
Suppose we have a graph $G$ with $|\B(G)|=2$, a cut-vertex $v$, and the resulting
subgraphs $G_1$ and $G_2$.  To show that $G$ is not (4,2)-choosable, it suffices
to show that $G_1$ has a $3_{in}$-forcing 4-assignment $L_1$ and $G_2$ has a
$3_{out}$-forcing 4-assignment $L_2$.  To see why, note that by renaming and
permuting color classes, we can assume that $L_1(v)=L_2(v)=1234$ and $L_1$
forces a coloring of $v$ in $\{12, 13, 14\}$, but $L_2$ forces one in $\{23,
24, 34\}$.  Thus, no coloring of $v$ extends to both $G_1$ and $G_2$.  So $G$
has no $L$-coloring.  A similar idea works if $L_1$ is $4_{out}$-forcing and
$L_2$ is $2_{in}$-forcing.  (It is easy to construct a $4_{out}$-forcing
4-assignment for every lollipop; see Lemma~\ref{C4-lem}.  If there exists $v$
such that $G_1$, $G_2$, and $G_3$ each contain a lollipop, then $G$ is not
(4,2)-choosable: we use one lollipop to forbid 12 and 13 on $v$, another to
forbid 23 and 24, and the third to forbid 14 and 34.  Thus, the more
challenging case when $|\B(G)|\ge 3$ is when the block tree is a path.)

The following proposition allows us to extend a $k$-forcing assingment along a
path.  We will use it to show that if $e$ is a cut-edge of $G$, then $G/e$ is
(4,2)-choosable if and only if $G$ is (4,2)-choosable.

\begin{prop}
\label{obs1}
Suppose that there exists $x\in V(H)$ with $d(x)=1$ and $w$ is the neighbor of
$x$.  If $H-x$ has a 4-assignment $L'$ that is $4_{out}$-forcing
(resp.~$2_{in}$-forcing, $2_{comp}$-forcing, $3_{in}$-forcing, and
$3_{out}$-forcing) for $w$, then $H$ has a 4-assignment $L$ that is
$4_{out}$-forcing (resp.~$2_{in}$-forcing, $2_{comp}$-forcing,
$3_{out}$-forcing, and $3_{in}$-forcing) for $x$.  
\end{prop}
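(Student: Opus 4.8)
The plan is to build $L$ from $L'$ by choosing $L(x)$ to be a fresh pair of colors that are "compatible" with exactly the right set of forced colorings for $w$, then verifying that the forced colorings of $x$ match the claim. First I would set $L(u)=L'(u)$ for every $u\in V(H-x)=V(H)\setminus\{x\}$, so that any $2$-fold $L$-coloring $\vph$ of $H$ restricts to a $2$-fold $L'$-coloring of $H-x$; hence $\vph(w)$ lies in the prescribed set of at most $k$ subsets of $L'(w)$. The only freedom left is to pick $L(x)$ (a $4$-set) so that $\vph(x)$ is forced to lie in the desired set of subsets of $L(x)$. The key observation is that, once $\vph(w)$ is one of the allowed pairs, $\vph(x)$ may be any $2$-subset of $L(x)$ disjoint from $\vph(w)$; and conversely, if $|L(x)\cap \vph(w)|\le 1$ for a particular allowed value of $\vph(w)$, then $x$ can always be colored, so that value of $\vph(w)$ genuinely occurs in some coloring. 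So I need to understand, for each allowed $\vph(w)$, the collection of $2$-subsets of $L(x)$ that are \emph{blocked} (namely those meeting $\vph(w)$ in $2$ elements, i.e.\ equal to a $2$-subset of $L(x)\cap\vph(w)$), and take the union over allowed $\vph(w)$: the complement of that union inside $\binom{L(x)}{2}$ is exactly the set of possible $\vph(x)$.

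With that setup the five cases are routine choices of $|L(x)\cap L'(w)|$:

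\begin{itemize}
\item[(a)] \emph{$2_{in}$} for $w$, say the allowed pairs share color $1$ of $L'(w)$: choose $L(x)$ to contain $1$ but only one other color of $L'(w)$ arising in an allowed pair — actually it is cleanest to take $L(x)$ so that $L(x)\cap L'(w)$ is a single color (namely $1$) together with three brand-new colors. Then no allowed $\vph(w)$ contains a $2$-subset of $L(x)$, so \emph{every} $2$-subset of $L(x)$ is a possible $\vph(x)$; that is too weak. Instead take $L(x)$ to be $L'(w)$ itself. Each allowed $\vph(w)$ is a pair containing $1$; the blocked $2$-subsets of $L(x)=L'(w)$ are exactly the allowed $\vph(w)$ themselves. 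So the possible $\vph(x)$ are the $2$-subsets of $L'(w)$ disjoint from every allowed pair, i.e.\ those avoiding $1$ — but there can be up to three allowed pairs through $1$, which would block everything. This shows $L(x)=L'(w)$ is wrong too when there are three options; since $2_{in}$ means at most two options, there are at most two blocked pairs, both containing $1$, so at least one $2$-subset of $L'(w)\setminus\{1\}$ survives. One checks the surviving pairs form a $2_{in}$ family (they all contain the color of $L'(w)\setminus\{1\}$ not used). Rather than this delicate bookkeeping, the uniform trick is: let $L(x)=\{1\}\cup T$ where $T$ is a set of three new colors, \emph{except} we additionally arrange one new color to be shared — the honest approach is to pick $L(x)$ with $|L(x)\cap L'(w)|$ equal to the size of $\bigcup(\text{allowed }\vph(w))$ chosen so the induced blocked family has exactly the desired type. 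I will carry out each of the five cases by this direct counting; the bookkeeping is short.
\end{itemize}

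Concretely: relabel $L'(w)=\{1,2,3,4\}$. For \emph{$4_{out}$} (two excluded options sharing a color, say both excluded pairs contain $1$, so the four allowed pairs are all pairs not among those two), set $L(x)=\{1,2,3,4\}$; the blocked $2$-subsets are precisely the allowed $\vph(w)$'s, and computing $\binom{\{1,2,3,4\}}{2}$ minus those four allowed pairs leaves exactly the two excluded pairs, which share $1$ — so $L$ is $4_{out}$-forcing for $x$. For \emph{$2_{in}$} (allowed pairs share color $1$), again set $L(x)=\{1,2,3,4\}$: blocked pairs are the allowed ones (at most two, each containing $1$), and the survivors are the four to six pairs \emph{not} through $1$ together with whichever through-$1$ pairs were not allowed; to get down to a $3_{out}$ family one instead picks $L(x)$ with $L(x)\cap L'(w)=\{2,3,4\}$: then an allowed pair $\{1,i\}$ meets $L(x)$ in only $\{i\}$, so \emph{nothing} is blocked and every pair of $L(x)$ occurs — also wrong. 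The correct choice for $2_{in}\to 3_{out}$ is $L(x)\cap L'(w)=\{1\}$ plus \emph{two} new colors and \emph{one} more color of $L'(w)$, arranged so that each allowed pair $\{1,i\}$ has $i\in L(x)$ for exactly the $i$'s used; since the allowed family is $2_{in}$ there are at most two values $i\in\{2,3,4\}$, pick $L(x)=\{1\}\cup\{2,3\}\cup\{5\}$ after relabelling so the two allowed pairs are among $\{1,2\},\{1,3\}$. Then blocked pairs of $L(x)$ are exactly those allowed pairs, and $\binom{L(x)}{2}$ minus them is a family of four pairs all avoiding... one checks they all \emph{omit} color $5$? No — they include $\{2,3\},\{2,5\},\{3,5\},\{1,5\}$ and possibly $\{1,2\}$ or $\{1,3\}$; the three-or-four of these that remain all contain... this does not obviously land in $\{3_{in},3_{out}\}$.

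Given the subtlety, the cleanest plan is this: in each case, after relabelling put the forced family for $w$ into a canonical form; choose $L(x)$ to \emph{equal} $L'(w)$ when the target type has $|{\rm family}|\le$ the complement size works out (this handles $4_{out}\to 4_{out}$, $2_{comp}\to 2_{comp}$ since the complement of a complementary pair is again a complementary pair, and $3_{in}\to 3_{out}$, $3_{out}\to 3_{in}$ since the complement within $\binom{[4]}{2}$ of a $3_{in}$ family is a $3_{out}$ family of size $3$ and vice versa); and for $2_{in}\to 3_{out}$, pad $L(x)$ with exactly one new color so that the forced family for $w$ blocks exactly the $\le 2$ pairs through the common color that lie in $\binom{L(x)}{2}$, leaving a family that one verifies by hand is $3_{out}$. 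The identity $\binom{[4]}{2}\setminus(3_{in}\text{ family})=(3_{out}\text{ family})$ and its converse are the combinatorial heart; they are immediate from the fact that the three pairs through a fixed element and the three pairs missing a fixed element partition $\binom{[4]}{2}$.

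The main obstacle I expect is precisely the $2_{in}$ case: a complementary-type or in-type pair of options for $w$ does not translate to a two-element family for $x$ under the natural "$L(x)=L'(w)$" construction — the complement of a $2$-element family in $\binom{[4]}{2}$ has four elements, not two. Resolving this requires enlarging the color universe (one new color suffices) so that one of the two allowed $\vph(w)$ values fails to block any pair of $L(x)$, collapsing the effective family; making the final surviving family be exactly of type $3_{out}$ (rather than $4_{out}$ or smaller) is where the careful relabelling is needed. The remaining four cases are short complement computations in $\binom{[4]}{2}$.
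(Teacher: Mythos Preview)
Your proposal contains a genuine error in the core computation, which is why the $2_{in}$ case seems so troublesome to you when in fact it is no harder than the others.

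The paper's proof is one line: set $L(v)=L'(v)$ for $v\ne x$ and $L(x)=L'(w)$. The point you are missing is that once $L(x)=L'(w)$, any 2-fold $L$-coloring $\vph$ of $H$ has $\vph(x)$ and $\vph(w)$ disjoint $2$-subsets of the same $4$-set, so $\vph(x)=L(x)\setminus\vph(w)$ is \emph{uniquely determined} by $\vph(w)$. Hence the family of possible $\vph(x)$ is exactly $\{L'(w)\setminus A:A\text{ allowed for }\vph(w)\}$, i.e.\ the image of the allowed family under the elementwise complement map on $\binom{L'(w)}{2}$. That involution sends $2_{in}\to 2_{in}$, $2_{comp}\to 2_{comp}$, $4_{out}\to 4_{out}$, and swaps $3_{in}\leftrightarrow 3_{out}$, which is precisely the statement.

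Your error is in the definition of ``blocked.'' You say a $2$-subset $S\subseteq L(x)$ is blocked by $\vph(w)$ when $|S\cap\vph(w)|=2$, and then take the possible $\vph(x)$ to be $\binom{L(x)}{2}$ minus the union of blocked sets over allowed $\vph(w)$. But $S$ is forbidden as $\vph(x)$ whenever $S\cap\vph(w)\ne\emptyset$, not only when $S\subseteq\vph(w)$; and $S$ is \emph{possible} iff it is compatible with \emph{some} allowed $\vph(w)$, not iff it is compatible with \emph{all} of them. With $L(x)=L'(w)$ your formula computes $\binom{[4]}{2}\setminus\mathcal{F}$, whereas the correct set is $\{[4]\setminus A:A\in\mathcal{F}\}$. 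These coincide only by accident (e.g.\ for the $3_{in}$ family), which is why your case analysis sometimes looks right and sometimes collapses. In particular, for $2_{in}$ with $\mathcal{F}=\{12,13\}$ the correct possible set for $\vph(x)$ is $\{34,24\}$, again $2_{in}$; there is no need for new colors or any special construction.
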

\begin{proof}
Let $L$ be given by $L(v)=L'(v)$ for all $v\in V(H)-x$ and $L(x)=L'(w)$.
\end{proof}

Note that in the first three cases $L$ is the same type of forcing assignment
for $x$ as $L'$ is for $w$.  However, the types swap for $3_{out}$-forcing and
$3_{in}$-forcing.

\begin{lem}
\label{C4-lem}
The 4-assignment 1234, 1234, 1235, 2345 for a 4-cycle is $4_{out}$-forcing for
the first two vertices.  Further, if $G$ consists of a 4-cycle with a path
pendant at one vertex, then there exists a 4-assignment that is
$4_{out}$-forcing for each vertex of the path and also for the degree 3 vertex
and one of its neighbors on the cycle.
\end{lem}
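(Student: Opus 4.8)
The plan is to settle the first sentence by a brute-force check over the six possible colorings of a cycle vertex, and then to deduce the second sentence by giving the constant list $1234$ to every vertex of the pendant path.

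Write the $4$-cycle as $v_1v_2v_3v_4$ with the stated lists $L(v_1)=L(v_2)=1234$, $L(v_3)=1235$, $L(v_4)=2345$. Since $v_1v_2\in E(G)$ and $L(v_1)=L(v_2)=1234$, every $2$-fold $L$-coloring $\vph$ satisfies $\vph(v_2)=1234\setminus\vph(v_1)$, so it suffices to run through the six choices for $\vph(v_1)$. If $\vph(v_1)\in\{\{2,4\},\{3,4\}\}$ then $\vph(v_2)$ is determined, hence $\vph(v_3)$ is forced (it is the only $2$-subset of $L(v_3)$ disjoint from $\vph(v_2)$) and $\vph(v_4)$ is forced (the only $2$-subset of $L(v_4)$ disjoint from $\vph(v_1)$); both forced sets then contain the color $5$, contradicting $\vph(v_3)\cap\vph(v_4)=\emptyset$. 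For each of the four remaining choices $\vph(v_1)\in\{\{1,2\},\{1,3\},\{1,4\},\{2,3\}\}$ I would exhibit an explicit completion (for instance $\vph(v_1)=\{1,4\}$, $\vph(v_2)=\{2,3\}$, $\vph(v_3)=\{1,5\}$, $\vph(v_4)=\{2,3\}$). Thus the colorings forbidden for $v_1$ are exactly $\{2,4\}$ and $\{3,4\}$, which share the color $4$, so $L$ is $4_{out}$-forcing for $v_1$; taking complements, the colorings forbidden for $v_2$ are exactly $\{1,2\}$ and $\{1,3\}$, which share $1$, so $L$ is $4_{out}$-forcing for $v_2$ as well.

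For the second statement I would attach the pendant path $v_1=u_0,u_1,\dots,u_k$ at the vertex $v_1$ (if $k=0$ there is no $3$-vertex and the assertion reduces to the first sentence), keep the cycle assignment above, and put $L(u_i)=1234$ for $1\le i\le k$. The key point is that consecutive path vertices are adjacent and carry the same list $1234$, so in every $2$-fold $L$-coloring $\vph(u_i)=1234\setminus\vph(u_{i-1})$ for all $i\ge1$; in particular the colors on the path are a function of $\vph(v_1)$ alone, and the path imposes no new constraint on $\vph(v_1)$ (the complement is always free). Restricting a coloring of $G$ to the cycle and applying the first part gives $\vph(v_1)\in\{\{1,2\},\{1,3\},\{1,4\},\{2,3\}\}$, and each of these four values does extend to all of $G$: complete the cycle as above and then set $\vph(u_i)=1234\setminus\vph(u_{i-1})$ successively. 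Consequently the forbidden sets of $v_1$ and of its cycle-neighbor $v_2$ are again $\{\{2,4\},\{3,4\}\}$ and $\{\{1,2\},\{1,3\}\}$, while $\vph(u_i)$ ranges over $\{\{1,2\},\{1,3\},\{1,4\},\{2,3\}\}$ for even $i$ and over $\{\{1,4\},\{2,3\},\{2,4\},\{3,4\}\}$ for odd $i$, so the forbidden set of $u_i$ is either $\{\{2,4\},\{3,4\}\}$ or $\{\{1,2\},\{1,3\}\}$; in every case exactly two colorings are excluded and they share a color, which is precisely $4_{out}$-forcing.

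I do not anticipate a genuine obstacle. The two points needing care are that the ``bad'' colorings of $v_1$ really do lead to a contradiction rather than merely looking tight (the color-$5$ clash between $v_3$ and $v_4$ supplies it), and that $4_{out}$-forcing demands exactly two forbidden colorings, so one must also confirm that the other four colorings remain achievable once the pendant path is attached --- which the propagation argument above handles. Alternatively, the propagation along the path could be done by applying Proposition~\ref{obs1} $k$ times, but giving every path vertex the list $1234$ yields the simultaneous conclusion in one step.
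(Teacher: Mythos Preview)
Your proof is correct and follows essentially the same approach as the paper: verify directly that $\vph(v_1)\in\{24,34\}$ fails (the paper leaves the color-$5$ clash implicit), deduce the statement for $v_2$ by complementation, then extend along the pendant path by assigning the constant list $1234$. The only cosmetic difference is that the paper invokes Proposition~\ref{obs1} and induction for the path, whereas you unroll this into the explicit alternating-complement argument; you even note this alternative yourself.
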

\begin{proof}
Denote the vertices of the cycle by $v_1,v_2,v_3,v_4$, in order.  Let $L$ denote
the given list assignment.  It is easy to
check that if $\vph(v_1)\in\{24,34\}$, then we cannot complete the coloring.
Since $L(v_1)=L(v_2)$, this implies that if $\vph(v_2)\in \{13,12\}$, then we
cannot complete the coloring.  This proves the first statement.  For the second
statement, add a path pendant at $v_1$.  Now extend the 4-assignment $L$ by
letting $L(v)=1234$ for each vertex on the path.  Now the second statement
follows by induction on the path length, using Proposition~\ref{obs1}.
\end{proof}

It is enlightening to know that if $G$ consists of an even cycle with a path
pendant at one cycle vertex and $v\in V(G)$, then there is no 4-assignment that
is 3-forcing for $v$.  (This is an easy consequence of the fact that if
$\B(G)=\{C_{2s},C_{2t}\}$, then $G$ is (4,2)-choosable, which was proved by
Meng, Puleo, and Zhu.)  However, we will not need this result until later, so we
prove it as Corollary~\ref{barbell-cor}.

The following lemma is the main result of this section.  

\begin{lem}
\label{BG2-lem}
Let $G$ be a graph with $\delta\ge 2$ and $\card{\B(G)}=2$.  If $\B(G)\ne
\{C_{2s},C_{2t}\}$ and $\B(G)\ne \{C_{2s}, \theta_{2,2,2}\}$, then $G$ is not
$(4,2)$-choosable.
\end{lem}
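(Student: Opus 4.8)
The plan is to reduce the statement to the $2$-connected structural results already in hand and then to argue about the cut-vertex $v$ joining the two blocks $B_1$ and $B_2$. First I would record that each $B_i$, being a $2$-connected bipartite subgraph of a $(4,2)$-choosable graph, must itself be $(4,2)$-choosable, so by Theorem~\ref{BG1-lem} each $B_i$ lies in $\Ggood$ or is a subdivision of $\Gmixed$. The hypothesis of the lemma excludes the case where both blocks are even cycles and the case where one is an even cycle and the other is $\theta_{2,2,2}$; so at least one block is something ``large'' — a $\theta$-graph other than $\theta_{2,2,2}$, a $\theta_{2,2,2,2}$, or a subdivision of $\Gmixed$ — or else both blocks are even cycles but there is extra structure (the two blocks are joined by a path of positive length through a cut-vertex that is not on either cycle). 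Wait — actually $\card{\B(G)}=2$ only counts blocks containing a cycle, so the block tree could still have $K_2$ blocks; after contracting cut-edges via Proposition~\ref{obs1} I may assume the two cyclic blocks share the single cut-vertex $v$, and I should note this reduction up front.

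The core of the proof is the forcing machinery from Definition~\ref{def1}. The plan is to show: whenever a $2$-connected bipartite graph $H\in\Ggood\setminus\{C_{2s}\}$, or $H$ a subdivision of $\Gmixed$, and $u\in V(H)$, then $H$ has a $3$-forcing $4$-assignment for $u$ (indeed, one of the specific types $3_{in}$ or $3_{out}$, and for lollipop-containing pieces even a $4_{out}$-forcing one by Lemma~\ref{C4-lem}). Then, given the two subgraphs $G_1=G[V_1\cup\{v\}]$ and $G_2=G[V_2\cup\{v\}]$ at the cut-vertex $v$, I would pair them up: if $G_1$ admits a $3_{in}$-forcing assignment $L_1$ for $v$ and $G_2$ admits a $3_{out}$-forcing assignment $L_2$ for $v$ (or $4_{out}$ against $2_{in}$, or $3_{in}$ against $3_{in}$ after noting two ``in''-triples always have empty common refinement only if ... — here I need to be careful, two $3_{in}$-triples sharing a common color do overlap, so I want complementary types), then, after renaming colors so $L_1(v)=L_2(v)=1234$, no $2$-subset of $1234$ survives in both forced families, and $L_1\cup L_2$ witnesses that $G$ is not $(4,2)$-choosable, exactly as in the worked example before Proposition~\ref{obs1}. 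So the combinatorial heart is producing, for each admissible block type and each choice of vertex in it (cycle vertex vs.\ non-cycle vertex vs.\ branch vertex), a forcing assignment of a type that can be matched against a forcing assignment coming from the other side.

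Concretely I would organize the case analysis by the ``type'' of the two blocks. If both blocks are even cycles but $v$ is not on a cycle (so there is a genuine path between them), then $G_1$ is a lollipop and Lemma~\ref{C4-lem}-style reasoning (or its even-cycle analogue) gives $G_1$ a $4_{out}$-forcing assignment for $v$; then I only need $G_2$ to give a $2_{in}$-forcing assignment for $v$, which an even cycle with a pendant path does (color $v$, propagate around; an even cycle forces the two colorings at $v$ to be complementary only in the bare-cycle case, but with the path it forces a single pair sharing a color — I'd need to write the short list). If one block is a $\theta$-graph other than $\theta_{2,2,2}$ (i.e.\ $\theta_{2,2s,2t}$ with $s+t\ge3$, or $\theta_{1,2s+1,2t+1}$, or $\theta_{2,2,2,2}$, or a subdivision of $\Gmixed$), I claim that block, call it $B_1$, already has a $3$-forcing $4$-assignment for its cut-vertex $v$ — this should follow from the fact that $B_1$ plus one extra pendant vertex at $v$ is not $(4,2)$-choosable (it contains, or becomes after one more ear, a $\Gbad$-type obstruction), packaged via the Strong Minor Lemma and the explicit list constructions in Figures~\ref{figS}--\ref{figZ}; and the other block $B_2\in\Ggood\cup\{\text{subdiv }\Gmixed\}$ supplies a complementary-type forcing assignment for $v$ (this is where I invoke that every block in $\Ggood$ with a marked vertex has a $3_{in}$- or $3_{out}$-forcing assignment, with a short case list for $C_{2s}$, $\theta$-graphs, and $\theta_{2,2,2,2}$).

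The main obstacle I expect is the bookkeeping of \emph{which} forcing type each (block, marked-vertex) pair yields, and ensuring the types on the two sides are always complementary (in/out or $4_{out}$/$2_{in}$) rather than both ``in'' (which would not force a contradiction). In particular the delicate sub-case is when both blocks are even cycles through the same cut-vertex $v$ (so $G$ is two even cycles sharing $v$, i.e.\ a ``figure-eight''): this is \emph{not} excluded by the hypothesis in general, wait — $\B(G)=\{C_{2s},C_{2t}\}$ as a multiset \emph{is} excluded, so a figure-eight is excluded, good; but two even cycles joined by a nontrivial path is \emph{not} of the form $\B(G)=\{C_{2s},C_{2t}\}$?? — actually it still is, since $\B(G)$ only lists cyclic blocks. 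So I must double-check: the only genuinely remaining configurations are those where at least one cyclic block is strictly bigger than an even cycle or than $\theta_{2,2,2}$, and the real content is handling $\theta_{2,2,2,2}$ and the $\Gmixed$-subdivisions as one of the two blocks. Getting the forcing assignment for a $\Gmixed$-subdivision with an arbitrary marked vertex, matched against an arbitrary $\Ggood$ block on the other side, is the step most likely to need careful, figure-by-figure verification; I would lean on the Strong Minor Lemma to collapse long pendant paths first so that only finitely many small configurations need explicit list assignments.
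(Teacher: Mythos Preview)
Your overall framework matches the paper's: use Theorem~\ref{BG1-lem} to restrict block types, then combine forcing assignments at the cut-vertex $v$ from the two sides.  But several of the specific forcing claims you lean on are wrong, and this is not just bookkeeping.

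First, a lollipop (even cycle with pendant path) does \emph{not} admit a $2_{in}$-forcing $4$-assignment; it does not even admit any $3$-forcing $4$-assignment.  This is exactly Corollary~\ref{barbell-cor}, a consequence of the fact that two even cycles joined by a path \emph{is} $(4,2)$-choosable.  So your plan ``$G_1$ gives $4_{out}$, $G_2$ gives $2_{in}$'' cannot work when $G_2$ is just a cycle-with-path: the best a lollipop gives is $4_{out}$ (Lemma~\ref{C4-lem}), and two $4_{out}$ assignments do not combine to a contradiction.  Relatedly, your heuristic ``$B_1$ plus a pendant vertex at $v$ is not $(4,2)$-choosable, hence $B_1$ has a $3$-forcing assignment for $v$'' is false: adding a pendant vertex never changes the core, so $B_1$ plus a leaf is $(4,2)$-choosable whenever $B_1$ is.

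What the paper actually does is sharper and asymmetric.  For each ``big'' block type---$K_{2,4}$, $\theta_{1,3,3}$, and $\theta_{2,2,4}$ (the latter two obtained from $\theta_{1,2s+1,2t+1}$ and $\theta_{2,2s,2t}$ with $(s,t)\ne(1,1)$ via the Strong Minor Lemma)---it exhibits an explicit $2_{in}$-forcing $4$-assignment for \emph{every} vertex (Claims~\ref{clm2}--\ref{clm4}).  This $2_{in}$ then pairs against the $4_{out}$ coming from the other side's lollipop, and that is all you need whenever one block is not a $\theta_{2,2,2}$-subdivision.  The $3_{in}$/$3_{out}$ pairing (Claim~\ref{clm1}) is used only in the remaining case where \emph{both} blocks are strong subdivisions of $\theta_{2,2,2}$.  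You should reorganize your case analysis around producing $2_{in}$ (not merely $3$-forcing) on the big block; the constructions for $\theta_{1,3,3}$ and especially $\theta_{2,2,4}$ at an interior $2$-vertex are nontrivial and need to be written out explicitly.
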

\begin{proof}
We begin by proving a series of claims.  The point of each is to construct a
$k$-forcing assingment for a 2-connected graph (that is either in $\Ggood$ or is
formed from $K_{3,3}$ by repeatedly subdividing a single edge incident to a
2-vertex).  For a cut-vertex $v$ and subgraphs $G_1$ and $G_2$, our goal is to
find list assingments $L_1$ for $G_1$ and $L_2$ for $G_2$ such that $L_1$ is
$a$-forcing and $L_2$ is $b$-forcing and $a+b\le 6$.  Using the approach
outlined after Definition~\ref{def1}, this allows us to show $G$ is not
(4,2)-choosable.  This approach succeeds unless $\B(G)$ is one of the
two exceptions in the statement of theorem.

\begin{clm}
\label{clm1}
If $H=\theta_{2,2,2}$ and $v,w\in V(H)$ with $v$ and $w$ non-adjacent,  then
$H$ has a 4-assignment that is $3_{in}$-forcing for $v$ and also has a
4-assignment that is $3_{out}$-forcing for $v$.  In one of these
assignments, at least one of the three allowable colorings of $v$ forces a
unique coloring of $w$.
\end{clm}
We will not need the second statement in the proof of the current lemma, but
will use it later on, and it is convenient to prove now.
The desired assignments are shown in Figures~\ref{figWW} and~\ref{figXX}.  By
symmetry, we can assume that
$v\in \{v_1,v_2\}$ (as shown in the figures), and thus $w\in\{w_1,w_2\}$ (as
in Figure~\ref{figXX}).  First consider the labeling $L$ in Figure~\ref{figWW}.
If $\vph(v_1)\in\{12,13,23\}$, then colors 4, 5, 6 are used on its neighbors,
so the final vertex has no coloring.  Thus $L$ is $3_{in}$-forcing for $v_1$.  Since
$L(v_1)=L(v_2)$, also $L$ is $3_{out}$-forcing for $v_2$.

Now consider the assignment $L$ in Figure~\ref{figXX}.  It is easy to check that 
this $L$ forces $\vph(v_1)\in \{13,14,34\}$.  This, in turn, forces
$\vph(v_2)\in\{24,23, 12\}$, since $L(v_1)=L(v_2)$.  So $L$ is
$3_{out}$-forcing for $v_1$ and $3_{in}$-forcing for $v_2$.  Further, 
$\vph(v_1)=13$ if and only if $\vph(v_2)=24$, and if $\vph(v_1)=13$, then
$\vph(w_2)=25$, and $\vph(w_1)=13$.
This finishes the proof of the first statement, and also proves the second statement.  
\sqed

\begin{figure}[!h]
\tikzstyle{unlabeledStyle}=[shape = circle, minimum size = 5pt, inner sep = 1.6pt, outer sep = 0pt, draw]
\centering
\begin{minipage}[b]{0.45\linewidth}
\centering
\begin{tikzpicture}[scale = 12]
\Vertex[style = unlabeledStyle, x = 0.600, y = 0.900, L = \footnotesize {$v_2$}]{v0}
\Vertex[style = unlabeledStyle, x = 0.600, y = 0.750, L = \footnotesize {}]{v1}
\Vertex[style = unlabeledStyle, x = 0.450, y = 0.750, L = \footnotesize {}]{v2}
\Vertex[style = unlabeledStyle, x = 0.750, y = 0.750, L = \footnotesize {$v_1$}]{v3}
\Vertex[style = unlabeledStyle, x = 0.600, y = 0.600, L = \footnotesize {}]{v4}
\Vertex[style = labeledStyle, x = 0.600, y = 0.950, L = \footnotesize {$1234$}]{v5}
\Vertex[style = labeledStyle, x = 0.600, y = 0.800, L = \footnotesize {$1236$}]{v6}
\Vertex[style = labeledStyle, x = 0.600, y = 0.550, L = \footnotesize {$1235$}]{v7}
\Vertex[style = labeledStyle, x = 0.820, y = 0.750, L = \footnotesize {$1234$}]{v8}
\Vertex[style = labeledStyle, x = 0.390, y = 0.750, L = \footnotesize {$1456$}]{v9}
\Edge[](v0)(v2)
\Edge[](v0)(v3)
\Edge[](v1)(v2)
\Edge[](v1)(v3)
\Edge[](v4)(v2)
\Edge[](v4)(v3)
\end{tikzpicture}
\caption{A 4-assignment that is $3_{in}$-forcing for $v_1$.\label{figWW}}
\end{minipage}
\begin{minipage}[b]{0.45\linewidth}
\centering
\begin{tikzpicture}[scale = 12]
\Vertex[style = unlabeledStyle, x = 0.600, y = 0.900, L = \footnotesize {$v_2$}]{v0}
\Vertex[style = unlabeledStyle, x = 0.600, y = 0.750, L = \footnotesize {}]{v1}
\Vertex[style = unlabeledStyle, x = 0.450, y = 0.750, L = \footnotesize {$w_1$}]{v2}
\Vertex[style = unlabeledStyle, x = 0.750, y = 0.750, L = \footnotesize {$v_1$}]{v3}
\Vertex[style = unlabeledStyle, x = 0.600, y = 0.600, L = \footnotesize {$w_2$}]{v4}
\Vertex[style = labeledStyle, x = 0.600, y = 0.950, L = \footnotesize {$1234$}]{v5}
\Vertex[style = labeledStyle, x = 0.600, y = 0.800, L = \footnotesize {$1245$}]{v6}
\Vertex[style = labeledStyle, x = 0.600, y = 0.550, L = \footnotesize {$1235$}]{v7}
\Vertex[style = labeledStyle, x = 0.820, y = 0.750, L = \footnotesize {$1234$}]{v8}
\Vertex[style = labeledStyle, x = 0.380, y = 0.750, L = \footnotesize {$1345$}]{v9}
\Edge[](v0)(v3)
\Edge[](v1)(v3)
\Edge[](v4)(v3)
\Edge[](v0)(v2)
\Edge[](v1)(v2)
\Edge[](v4)(v2)
\end{tikzpicture}
\caption{A 4-assignment that is $3_{out}$-forcing for $v_1$.\label{figXX}}
\end{minipage}
\end{figure}

\begin{clm}
\label{clm2}
If $H=K_{2,4}$ and $v\in V(H)$, then $H$ has a 4-assignment $L$ that is
$2_{in}$-forcing for $v$.
\end{clm}

Consider the 4-assignment $L$ shown in Figure~\ref{figUU}. 
By symmetry (between $r$ and $s$ and also between $t$, $u$, $v$, and $w$), it
suffices to show that $L$ is $2_{in}$-forcing for $r$, since $L(w)=L(r)$.  We
show that $L$ forces $\vph(r)\in \{14,24\}$.  If $\vph(r)\subseteq 123$, then
$u$, $v$, and $w$ use 6, 5, and 4, so we cannot extend the coloring to $s$.  If
$\vph(r)=34$, then $w$ and $t$ use 1, 5, and 6, so we again cannot color $s$. 
Thus, $\vph(r)\in\{14,24\}$, so $L$ is $2_{in}$-forcing for $r$, and also $w$,
which proves Claim~\ref{clm2}. \sqed

\begin{figure}[!h]
\centering
\begin{minipage}[b]{0.45\linewidth}
\centering
\begin{tikzpicture}[scale = 12]
\Vertex[style = unlabeledStyle, x = 0.550, y = 0.850, L = \footnotesize {$r$}]{v0}
\Vertex[style = unlabeledStyle, x = 0.600, y = 0.700, L = \footnotesize {$v$}]{v1}
\Vertex[style = unlabeledStyle, x = 0.500, y = 0.700, L = \footnotesize {$w$}]{v2}
\Vertex[style = unlabeledStyle, x = 0.700, y = 0.700, L = \footnotesize {$u$}]{v3}
\Vertex[style = unlabeledStyle, x = 0.800, y = 0.700, L = \footnotesize {$t$}]{v4}
\Vertex[style = unlabeledStyle, x = 0.750, y = 0.850, L = \footnotesize {$s$}]{v5}
\Vertex[style = labeledStyle, x = 0.550, y = 0.900, L = \footnotesize {$1234$}]{v6}
\Vertex[style = labeledStyle, x = 0.750, y = 0.900, L = \footnotesize {$1456$}]{v7}
\Vertex[style = labeledStyle, x = 0.500, y = 0.650, L = \footnotesize {$1234$}]{v8}
\Vertex[style = labeledStyle, x = 0.700, y = 0.650, L = \footnotesize {$1236$}]{v9}
\Vertex[style = labeledStyle, x = 0.600, y = 0.650, L = \footnotesize {$1235$}]{v10}
\Vertex[style = labeledStyle, x = 0.800, y = 0.650, L = \footnotesize {$3456$}]{v11}
\Edge[](v1)(v5)
\Edge[](v2)(v5)
\Edge[](v3)(v5)
\Edge[](v4)(v5)
\Edge[](v1)(v0)
\Edge[](v2)(v0)
\Edge[](v3)(v0)
\Edge[](v4)(v0)
\end{tikzpicture}
\caption{A 4-assignment that is $2_{in}$-forcing for $r$.\label{figUU}}
\end{minipage}
\begin{minipage}[b]{0.45\linewidth}
\centering
\begin{tikzpicture}[scale = 10]
\Vertex[style = unlabeledStyle, x = 0.450, y = 0.850, L = \footnotesize {$s$}]{v0}
\Vertex[style = unlabeledStyle, x = 0.600, y = 0.850, L = \footnotesize {$t$}]{v1}
\Vertex[style = unlabeledStyle, x = 0.750, y = 0.850, L = \footnotesize {$u$}]{v2}
\Vertex[style = unlabeledStyle, x = 0.750, y = 0.700, L = \footnotesize {$v$}]{v3}
\Vertex[style = unlabeledStyle, x = 0.600, y = 0.700, L = \footnotesize {$w$}]{v4}
\Vertex[style = unlabeledStyle, x = 0.450, y = 0.700, L = \footnotesize {$x$}]{v5}
\Vertex[style = labeledStyle, x = 0.450, y = 0.650, L = \footnotesize {$1345$}]{v6}
\Vertex[style = labeledStyle, x = 0.600, y = 0.650, L = \footnotesize {$1235$}]{v7}
\Vertex[style = labeledStyle, x = 0.750, y = 0.650, L = \footnotesize {$1234$}]{v8}
\Vertex[style = labeledStyle, x = 0.750, y = 0.900, L = \footnotesize {$1346$}]{v9}
\Vertex[style = labeledStyle, x = 0.600, y = 0.900, L = \footnotesize {$1256$}]{v10}
\Vertex[style = labeledStyle, x = 0.450, y = 0.900, L = \footnotesize {$1246$}]{v11}
\Edge[](v0)(v1)
\Edge[](v2)(v1)
\Edge[](v4)(v1)
\Edge[](v0)(v5)
\Edge[](v4)(v5)
\Edge[](v2)(v3)
\Edge[](v4)(v3)
\end{tikzpicture}
\caption{A 4-assignment that is $2_{in}$-forcing for $v$.\label{figVV}}
\end{minipage}
\end{figure}

\begin{clm}
\label{clm3}
If $H=\theta_{1,3,3}$ and $v\in V(H)$, then $H$ has a 4-assignment $L$
that is $2_{in}$-forcing for $v$.
\end{clm}

By symmetry, there are only two possibilities for $v$.  First suppose $d(v)=3$.
Let $w$ denote the other vertex such that $d(w)=3$.  By Lemma~\ref{C4-lem}, give a
4-assignment $L_1$ to one 4-cycle in $H$ such that $L_1(v)=L_1(w)=1234$ and $L_1$
forces $\vph(v)\notin\{12,13\}$.  Similarly, give a 4-assignment $L_2$ to the
other 4-cycle such that $L_2(v)=L_2(w)=1234$ and $L_2$ forces
$\vph(v)\notin\{14,24\}$.  Now $L_1\cup L_2$ forces $\vph(v)\in \{23,34\}$, so
is $2_{in}$-forcing for $v$.

Assume instead that $d(v)=2$.  Let $L$ be the 4-assignment in
Figure~\ref{figVV}.  
It is straightforward to check that $L$ forces
$\vph(v)\in\{14,24\}$, as follows.  If $\vph(v)=12$: $\vph(w)=35$, $\vph(x)=14$,
$\vph(s)=26$, $\vph(t)=\none$.  If $\vph(v)=13$: $\vph(w)=25$, $\vph(t)=16$,
$\vph(u)=\none$.  If $\vph(v)=23$: $\vph(w)=15$, $\vph(t)=26$, $\vph(s)=14$,
$\vph(x)=\none$.  If $\vph(v)=34$: $\vph(u)=16$, $\vph(t)=25$, $\vph(w)=\none$.
This proves Claim~\ref{clm3}. \sqed

\begin{clm}
\label{clm4}
If $H=\theta_{2,2,4}$ and $v\in V(H)$, then $H$ has a 4-assignment $L$
that is $2_{in}$-forcing for $v$.
\end{clm}

Consider $\theta_{2,2,4}$, shown in Figures~\ref{figAA} and~\ref{figBB}.
For any given vertex in $V(\theta_{2,2,4})$, we must construct a 4-assignment
$L$ such that $L$ is $2_{in}$-forcing for that vertex.  By symmetry, we assume
this vertex is $r$, $u$, $v$, or $w$.  Consider the list assignment $L$ in
Figure~\ref{figAA}.

If $\vph(w)=12$: $\vph(v)=34$, $\vph(u)=15$, $\vph(t)=23$, $\vph(s)=14$,
$\vph(r)=\none$.  If $\vph(w)=15$: $\vph(r)=24$, $\vph(s)=13$, $\vph(x)=\none$.
If $\vph(w)=25$: $\vph(r)=14$, $\vph(s)=23$, $\vph(t)=15$, $\vph(u)=34$,
$\vph(v)=\none$.  If $\vph(w)=45$: $\vph(x)=13$, $\vph(s)=24$, $\vph(r)=\none$.
Thus, $\vph(w)\in\{14,24\}$.  Since $\vph(r)=\vph(w)$, we get
$\vph(r)\in\{25,15\}$.  Also, $\vph(v)\in\{23,13\}$.  So $L$ is
$2_{in}$-forcing for $r$, $v$, and $w$.

Now consider the list assignment $L$ in Figure~\ref{figBB}.  We show that it is
$2_{in}$-forcing for $u$ (in fact we show that it is 1-forcing for $u$, but we
will not need this).
We begin by showing $\vph(u)\notin\{35,45,46\}$, which is straightforward.
If $\vph(u)=35$: $\vph(t)=14$, $\vph(s)=23$, $\vph(x)=15$, $\vph(w)=26$,
$\vph(v)=\none$.  If $\vph(u)=45$: $\vph(v)=26$, $\vph(w)=15$, $\vph(x)=23$,
$\vph(s)=14$, $\vph(t)=\none$.  If $\vph(u)=46$: $\vph(v)=25$, $\vph(w)=16$,
$\vph(r)=24$, $\vph(s)=13$, $\vph(t)=\none$.
Now we consider $\vph(u)\in \{34,56\}$.
If $\vph(u)=34$, then $\vph(t)=15$, so $1\notin\vph(s)$.  Also, $4\notin
\vph(v)$, so $1\in \vph(w)$, since otherwise $\vph(v)\cup \vph(w)\subseteq 256$,
a contradiction.  Thus, $1\notin \vph(r)$ and $1\notin \vph(x)$.
If $2\in \vph(s)$, then $\vph(r)=46$ and $\vph(x)=35$, so we cannot color $s$.
Thus, $2\notin \vph(s)$, so $\vph(s)=34$.  Now $\vph(r)=26$ and $\vph(x)=25$, so
we cannot color $w$, a contradiction.
If $\vph(u)=56$, then $\vph(v)=24$, so $2\notin\vph(w)$.  Also, $5\notin
\vph(t)$, so $2\in \vph(s)$, so $2\notin \vph(r)$ and $2\notin \vph(x)$.  
If $1\in \vph(w)$, then $\vph(r)=46$ and $\vph(x)=35$, so we cannot color $w$. 
Thus, $1\notin \vph(w)$, so $\vph(w)=56$.  Now $\vph(x)=13$ and $\vph(r)=14$,
so we cannot color $s$.  This finishes the proof of Claim~\ref{clm4}. \sqed
\bigskip

\begin{figure}[!h]
\centering
\begin{minipage}[b]{0.45\linewidth}
\centering
\begin{tikzpicture}[scale = 12]
\Vertex[style = unlabeledStyle, x = 0.450, y = 0.750, L = \footnotesize {$r$}]{v0}
\Vertex[style = unlabeledStyle, x = 0.600, y = 0.750, L = \footnotesize {$x$}]{v1}
\Vertex[style = unlabeledStyle, x = 0.600, y = 0.900, L = \footnotesize {$s$}]{v2}
\Vertex[style = unlabeledStyle, x = 0.750, y = 0.850, L = \footnotesize {$t$}]{v3}
\Vertex[style = unlabeledStyle, x = 0.750, y = 0.750, L = \footnotesize {$u$}]{v4}
\Vertex[style = unlabeledStyle, x = 0.750, y = 0.650, L = \footnotesize {$v$}]{v5}
\Vertex[style = unlabeledStyle, x = 0.600, y = 0.600, L = \footnotesize {$w$}]{v6}
\Vertex[style = labeledStyle, x = 0.600, y = 0.950, L = \footnotesize {$1234$}]{v7}
\Vertex[style = labeledStyle, x = 0.810, y = 0.850, L = \footnotesize {$1235$}]{v8}
\Vertex[style = labeledStyle, x = 0.810, y = 0.750, L = \footnotesize {$1345$}]{v9}
\Vertex[style = labeledStyle, x = 0.810, y = 0.650, L = \footnotesize {$1234$}]{v10}
\Vertex[style = labeledStyle, x = 0.600, y = 0.550, L = \footnotesize {$1245$}]{v11}
\Vertex[style = labeledStyle, x = 0.385, y = 0.750, L = \footnotesize {$1245$}]{v12}
\Vertex[style = labeledStyle, x = 0.660, y = 0.750, L = \footnotesize {$1345$}]{v13}
\Edge[](v2)(v1)
\Edge[](v6)(v1)
\Edge[](v2)(v0)
\Edge[](v6)(v0)
\Edge[](v2)(v3)
\Edge[](v4)(v3)
\Edge[](v4)(v5)
\Edge[](v6)(v5)
\end{tikzpicture}
\caption{A 4-assignment that is $2_{in}$-forcing for $r$, $v$, and $w$.\label{figAA}}
\end{minipage}
\begin{minipage}[b]{0.45\linewidth}
\centering
\begin{tikzpicture}[scale = 12]
\Vertex[style = unlabeledStyle, x = 0.450, y = 0.750, L = \footnotesize {$r$}]{v0}
\Vertex[style = unlabeledStyle, x = 0.600, y = 0.750, L = \footnotesize {$x$}]{v1}
\Vertex[style = unlabeledStyle, x = 0.600, y = 0.900, L = \footnotesize {$s$}]{v2}
\Vertex[style = unlabeledStyle, x = 0.750, y = 0.850, L = \footnotesize {$t$}]{v3}
\Vertex[style = unlabeledStyle, x = 0.750, y = 0.750, L = \footnotesize {$u$}]{v4}
\Vertex[style = unlabeledStyle, x = 0.750, y = 0.650, L = \footnotesize {$v$}]{v5}
\Vertex[style = unlabeledStyle, x = 0.600, y = 0.600, L = \footnotesize {$w$}]{v6}
\Vertex[style = labeledStyle, x = 0.600, y = 0.950, L = \footnotesize {$1234$}]{v7}
\Vertex[style = labeledStyle, x = 0.810, y = 0.850, L = \footnotesize {$1345$}]{v8}
\Vertex[style = labeledStyle, x = 0.810, y = 0.750, L = \footnotesize {$3456$}]{v9}
\Vertex[style = labeledStyle, x = 0.810, y = 0.650, L = \footnotesize {$2456$}]{v10}
\Vertex[style = labeledStyle, x = 0.600, y = 0.550, L = \footnotesize {$1256$}]{v11}
\Vertex[style = labeledStyle, x = 0.385, y = 0.750, L = \footnotesize {$1246$}]{v12}
\Vertex[style = labeledStyle, x = 0.660, y = 0.750, L = \footnotesize {$1235$}]{v13}
\Edge[](v2)(v0)
\Edge[](v2)(v1)
\Edge[](v2)(v3)
\Edge[](v4)(v3)
\Edge[](v4)(v5)
\Edge[](v6)(v0)
\Edge[](v6)(v1)
\Edge[](v6)(v5)
\end{tikzpicture}
\caption{A 4-assignment that is $2_{in}$-forcing for $u$.\label{figBB}}
\end{minipage}
\end{figure}

Suppose $G$ is $(4,2)$-choosable.  By Theorem~\ref{BG1-lem}, we assume each
block in $\B(G)$ is either (i) $C_{2s}$, (ii) $\theta_{2,2s,2t}$, (iii)
$\theta_{1,2s+1,2t+1}$, (iv) $\theta_{2,2,2,2}$ (that is, $K_{2,4}$) or (v) a
strong subdivision of $\Gmixed$.  Since every instance of (v) contains as a
subgraph an instance of (iii), we need not consider (v).

Suppose that one block, say $B_1$, of $G$ is $K_{2,4}$.   By the Strong Minor
Lemma, it suffices to consider the case that the other block, $B_2$ is an even
cycle.  Let $B_2'$ denote the subgraph of $G$ consisting of $B_2$ and the path
(possibly of length 0) from $B_2$ to $B_1$.  Let $\{v\}=V(B_1)\cap V(B_2')$. 
By Lemma~\ref{C4-lem}, subgraph $B_2'$ has a 4-assignment $L_1$ that is
$4_{out}$-forcing on $v$.  By symmetry, assume that $L_1(v)=1234$ and $L_1$
forces $\vph(v)\notin\{12,13\}$.  By
Claim~\ref{clm2}, block $B_1$ has a 4-assignment $L_2$ that is $2_{in}$-forcing on
$v$.  By permuting colors in $L_2$, we can assume that $L_2(v)=1234$ and $L_2$
forces $\vph(v)\in \{12,13\}$.  Hence, $L_1\cup L_2$ is a 4-assignment $L$ for
$G$ such that $G$ has no 2-fold $L$-coloring.  Thus, $G$ is not (4,2)-choosable,
a contradiction.  So no block of $G$ is $K_{2,4}$.

Suppose that one block, say $B_1$, of $G$ is $\theta_{1,2s+1,2t+1}$.  It
suffices to consider the case when $B_1=\theta_{1,3,3}$.  Let $v$ be the
cut-vertex of $G$ in $B_1$.  By Claim~\ref{clm3}, $B_1$ has a 4-assignment $L_1$
that is $2_{in}$-forcing for $v$.  By symmetry, assume that $L_1(v)=1234$ and
$L_1$ forces $\vph(v)\in\{12,13\}$.  Let $B_2$ be the other block in $\B(G)$ and
let $B_2'$ consist of $B_2$ and the path from $B_2$ to $v$.  By
Lemma~\ref{C4-lem},
$B_2'$ has a 4-assignment $L_2$ that is $4_{out}$-forcing for $v$.  By permuting
colors in $L_2$, we can ensure that $L_2(v)=1234$ and that $L_2$ forces
$\vph(v)\notin\{12,13\}$.  Let $L=L_1\cup L_2$.  Since $G$ has no 2-fold
$L$-coloring, we conclude that $G$ is not $(4,2)$-choosable, a contradiction.
So no block of $G$ is $\theta_{1,2s+1,2t+1}$.

Suppose that both blocks in $\B(G)$ are strong subdivisions of $\theta_{2,2,2}$, and let
$v$ be a cut-vertex in one block, say $B_1$.  By Claim~\ref{clm1}, block $B_1$
has a 4-assignment $L_1$ that is $3_{in}$-forcing for $v$, and such that
$L_1(v)=1234$.  Further, by symmetry, we assume that $L_1$ forces
$\vph(v)\in\{12,13,14\}$.  Let $B_2$ denote the other block in $\B(G)$, and let
$B_2'$ consist of $B_2$ and the path from $B_2$ to $v$.  By Claim~\ref{clm1}
and Proposition~\ref{obs1}, subgraph $B_2'$ has a 4-assignment $L_2$ such that
$L_2(v)=1234$ and $L_2$ is $3_{out}$-forcing for $v$, requiring that
$\vph(v)\notin\{12,13,14\}$.  Now $L_1\cup L_2$ is a 4-assignment such that $G$
has no 2-fold $L$-coloring.  Thus, $G$ is not $(4,2)$-choosable, a contradiction.  

Finally, suppose that $B_1=C_{2s}$ and $B_2$ is a strong subdivision of
$\theta_{2,2,4}$.  By the Strong Minor Lemma, we assume that $B_2=\theta_{2,2,4}$.
Let $v$ be the cut-vertex in $B_2$, and let $B_1'$ consist of $B_1$ and the path
from $B_1$ to $v$.  By Lemma~\ref{C4-lem}, subgraph $B_1'$ has a 4-assignment $L_1$ that
is $4_{out}$-forcing for $v$; by symmetry we assume that $L_1(v)=1234$ and $L_1$
forces $\vph(v)\notin\{12,13\}$.  By Claim~\ref{clm4}, block $B_2$ has a
4-assignment $L_2$ that is $2_{in}$-forcing for $v$; by symmetry we assume that
$L_2(v)=1234$ and that $L_2$ forces $\vph(v)\in\{12,13\}$.  Let $L=L_1\cup L_2$.
Since $G$ has no 2-fold $L$-coloring, we conclude that $G$ is not
(4,2)-choosable, a contradiction.

Thus either $\B(G)=\{C_{2s},C_{2t}\}$ or $\B(G)=\{C_{2s},\theta_{2,2,2}\}$.
\end{proof}


\section{$\card{\B(G)}\ge3$}
\label{BG3-sec}

\begin{lem} Let $G$ be (4,2)-choosable with $\delta\ge 2$. 
Now $\card{\B(G)}\le 3$.  Further, if $\card{\B(G)}=3$, then each block of $G$
is a cycle, $G$ has a path $P$ that contains at least one edge in each block
of $\B(G)$, the block of $\B(G)$ that appears second along $P$ is $C_4$, and the
cut-vertices in this $C_4$ are non-adjacent (see Figure~\ref{figFF}).
\label{BG3-lem}
\end{lem}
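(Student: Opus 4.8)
The plan is to study the block--cut tree of $G$, peeling off configurations that are already known, or quickly checked, to not be $(4,2)$-choosable. I use three soft facts throughout. (a) $(4,2)$-choosability passes to subgraphs: extend a $4$-assignment on a subgraph arbitrarily, take a $2$-fold coloring of the supergraph, and restrict. (b) The reductions used in the Strong Minor Lemma preserve the existence of a $k$-forcing $4$-assignment for any surviving vertex; the proof is verbatim that of the Strong Minor Lemma. (c) A cut-edge of $G$ may be contracted without changing $(4,2)$-choosability or $\B(G)$ (the fact promised just before Proposition~\ref{obs1}). By (c) I may assume $G$ has no cut-edge, so every block of $G$ is $2$-connected on at least three vertices and hence contains a cycle; thus $\B(G)$ is the set of all blocks of $G$, and $\card{\B(G)}$ equals the number of blocks. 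Assume now that $G$ is $(4,2)$-choosable, $\delta\ge 2$, and $\card{\B(G)}\ge 3$; I will show $\card{\B(G)}=3$ together with the stated structure.

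Two reductions shrink the problem. First, every block is $C_{2s}$ or $\theta_{2,2,2}$: a block $B$ shares a vertex $v$ with another block $B'$, and choosing a cycle $C'$ of $B'$ through $v$, the subgraph $B\cup C'$ has exactly two blocks, hence is $(4,2)$-choosable by (a), hence $\{B,C'\}\in\{\{C_{2s},C_{2t}\},\{C_{2s},\theta_{2,2,2}\}\}$ by Lemma~\ref{BG2-lem}; since $C'$ is a cycle, $B\in\{C_{2s},\theta_{2,2,2}\}$. Second, every cut-vertex lies in exactly two blocks: if $v$ lay in distinct blocks $B_1,B_2,B_3$, choose a cycle $C_i$ of $B_i$ through $v$ (these meet pairwise only in $v$); each $C_i$ is an even cycle through $v$, so by Lemma~\ref{C4-lem} and (b) --- reducing $C_i$ to a $4$-cycle at $v$ --- it has a $4_{out}$-forcing $4$-assignment for $v$ which, by permuting colors, forbids any prescribed pair of colorings of $v$ that share a color. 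Arrange the forbidden pairs to be $\{12,13\}$ on $C_1$, $\{23,24\}$ on $C_2$, $\{14,34\}$ on $C_3$, with arbitrary lists elsewhere; then all six $2$-subsets of $L(v)$ are forbidden, so $G$ has no $2$-fold coloring --- a contradiction. In the block--cut tree, then, every cut-vertex node has degree $2$; so if additionally every block node has degree at most $2$, the tree is a path $B_1-c_1-B_2-\cdots-c_{k-1}-B_k$ with $k=\card{\B(G)}$.

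The crux is the case where the block tree is the path $B_1-c_1-B_2-c_2-B_3$: here $c_1,c_2$ are distinct vertices of $B_2$, and I claim that $(4,2)$-choosability forces $B_2=C_4$ with $c_1,c_2$ non-adjacent, and $B_1,B_3$ cycles; the required path $P$ is then a $c_1$--$c_2$ path inside $B_2$, extended by a path from $c_1$ into $B_1$ and one from $c_2$ into $B_3$, each using an edge of its block, and $B_2$ is the block of $\B(G)$ met second along $P$. I would prove this by contraposition, exhibiting in each ``bad'' sub-case a $4$-assignment with no $2$-fold coloring. When $B_2=\theta_{2,2,2}$, or $B_1$ or $B_3$ equals $\theta_{2,2,2}$, split at $c_1$ or $c_2$ and combine an $a$-forcing $4$-assignment on one side with a $b$-forcing $4$-assignment on the other, with $a+b\le 6$ and incompatible forced colorings; here Claim~\ref{clm1}, Lemma~\ref{C4-lem}, and the observation that a two-block graph whose blocks are an even cycle and $\theta_{2,2,2}$ has both a $3_{in}$- and a $3_{out}$-forcing $4$-assignment for each of its vertices (propagate the assignments of Claim~\ref{clm1} around the cycle with constant list $1234$) supply enough forcing power. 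When $B_2$ is a cycle other than $C_4$, or $B_2=C_4$ with $c_1,c_2$ adjacent, the clean forcing split is unavailable --- each side has two blocks, both even cycles, which by Corollary~\ref{barbell-cor} has no $3$-forcing assignment --- so one exhibits the $4$-assignment directly: for $B_2=C_4$ with $c_1,c_2$ adjacent, first reduce the two end cycles to $4$-cycles, landing on a bounded-size graph checked not $(4,2)$-choosable by the method of Lemma~\ref{lem-cycles-bad} (then apply the Strong Minor Lemma); and for $B_2=C_{2s}$ with $s\ge 3$, build the assignment on $B_2$ itself, using its extra length to pin down the relation between $\vph(c_1)$ and $\vph(c_2)$.

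Granting the three-block case, $\card{\B(G)}\le 3$ follows. If a block node $B$ has degree $\ge 3$, it carries three cut-vertices $v_1,v_2,v_3$, each also lying in one further block; choosing a cycle $C_i$ of that block through $v_i$, the subgraph $C_i\cup B\cup C_j$ realizes the three-block case with middle block $B$, forcing $B=C_4$ with $v_i,v_j$ non-adjacent for every $i\ne j$; thus $\{v_1,v_2,v_3\}$ is independent in $C_4$, contradicting $\alpha(C_4)=2$. If instead the block tree is a path $B_1-c_1-B_2-c_2-B_3-c_3-B_4-\cdots$ with $k\ge 4$, applying the three-block case to $B_1\cup B_2\cup B_3$ and to $B_2\cup B_3\cup B_4$ forces $B_2$ and $B_3$ each to be a $4$-cycle with its two cut-vertices non-adjacent; reducing the two end cycles to $4$-cycles, one obtains a bounded-size ``chain of four $4$-cycles'' which a direct check shows is not $(4,2)$-choosable, whence neither is $G$. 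I expect the main obstacle to be exactly the contrapositive argument in the three-block case: there are a handful of shapes to dispatch, and the ones that resist the forcing-split method --- $B_2=C_4$ with adjacent cut-vertices, and $B_2$ a longer even cycle --- require bespoke $4$-assignments rather than an appeal to a small strong minor.
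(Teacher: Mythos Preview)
Your plan tracks the paper's proof closely: restrict blocks to $\{C_{2s},\theta_{2,2,2}\}$ via Lemma~\ref{BG2-lem}, show no cut-vertex lies in three blocks using three $4_{out}$-forcing lollipops, reduce the block--cut tree to a path, and then analyze the middle block(s). Your bootstrap---first proving the three-block case, then invoking it on subgraphs $C_i\cup B\cup C_j$ to force $\alpha(C_4)\ge 3$ when a block carries three cut-vertices---is a clean alternative to the paper's tree argument inside such a block.

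There is, however, a real gap in your handling of the three-block case when $B_2=\theta_{2,2,2}$ and $B_1,B_3$ are both cycles. Your scheme is a two-sided split at $c_1$ or $c_2$ with $a+b\le 6$, but here neither split achieves that: the single-cycle side has only $4_{out}$-forcing (Lemma~\ref{C4-lem}), and the other side (a cycle plus $\theta_{2,2,2}$) has only $3$-forcing by your own observation, so $a+b=7$ and the allowed sets always overlap. (Your appeal to Corollary~\ref{barbell-cor} here is also a misreading: that corollary concerns cores with a \emph{single} even cycle, not two, and in fact the paper later \emph{does} get $3_{out}$-forcing on $c_2$ from two $4$-cycles glued at $c_1$.) The paper overcomes this with a genuinely three-way argument that your framework does not capture: it uses the \emph{second} statement of Claim~\ref{clm1}---that for one of the three admissible colorings of $c_1$ in $\theta_{2,2,2}$, the coloring of $c_2$ is uniquely determined---together with $4_{out}$-forcing from $B_1$ (eliminating two of the three options at $c_1$) and $4_{out}$-forcing from $B_3$ (eliminating the forced value at $c_2$). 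The same subtlety recurs when $B_3=\theta_{2,2,2}$: the paper treats that only \emph{after} establishing $B_2=C_4$, so that $B_1\cup B_2$ can be made $3_{out}$-forcing for $c_2$ to match $\theta_{2,2,2}$'s $3_{in}$-forcing. In short, the ingredients you cite are the right ones, but ``$a+b\le 6$'' is not enough glue; you need the refined information in Claim~\ref{clm1} about how one allowed coloring at $c_1$ pins down $c_2$.
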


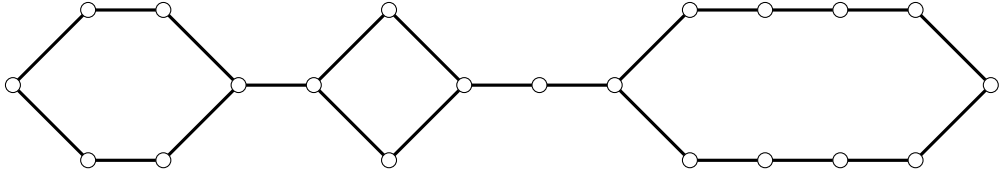
\begin{figure}
\centering
\begin{tikzpicture}[scale = 10]
\Vertex[style = unlabeledStyle, x = 0.150, y = 0.800, L = \small {}]{v0}
\Vertex[style = unlabeledStyle, x = 0.050, y = 0.700, L = \small {}]{v1}
\Vertex[style = unlabeledStyle, x = 0.150, y = 0.600, L = \small {}]{v2}
\Vertex[style = unlabeledStyle, x = 0.250, y = 0.600, L = \small {}]{v3}
\Vertex[style = unlabeledStyle, x = 0.350, y = 0.700, L = \small {}]{v4}
\Vertex[style = unlabeledStyle, x = 0.250, y = 0.800, L = \small {}]{v5}
\Vertex[style = unlabeledStyle, x = 0.450, y = 0.700, L = \small {}]{v6}
\Vertex[style = unlabeledStyle, x = 0.550, y = 0.800, L = \small {}]{v7}
\Vertex[style = unlabeledStyle, x = 0.650, y = 0.700, L = \small {}]{v8}
\Vertex[style = unlabeledStyle, x = 0.550, y = 0.600, L = \small {}]{v9}
\Vertex[style = unlabeledStyle, x = 0.750, y = 0.700, L = \small {}]{v10}
\Vertex[style = unlabeledStyle, x = 0.850, y = 0.700, L = \small {}]{v11}
\Vertex[style = unlabeledStyle, x = 0.950, y = 0.800, L = \small {}]{v12}
\Vertex[style = unlabeledStyle, x = 1.050, y = 0.800, L = \small {}]{v13}
\Vertex[style = unlabeledStyle, x = 1.150, y = 0.800, L = \small {}]{v14}
\Vertex[style = unlabeledStyle, x = 1.250, y = 0.800, L = \small {}]{v15}
\Vertex[style = unlabeledStyle, x = 1.350, y = 0.700, L = \small {}]{v16}
\Vertex[style = unlabeledStyle, x = 1.250, y = 0.600, L = \small {}]{v17}
\Vertex[style = unlabeledStyle, x = 1.150, y = 0.600, L = \small {}]{v18}
\Vertex[style = unlabeledStyle, x = 1.050, y = 0.600, L = \small {}]{v19}
\Vertex[style = unlabeledStyle, x = 0.950, y = 0.600, L = \small {}]{v20}
\Edge[](v15)(v16)
\Edge[](v17)(v16)
\Edge[](v13)(v14)
\Edge[](v15)(v14)
\Edge[](v17)(v18)
\Edge[](v19)(v18)
\Edge[](v11)(v20)
\Edge[](v19)(v20)
\Edge[](v11)(v12)
\Edge[](v13)(v12)
\Edge[](v8)(v10)
\Edge[](v11)(v10)
\Edge[](v7)(v8)
\Edge[](v9)(v8)
\Edge[](v7)(v6)
\Edge[](v9)(v6)
\Edge[](v5)(v4)
\Edge[](v6)(v4)
\Edge[](v3)(v4)
\Edge[](v1)(v0)
\Edge[](v5)(v0)
\Edge[](v3)(v2)
\Edge[](v2)(v1)
\end{tikzpicture}
\caption{An example of the case allowable in Lemma~\ref{BG3-lem}.\label{figFF}}
\end{figure}

\begin{proof}
By Lemma~\ref{BG2-lem}, we can assume that every block in $\B(G)$ is either an even
cycle or $\theta_{2,2,2}$ (since otherwise $G$ contains a subgraph that is not
(4,2)-choosable).  Suppose some $v\in V(G)$ is in at least three
distinct blocks of $G$.  Let $H'_1, H'_2, H'_3$ be three components of $G-v$ and
let $H_i$ be the subgraph induced by $\{v\}\cup V(H'_i)$, for each $i$.
Since $\delta\ge 2$, each $H_i$ contains a cycle with an incident path (possibly
length 0) ending at $v$.  By Lemma~\ref{C4-lem} and Proposition~\ref{obs1}, we
give each $H_i$ a 4-assignment $L_i$ that is $4_{out}$-forcing for $v$.
By permuting colors in these assignments, we get that $L_i(v)=1234$ for each $i$
and that assignment $L_1$ forces $\vph(v)\notin \{12,13\}$, assignment $L_2$
forces $\vph(v)\notin \{23,24\}$, and assignment $L_3$ forces $\vph(v)\notin
\{14,24\}$.  Thus, $G$ is not (4,2)-choosable, a contradiction.

Now suppose that $G$ contains a block $B_0$ with vertices $v_1, v_2, v_3$ and
each $v_i$ is in a block $B_i$ distinct from $B_0$.  Let $T$ be a tree in $B_0$
that contains $v_1,v_2,v_3$ and such that the only leaves of $T$ are in the set
$\{v_1,v_2,v_3\}$.  Delete from $B_0$ all edges except those of $T$; call the
resulting graph $G'$.  (Note that $\delta(G')\ge 2$.)  If $v_1, v_2, v_3$
appear on a path in $G'$, then (by symmetry) $v_2$ appears second along the
path, so $v_2$ is in at least three distinct blocks in $G'$.  Now $G'$ (and
hence $G$) is not (4,2)-choosable, as above.  So assume instead that $v_1, v_2,
v_3$ do not appear on a path in $G'$;
hence, there exists a vertex $w$ such that $v_1, v_2, v_3$ are in distinct
components of $G'-w$.  Thus, $w$ is in at least three distinct blocks in $G'$.
So again, $G'$ (and, thus, $G$) is not (4,2)-choosable.

The previous two paragraphs show that each vertex of $G$ is in at most two
blocks and that for each block at most two of its vertices appear in two blocks.
Thus, there exists a path $P$ that contains an edge of every block in $\B(G)$.

Suppose that $\card{\B(G)}\ge 4$.  By the Strong Minor Lemma, it suffices to
consider the case when each block $B_i\in \B(G)$ is a 4-cycle; assume they
appear in the order $B_1, B_2, B_3, B_4$ along $P$.  First, suppose that $B_2$
 (or $B_3$, by symmetry) has cut-vertices that are adjacent; call these vertices
$v$ and $w$.  Let $B_1'$ consist of $B_1$ and the path from $B_1$ to $B_2$; say
$v$ is the leaf in $B_1'$.  Similarly, let $B_3'$ denote $B_3$ and the path from
$B_3$ to $B_2$; note that $w$ is the leaf in $B_3'$.  By Lemma~\ref{C4-lem}, we give
$B_1'$ a 4-assignment that is $4_{out}$-forcing for $v$; by symmetry, assume
that $L_1(v)=1234$ and $L_1$ forces $\vph(v)\notin\{12,13\}$.  Similarly, we
give $B_2$ a 4-assignment $L_2$ with $L(v)=L(w)=1234$; by permuting colors in
$L_2$, we can require that $L_2$ forces $\vph(v)\notin\{14,24\}$.  Together
$L_1$ and $L_2$ force $\vph(v)\in\{23,34\}$.  Since $L_2(w)=L_2(v)$, also $L_1$
and $L_2$ force $\vph(w)\in\{14,12\}$.  Now we give $B_3'$ a $4_{out}$-forcing
assignment $L_3$ with $L_3(w)=1234$ and such that $L_3$ forces
$\vph(w)\notin\{12,14\}$.  Let $L=L_1\cup L_2\cup L_3$.  Since $G$ has no 2-fold
$L$-coloring, we conclude that $G$ is not (4,2)-choosable, a contradiction.

Now assume that the cut-vertices in $B_2$ (resp.~$B_3$) are non-adjacent; call
them $v_2$ and $w_2$ (resp.~$v_3$ and $w_3$).  Define $B_1'$ and $B_3'$ as
above, and define $B_4'$ analogously.  By Lemma~\ref{C4-lem}, we give a 4-assignment
to $B_1'\cup B_2$ that is $2_{comp}$-forcing for $v_2$.  This uses an assignment
$L_1$ for $B_1'$ that forces $\vph(v_2)\notin\{24,34\}$ and an assignment $L_2$
for $B_2$ that forces $\vph(v_2)\notin\{12,13\}$ (we also require
$L_1(v_2)=L_2(v_2)=1234$).  So, $L_1\cup L_2$ forces $\vph(v_2)\in \{14,23\}$.
It is easy to check that this forces $\vph(w_2)\in\{23,45\}$.  Thus, $L_1\cup
L_2$ is $2_{comp}$-forcing for $w_2$.  Similarly, we construct an
assignment for $B_3\cup B_4'$ that is $2_{comp}$-forcing for $v_3$.  By
Proposition~\ref{obs1}, we extend this to an assignment for $B_3'\cup B_4'$ that
is $2_{comp}$-forcing for $w_2$.  By permuting colors in the lists $B_3'\cup
B_4'$, we can ensure that $B_3'(w_2)=2345$ and that $B_3'\cup B_4$ forces
$\vph(w_2)\notin\{23,45\}$.  Thus, $G$ is not (4,2)-choosable, a contradiction.

Now assume that $\card{\B(G)}=3$ and that the second block along $P$, call
it $B_2$, is not $C_4$.  First suppose that $B_2$ is an even cycle of length at
least 6. If the two cut-vertices in $B_2$ are an even distance apart, then $G$
contains a strong minor of a graph in which a single vertex lies in three
edge-disjoint cycles.  As shown above, $G$ is not (4,2)-choosable.  If the two
cut-vertices in $B_2$ are an odd distance apart, then $G$ contains a strong minor
of a graph $G'$ with $\B(G')=\{C_4,C_4,C_4\}$, such that the middle $C_4$ has
cut-vertices that are adjacent.  Again, $G$ is not (4,2)-choosable, as shown
above.

Now assume that $B_2=\theta_{2,2,2}$.  Let $v$ and $w$ denote the cut-vertices in
$B_2$.  As shown above, we can assume $v$ and $w$ are non-adjacent (since every
pair of vertices in $\theta_{2,2,2}$ lie on a 4-cycle).  Define $B_1'$ and $B_3'$ as
above.  We give $B_2$ the 4-assignment $L_2$ shown in Figure~\ref{figXX}.
By Claim~\ref{clm1} of Lemma~\ref{BG2-lem}, assignment $L_2$ is
$3_{out}$-forcing for $v_1$ and $3_{in}$-forcing for $v_2$.  First suppose that
the cut-vertex $v$ is $v_1$ in
the figure, so $w$ is $w_1$.  By Lemma~\ref{C4-lem}, we give $B_1'$ a 4-assignment
$L_1$ that forces $\vph(v_1)\notin\{14,34\}$.  Similarly, we give $B_3'$ a
4-assignment $L_3$ that forces $\vph(w_1)\ne13$.  Let $L=L_1\cup L_2\cup L_3$.
Note that $G$ has no 2-fold $L$-coloring, as follows.  Assignment $L_2$ forces
$\vph(v_1)\in\{13,14,34\}$.  However, $L_1$ forces $\vph(v_1)\notin\{14,34\}$.
Thus, $\vph(v_1)=13$.  By Claim~\ref{clm1}, this implies that
$\vph(w_1)=13$.  But $L_3$ forces $\vph(w_1)\ne 13$, a contradiction.

Now instead assume that the cut-vertex $v$ is $v_2$ in Figure~\ref{figXX}, so
$w$ is $w_2$.  Now, similar to the previous case, we give $B_1'$ a 4-assignment
that forces $\vph(v_2)\notin\{12,23\}$.  Since $L_2$ forces $\vph(v_2)\in
\{12,23,24\}$, note that $L_1\cup L_2$ forces $\vph(v_2)=24$.  By
Claim~\ref{clm1}, this forces $\vph(w_2)=25$.  So we give $B_3'$ a 4-assignment
that forces $\vph(w_2)\ne 25$.  Thus, $G$ has no 2-fold $(L_1\cup L_2\cup
L_3)$-coloring, a contradiction.  This concludes the case that $B_2\ne C_4$.

Now suppose that $B_1$ (or $B_3$, by symmetry) is not an even cycle.  It
suffices to consider the case when $B_1$ and $B_2$ are 4-cycles and
$B_3=\theta_{2,2,2}$.  Define $B_1'$ and $B_3'$ as above.  Again, let $v$ and
$w$ be the cut-vertices in $B_2$, and $v$ be the leaf in $B_1'$.  Give $B_2$ a
4-assignment $L_2$ that forces $\vph(v)\notin\{12,13\}$.  Similarly, give $B_1'$
a 4-assignment $L_1$ that forces $\vph(v)\notin\{23,24\}$.  So $L_1\cup L_2$
forces $\vph(v)\in \{14,34\}$.  Let $x$ denote the neighbor of $v$ in $B_2$ with
$L_2(x)=L_2(v)$.  Now $\vph(v)\in\{14,34\}$ forces $\vph(x)\in\{23,12\}$.  Thus
$2\notin \vph(w)$.  Hence, $L_1\cup L_2$ is $3_{out}$-forcing for $w$.  Now the
first statement in Claim~\ref{clm1} (together with Proposition~\ref{obs1}),
allow us to give $B_3'$ a 4-assignment $L_3$ that is $3_{in}$-forcing for $w$.
By permuting colors in $L_3$, we can require that $L_3(w)=2345$ and $L_3$ forces
$2\in \vph(w)$.  Thus, $G$ has no 2-fold $(L_1\cup L_2\cup L_3)$-coloring, which
is a contradiction.  This concludes the proof.
\end{proof}

\begin{thm}
\label{necessity-thm}
If a connected graph is (4,2)-choosable,
then either its core is one of the following six types (where $s$ and $t$ are
positive integers):
(i) $K_1$,
(ii) $C_{2s}$,
(iii) $\theta_{2,2s,2t}$, 
(iv) $\theta_{1,2s+1,2t+1}$,
(v) $K_{2,4}$,
(vi) a graph formed from $K_{3,3}-e$ by subdividing a single edge incident to a
vertex of degree 2 an even number of times, or else 
(vii) $\B(G)=\{C_{2s}, C_{2t}\}$,
(viii) $\B(G)=\{\theta_{2,2,2},C_{2s}\}$, or
(ix) $\B(G)=\{C_4, C_{2s}, C_{2t}\}$, where the $C_4$ appears second on a path
passing through all three blocks, and the two cut-vertices in $C_4$ are
non-adjacent. 
\end{thm}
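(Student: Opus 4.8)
The plan is to reduce to the core and then case on $\card{\B(G)}$, at which point each case is dispatched by one of the structural results already established. By Proposition~\ref{prop1}, $G$ is (4,2)-choosable if and only if $G'\DefinedAs\core(G)$ is, and since every (4,2)-choosable graph is bipartite, both $G$ and $G'$ are bipartite. If $G'$ has no edge then $G'=K_1$ (as $G$ is connected), which is case (i). Otherwise $G'$ is a connected graph on at least two vertices with no $1$-vertex, so $\delta(G')\ge 2$; in particular $G'$ contains a cycle and $\card{\B(G')}\ge 1$. Since deleting a $1$-vertex destroys only a $K_2$-block, $\B(G)=\B(G')$. By Lemma~\ref{BG3-lem}, $\card{\B(G')}\in\{1,2,3\}$, and I would treat these three cases in turn.

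When $\card{\B(G')}=1$, the first step is to observe that $G'$ must be $2$-connected. Indeed, in the block--cut tree of $G'$, every leaf block $B$ has at most one cut-vertex, so all but at most one vertex of $B$ has its full $G'$-degree inside $B$; since $\delta(G')\ge 2$ this forces $\card{V(B)}\ge 3$, so $B\ne K_2$ and hence $B\in\B(G')$. As $\B(G')$ has a single member, the block--cut tree has only one leaf, hence a single vertex, so $G'$ is $2$-connected. Now Theorem~\ref{BG1-lem} applies: since $G'$ is (4,2)-choosable, either $G'\in\Ggood=\{C_{2s},\theta_{2,2s,2t},\theta_{1,2s+1,2t+1},\theta_{2,2,2,2}\}$, which (using $\theta_{2,2,2,2}=K_{2,4}$) is exactly cases (ii)--(v), or $G'$ is obtained from $\Gmixed=K_{3,3}-e$ by repeatedly subdividing a single fixed edge incident to a $2$-vertex. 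In the latter case, bipartiteness of $G'$ forces the number of subdivisions to be even: such an edge joins the two parts of $K_{3,3}-e$, so an odd number of subdivisions would create an odd cycle. This is precisely case (vi).

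When $\card{\B(G')}=2$, I would invoke Lemma~\ref{BG2-lem} (applicable since $\delta(G')\ge 2$), which says that either $G'$ is not (4,2)-choosable---impossible---or $\B(G')$ equals $\{C_{2s},C_{2t}\}$ or $\{C_{2s},\theta_{2,2,2}\}$, which are cases (vii) and (viii). When $\card{\B(G')}=3$, Lemma~\ref{BG3-lem} gives that every block in $\B(G')$ is a cycle, that $G'$ has a path meeting all three blocks, that the block appearing second along it is $C_4$, and that the two cut-vertices of this $C_4$ are non-adjacent; since $G'$ is bipartite the three cycle-blocks are even, so $\B(G')=\{C_4,C_{2s},C_{2t}\}$ and we are in case (ix). This exhausts all cases.

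Since essentially all of the work is carried by Proposition~\ref{prop1} and Lemmas~\ref{BG2-lem} and~\ref{BG3-lem} together with Theorem~\ref{BG1-lem}, this result is really just their assembly, and I do not expect a genuine obstacle. The only points requiring a little care are the identity $\B(G)=\B(\core(G))$, the observation that a single cycle-block forces $2$-connectedness (so that Theorem~\ref{BG1-lem} is available), and the bipartiteness argument needed to pass from the ``repeatedly subdividing a single edge'' description in Theorem~\ref{BG1-lem} to the ``even number of times'' phrasing of (vi). One should also be mindful not to conflate ``blocks of $G'$'' with ``blocks in $\B(G')$'' when quoting Lemma~\ref{BG3-lem}.
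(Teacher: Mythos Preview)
Your proposal is correct and follows essentially the same route as the paper: reduce to the core via Proposition~\ref{prop1}, then split on $\card{\B(G')}$ and invoke Theorem~\ref{BG1-lem}, Lemma~\ref{BG2-lem}, and Lemma~\ref{BG3-lem} in the three cases. The paper's own proof is a terse four-sentence summary of exactly this; you have simply filled in the connective tissue (why $\card{\B(G')}=1$ forces $2$-connectedness, why the subdivision count in (vi) must be even, and why $\B(G)=\B(\core(G))$), all of which is correct.
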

\begin{proof}
The proof is simply collecting our results thus far.  If we are not in (i), then
the core of $G$ has minimum degree at least 2.  When $G$ is (4,2)-choosable
and 2-connected, we are in (i-vi) by Theorem~\ref{BG1-lem}.  When
$\card{\B(G)}=2$, we are in (vii) or (viii) by Lemma~\ref{BG2-lem}.  When
$\card{\B(G)}\ge 3$, we are in (ix) by Lemma~\ref{BG3-lem}.
\end{proof}

Theorem~\ref{necessity-thm} confirms one direction of the characterization
conjectured by Meng, Puleo, and Zhu.  If a graph is (4,2)-choosable, then it is
a graph they conjectured was (4,2)-choosable.  To complete the proof of their
conjecture, we must show that each graph in (i)--(ix) of
Theorem~\ref{necessity-thm} is indeed (4,2)-choosable.  Case (i) is trivial, and 
Tuza and Voigt~\cite{TV} handled (ii) and (v).  Meng, Puleo, and Zhu~\cite{MPZ}
handled (iii), (iv), and (vii).  So in the next section we must handle cases
(vi), (viii), and (ix).

\section{Graphs that are (4,2)-choosable}
In this section we complete the proof of Conjecture~\ref{main-conj}, by showing
that every graph in cases (vi), (viii), and (ix) of Conjecture~\ref{main-conj}
(and Theorem~\ref{necessity-thm}) is indeed (4,2)-choosable.  We should mention
now that in one case the proof is computer-aided.  The main point of this
section is to show that verifying (4,2)-choosability for each graph in the four
infinite families (case (viii) contains two of these families) can actually be
reduced to verifying (4,2)-choosability of four specific graphs, three with 8
vertices and one with 10 vertices; these graphs are shown in Figure~\ref{figGG}.  
For all of these graphs, Meng, Puleo, and
Zhu already verified (4,2)-choosability by computer.  

For the three of these graphs with cut-vertices we sketch how to check
(4,2)-choosability by hand.  For the fourth, it seem we really need computer
case-checking.  To sketch these proofs we need two ideas.  The first was
foreshadowed by our notion of $k$-forcing.  We show that each 4-assignment 
for $C_4$ forbids at most two colorings of each vertex; likewise, each
4-assignment for $\Theta_{2,2,2}$ forbids at most three colorings of each vertex.
Thus, for the two graphs on the left in Figure~\ref{figGG}, each 4-assignment
admits some coloring of the cut-vertex that extends to both blocks.
A similar, but more subtle, argument works for the 10-vertex graph.

To prove the claims in the previous paragraph about 4-assignments for $C_4$ and
$\Theta_{2,2,2}$, we introduce the notion of \emph{flat} list assignments (see
Definition~\ref{defn2}).
Intuitively, these are list assignments that are the most difficult to color from.
More formally, we show that each 4-assignment $L$ can be mapped to a flat
4-assignment $L'$ such that the 2-fold $L'$-colorings map injectively into
2-fold $L$-colorings.  The numbers of flat 4-assignments for every graph is
finite (4 for $C_4$, and 35 for $\Theta_{2,2,2}$), so we can prove the claims by
simply examining each such 4-assignment.

For the 2-connected graph, as a double-check, Landon
Rabern independently wrote a computer program to verify that it is 
(4,2)-choosable.  The biggest theoretical insight in this process was showing
that we only need to consider list assignments with at most 8 colors in total;
we prove this in Lemma~\ref{SPLish}.
Another important step was generating the list assignments to avoid isomorphic
copies (one list assignment is formed from another by permuting the names of
colors).  Otherwise, his program was essentially brute force.

As we mentioned earlier, Meng, Puleo, and Zhu proved that a graph is
(4,2)-choosable whenever it is formed from two vertex disjoint cycles by adding
a path from a vertex on one cycle to a vertex on the other.  We 
use a corollary of this fact, so we include their lemma next. 

\begin{lem}
If $G$ is a connected graph with $\B(G)=\{C_{2s},C_{2t}\}$, then $G$ is
(4,2)-choosable.
\label{barbell-lem}
\end{lem}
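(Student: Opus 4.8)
The plan is to reduce to the core and then glue two pieces along one vertex. By Proposition~\ref{prop1} we may assume $\delta(G)\ge 2$; since $\B(G)=\{C_{2s},C_{2t}\}$, the graph $G$ then consists of two vertex-disjoint even cycles $C_1$ and $C_2$ joined by a path $P$ of length $\ge 0$ (the block tree is a path whose only non-$K_2$ blocks are $C_1$ and $C_2$), meeting $C_1$ at a vertex $u_1$ and $C_2$ at a vertex $u_2$, with $u_1=u_2$ when $P$ has length $0$. Let $H$ be the subgraph of $G$ induced by $V(C_2)\cup V(P)$, so $H$ is the cycle $C_2$ together with the pendant path $P$ ending at $u_1$; then $V(C_1)\cap V(H)=\{u_1\}$ and $V(C_1)\cup V(H)=V(G)$. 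Given a $4$-assignment $L$, it suffices to choose a $2$-subset $T$ of $L(u_1)$ that is the value of $u_1$ in some $2$-fold $L$-coloring of $C_1$ and also in some $2$-fold $L$-coloring of $H$: the two colorings agree on the only common vertex $u_1$, so their union is a $2$-fold $L$-coloring of $G$.

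Call a $2$-subset $T$ of $L(v)$ \emph{admissible} for a graph $J$ (at $v$, under $L$) if $\vph(v)=T$ for some $2$-fold $L$-coloring $\vph$ of $J$. The crux is the following. \textbf{Claim A.} For every even cycle $C$, every $4$-assignment $L$, and every $v\in V(C)$, at least $4$ of the $6$ two-subsets of $L(v)$ are admissible. I would prove Claim~A by viewing a $2$-fold $L$-coloring of $C=x_0x_1\cdots x_{n-1}x_0$ (with $n$ even, $x_0=v$) as a cyclic sequence of $2$-sets $S_i\subseteq L(x_i)$ with $S_i\cap S_{i+1}=\emptyset$, and analyzing the $0/1$ disjointness relation between consecutive lists — equivalently, the product of the corresponding transfer matrices. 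Each such matrix has all row and column sums at least $1$, since a $2$-subset of a $4$-set misses at least two elements of any other $4$-set; and when all lists coincide the relation is exactly complementation, so the uniform even cycle admits all $6$ two-subsets at $v$. One must then rule out that some $4$-assignment of an even cycle makes the admissible set have size exactly $3$ (so that it is always $4$, $5$, or $6$), and making this case analysis clean is the main obstacle. The bound $4$ is sharp: for the $4$-cycle with lists $1234,1234,1235,2345$ of Lemma~\ref{C4-lem}, the inadmissible subsets at the first vertex are exactly $24$ and $34$.

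Claim~A bootstraps to cycles with a pendant path. \textbf{Claim B.} If $H'$ is an even cycle with a (possibly trivial) pendant path and $v$ is the far endpoint of the path, then for every $4$-assignment $L$ at least $4$ of the $6$ two-subsets of $L(v)$ are admissible for $H'$. This follows by induction on the length of the pendant path, with Claim~A as the base case. For the inductive step, let $H'$ be obtained from a shorter graph $H''$ by attaching a pendant edge $vw$ at the old root $w$, and assume at least $4$ two-subsets of $L(w)$ are admissible for $H''$. A $2$-subset $T$ of $L(v)$ is admissible for $H'$ iff some $2$-subset of $L(w)$ disjoint from $T$ is admissible for $H''$. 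If $T\not\subseteq L(w)$, then at least $3$ two-subsets of $L(w)$ are disjoint from $T$ while at most $2$ are inadmissible, so $T$ is admissible; thus every such $T$ works. If $T\subseteq L(w)$, then $T$ is admissible iff $L(w)\setminus T$ is admissible for $H''$; the sets $T$ of this kind are exactly the $2$-subsets of $L(v)\cap L(w)$ (of which there are at most $3$ unless $L(v)=L(w)$), and since complementation within $L(w)$ is a bijection on $2$-subsets while at most $2$ of them are inadmissible, a short count gives at least $4$ admissible $T$ in total in every case.

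To finish, apply Claim~A to $C_1$ at $u_1$ and Claim~B to $H$ at $u_1$: each yields a family of at least $4$ of the $6$ two-subsets of $L(u_1)$, so the two families share at least $4+4-6=2$ subsets. Choose $T$ in this intersection, set $\vph(u_1)=T$, extend $T$ to a $2$-fold $L$-coloring of $C_1$ and to a $2$-fold $L$-coloring of $H$, and take their union; as noted above, this is a $2$-fold $L$-coloring of $G$. Hence $G$ is $(4,2)$-choosable.
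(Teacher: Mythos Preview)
The paper does not give its own proof of this lemma: it is quoted as a result of Meng, Puleo, and Zhu~\cite{MPZ}. So there is no ``paper's proof'' to compare against; I will just assess whether your argument stands on its own.

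Your overall architecture is sound. The reduction to the core, the decomposition $G=C_1\cup H$ with $V(C_1)\cap V(H)=\{u_1\}$, and the final pigeonhole step ($4+4-6\ge 2$) are all correct. Your derivation of Claim~B from Claim~A is also correct: the case split on $|T\cap L(w)|$ and the count in each case go through exactly as you indicate (in particular, when $|L(v)\cap L(w)|=3$ you get three automatically admissible $T$'s plus at least one more among the three $T\subseteq L(w)$, since the three complements $L(w)\setminus T$ are distinct and at most two can be inadmissible).

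The gap is Claim~A itself. You do not prove it; you sketch a transfer-matrix heuristic and then explicitly say that ``making this case analysis clean is the main obstacle.'' That obstacle is the entire content of the lemma. Indeed, look at how the paper is organized: Corollary~\ref{barbell-cor}, which is exactly your Claim~B (and hence contains your Claim~A), is \emph{deduced from} Lemma~\ref{barbell-lem}, not the other way around. So what you have actually done is reduce the lemma to a statement that, in the paper's logical structure, is a \emph{consequence} of the lemma. You have shown the two are equivalent, but you have not broken into the cycle with an independent proof of either one. The special case $C_4$ of Claim~A is handled later in the paper (Claim~\ref{C4-forcing-lem}, via the four flat $4$-assignments of $C_4$), but for general $C_{2s}$ no such finite check is available, and your transfer-matrix remarks do not supply one.

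In short: the scaffolding is right, but the load-bearing Claim~A is asserted, not proved. Until you give a self-contained argument for Claim~A that does not already presuppose Lemma~\ref{barbell-lem} (or equivalently Corollary~\ref{barbell-cor}), the proposal is incomplete.
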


\begin{cor}
If $G$ is a connected graph with $\B(G)=\{C_{2s}\}$, then for each vertex $v\in
V(G)$, there does not exist a 4-assignment $L$ that is $3$-forcing for $v$.
\label{barbell-cor}
\end{cor}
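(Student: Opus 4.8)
The plan is to argue by contradiction. Suppose $G$ is connected with $\B(G)=\{C_{2s}\}$ and that some $4$-assignment $L$ is $3$-forcing for a vertex $v$; relabel colors so that $L(v)=\{1,2,3,4\}$, and let $\mathcal F$ be the set of $2$-subsets $X\subseteq\{1,2,3,4\}$ such that some $2$-fold $L$-coloring $\vph$ of $G$ has $\vph(v)=X$, so $|\mathcal F|\le 3$. The aim is to manufacture from $G$ a connected graph $G^*$ with $\B(G^*)=\{C_{2s},C_{2s}\}$ together with a $4$-assignment having no $2$-fold coloring; this contradicts Lemma~\ref{barbell-lem}. The tool is a gluing observation: if $G_1,G_2$ are connected with $\B(G_i)=\{C_{2s}\}$, if $v_i\in V(G_i)$, and if $L_i$ are $4$-assignments with $L_i(v_i)=\{1,2,3,4\}$ whose associated families $\mathcal F_i$ (the colorings of $v_i$ attained by some $2$-fold $L_i$-coloring) are disjoint, then identifying $v_1$ with $v_2$ yields a connected graph $G^*$ in which that vertex is a cut-vertex; hence every cycle of $G^*$ lies inside a copy of $G_1$ or of $G_2$, so $\B(G^*)=\{C_{2s},C_{2s}\}$, while $L_1\cup L_2$ has no $2$-fold coloring, since restricting one to $G_i$ would color the shared vertex with a member of $\mathcal F_i$.

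I will build $(G_1,G_2)$ from $G$ in one of two ways, according to whether $\mathcal F$ contains a complementary pair $\{X,\{1,2,3,4\}\setminus X\}$. Suppose first that it does not. Let $G_2=G+x$, where $x$ is a new leaf attached to $v$, and extend $L$ by $L(x)=\{1,2,3,4\}$; note $\B(G+x)=\{C_{2s}\}$. In any $2$-fold coloring of $G+x$, vertex $x$ receives a $2$-subset disjoint from $\vph(v)$, hence $\vph(x)=\{1,2,3,4\}\setminus\vph(v)$, so the family attained at $x$ is exactly $\overline{\mathcal F}=\{\{1,2,3,4\}\setminus X:X\in\mathcal F\}$ (in the language of Proposition~\ref{obs1}, attaching the leaf exchanges the $3_{in}$ and $3_{out}$ types). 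Now take $G_1=G$ with vertex $v$ and $G_2=G+x$ with vertex $x$: the needed disjointness $\mathcal F\cap\overline{\mathcal F}=\emptyset$ holds precisely because $\mathcal F$ has no complementary pair, so the gluing observation applies.

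Suppose instead that $\mathcal F$ does contain a complementary pair; then $|\mathcal F|\in\{2,3\}$, and up to relabeling $\mathcal F$ is $\{12,34\}$, or $\{12,34\}$ together with one further $2$-subset $Y$ --- and since every such $Y$ meets both $12$ and $34$, in the second case $\mathcal F$ is a path family such as $\{12,23,34\}$. Here I use two relabeled copies of $G$: choose a permutation $\sigma$ of $\{1,2,3,4\}$ with $\sigma(\mathcal F)\cap\mathcal F=\emptyset$ (for instance $\sigma=(2\,3)$ sends $\{12,34\}$ to $\{13,24\}$, and $\sigma=(1\,3\,4\,2)$ sends $\{12,23,34\}$ to $\{13,14,24\}$), extend $\sigma$ to an injection of the set of all colors used by $L$, and let $G_2$ be a vertex-disjoint copy of $G$ carrying the $4$-assignment $L_2=\sigma\circ L$, so the family attained at $v$ in $(G_2,L_2)$ is $\sigma(\mathcal F)$. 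With $G_1=G$ and $L_1=L$, the gluing observation applies once more, completing the proof.

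The step I expect to be the crux is the necessity of splitting into these two cases. The ``two relabeled copies'' construction fails exactly when $\mathcal F$ is a star $\{12,13,14\}$ or a triangle $\{23,24,34\}$, since then its set-complement is a triangle (respectively a star), lying in a different $S_4$-orbit, so no $\sigma$ can move $\mathcal F$ off itself; dually, the ``pendant'' construction fails exactly when $\mathcal F$ contains a complementary pair. The two constructions together suffice only because every family of size at most $3$ either is not a star or a triangle, or contains no complementary pair --- indeed a star or a triangle contains no complementary pair. Beyond this, the only routine points are checking the block structure of the glued graphs and extending the relabeling $\sigma$ to colors outside $\{1,2,3,4\}$.
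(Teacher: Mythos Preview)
Your proof is correct and takes essentially the same approach as the paper. Both arguments glue two copies of $G$ to produce a graph with $\B=\{C_{2s},C_{2s}\}$ that would violate Lemma~\ref{barbell-lem}: you organize the split by whether $\mathcal F$ contains a complementary pair, while the paper lists the three isomorphism types $\{12,13,23\}$, $\{12,13,14\}$, $\{12,23,34\}$ of $\mathcal F$; in each case the construction is the same (join by an edge when there is no complementary pair, identify with a color permutation when there is), and even your permutation $(1\,3\,4\,2)$ matches the one the paper uses.
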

\begin{proof}
Suppose the corollary is false; let $G$, $v$, and $L$ be a counterexample.
Let $C$ denote the cycle in $G$, and let $G'$ be the subgraph of $G$ consisting
of $C$, $v$, and the path from $v$ to $C$.  By symmetry, we assume that
$L(v)=1234$ and that $L$ forces either (i) $\vph(v)\in\{12,13,23\}$, (ii)
$\vph(v)\in \{12,13,14\}$, or (iii) $\vph(v)\in\{12,23,34\}$.  In cases (i) and
(ii), we proceed as follows.  Form $H$ from two copies of $G'$ by adding an edge
between the copies of $v$ (with each vertex keeping its list from $L$).  Now $H$
has no 2-fold $L$-coloring, contradicting Lemma~\ref{barbell-lem}.  In case (iii),
form two copies of $G'$ and $L$, but in one copy permute the colors in the lists
as follows: $1\to 3$, $2\to 1$, $3\to 4$, $4\to 2$.  Now form $H$ from these two
copies by identifying their copies of $v$.  The original list assignment $L$
forces $\vph(v)\in\{12,23,34\}$, but the modified version of $L$ forces
$\vph(v)\in\{13,14,24\}$.  Thus, $H$ has no coloring from this 4-assignment,
again contradicting Lemma~\ref{barbell-lem}.
\end{proof}

\begin{lem}
Let $G$ be a graph with $\B(G)=\{\theta_{2,2,2},C_{2s}\}$ or $\B(G)=\{C_4,C_{2s},
C_{2t}\}$ and let $e$ be an edge of $G$
not in any cycle.  Form $G'$ from $G$ by contracting $e$.  If $G$ is
not (4,2)-choosable, then neither is $G'$.
\label{edge-contract-lem}
\end{lem}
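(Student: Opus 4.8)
The plan is to view $G'=G/e$ as two graphs glued at a single vertex, reduce non-$(4,2)$-choosability of $G$ to a statement about two ``cross-intersecting'' families of colour-pairs on the two sides of $e$, and then manufacture a bad $4$-assignment for $G'$ by renaming colours on one side. First I would reduce to $\delta(G)\ge 2$: if $e\notin\core(G)$ then $\core(G')=\core(G)$ and we are done by Proposition~\ref{prop1}, and otherwise we may replace $G$ by $\core(G)$ (which has the same $\B$). Write $e=xy$, let $V_x,V_y$ be the vertex sets of the two components of $G-e$ with $x\in V_x$, $y\in V_y$, and set $H_1=G[V_x]$, $H_2=G[V_y]$. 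Since $e$ lies in no cycle, it lies on a path of $G$ joining two of its cyclic blocks, so — after swapping the two sides if necessary — $H_1$ consists of a single even cycle with a pendant path ending at $x$: in the case $\B(G)=\{\theta_{2,2,2},C_{2s}\}$ it is the side containing $C_{2s}$, and in the case $\B(G)=\{C_4,C_{2s},C_{2t}\}$ it is the side of the cyclic block nearest $e$ (an end block of the path of blocks, hence $C_4$, $C_{2s}$, or $C_{2t}$), so in either case $\B(H_1)$ is a single even cycle. Finally, $G'$ is precisely $H_1$ and $H_2$ glued along the vertex $z$ obtained by identifying $x$ with $y$.

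Next I would set up the family translation. Let $L$ be a $4$-assignment with no $2$-fold $L$-colouring of $G$. Since $e$ is the only edge of $G$ between $V_x$ and $V_y$, a $2$-fold $L$-colouring of $G$ is exactly a pair of $2$-fold $L$-colourings of $H_1$ and $H_2$ whose colours on $x$ and on $y$ are disjoint. So, writing $\A$ for the set of colours a $2$-fold $L$-colouring of $H_1$ can give $x$, and $\B$ for the analogous set for $H_2$ and $y$, the hypothesis says every $A\in\A$ meets every $B\in\B$. If $\B=\emptyset$, then $H_2$ (a subgraph of $G'$ once $y$ is identified with $z$) has no $2$-fold $L$-colouring, so extending $L$ to $G'$ arbitrarily shows $G'$ is not $(4,2)$-choosable; likewise if $\A=\emptyset$. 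So assume $\A,\B\neq\emptyset$.

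Now comes the heart of the argument, using the cycle side. By Corollary~\ref{barbell-cor} no $4$-assignment of $H_1$ confines the colour of $x$ to at most three pairs, so $|\A|\ge 4$. If some $B\in\B$ had $B\not\subseteq L(x)$, then at least $\binom{3}{2}=3$ pairs in $\binom{L(x)}{2}$ would be disjoint from $B$, none of them in $\A$ (as $B$ meets all of $\A$), giving $|\A|\le 3$, a contradiction; hence $\B\subseteq\binom{L(x)}{2}$. For such $B$, the unique pair in $\binom{L(x)}{2}$ disjoint from $B$ is $\overline{B}:=L(x)\setminus B$, and since $B$ meets every member of $\A$ we get $\overline{B}\notin\A$. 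Thus $\{\overline{B}:B\in\B\}\subseteq\binom{L(x)}{2}\setminus\A$, a set of size $6-|\A|\le 2$, so $|\B|\le 2$. Now build $L'$ on $G'$: keep $L$ unchanged on $H_1$ and set $L'(z):=L(x)$, so the colours a $2$-fold $L'$-colouring can put on $z$ and extend over $H_1$ are exactly $\A$; on $H_2$, rename colours by a bijection $\pi$ carrying $L(y)$ onto $L(x)$ with $\pi(B)=\overline{B}$ for every $B\in\B$. Such a $\pi$ exists because $|\B|\le 2$: the at most two requirements ``$\pi$ sends $B$ to its complement in $L(x)$'' are mutually consistent and extend to a bijection of $4$-sets (the only case needing checking is $|\B|=2$, handled by inspecting whether the two pairs of $\B$ are disjoint or share a colour, using that complementation inside a $4$-set preserves this pattern). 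After renaming, $z$ has list $L(x)=\pi(L(y))$ and the colours a $2$-fold $L'$-colouring can put on $z$ and extend over $H_2$ are $\pi(\B)=\{\overline{B}:B\in\B\}$, which is disjoint from $\A$. So no colour of $z$ extends to both sides; $G'$ has no $2$-fold $L'$-colouring and is not $(4,2)$-choosable.

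The main obstacle I expect is the bound $|\B|\le 2$: this is exactly where the hypothesis on $\B(G)$ is used, via Corollary~\ref{barbell-cor} applied to the ``even cycle with pendant path'' side of $e$, together with the elementary counting inside $\binom{L(x)}{2}$; without some such structural input one cannot control the two families at all. The secondary point to be careful about is the existence of the colour-renaming $\pi$, but this is a short finite check once $|\B|\le 2$ is established.
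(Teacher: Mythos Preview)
Your proof is correct and takes a somewhat different route from the paper's, so a brief comparison is in order.

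Both arguments split $G$ along the bridge $e$ and exploit Corollary~\ref{barbell-cor} on the ``lollipop'' side. The paper, however, keeps the complicated side fixed and works on the lollipop: writing $B_1$ for the $\theta_{2,2,2}$-side and $B_2'$ for the lollipop (which contains \emph{both} endpoints $v,w$ of $e$), it first deduces from Corollary~\ref{barbell-cor} that $L|_{B_1}$ is $2$-forcing for $v$, and then proves the propagation statement ``if $L|_{B_2'}$ is $4_{\mathrm{comp}}$- (resp.\ $4_{\mathrm{out}}$-) forcing for $v$, then $L|_{B_2'-v}$ is $4_{\mathrm{comp}}$- (resp.\ $4_{\mathrm{out}}$-) forcing for $w$''. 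One then permutes colours on $B_2'-v$ so the two forbidden pairs at $w$ line up with those at $v$. Your argument instead bounds the number $|\B|$ of colour-pairs at $y$ that extend into the complicated side $H_2$, via the clean cross-intersecting count $\{\overline{B}:B\in\B\}\subseteq\binom{L(x)}{2}\setminus\A$, and permutes colours on $H_2$.

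What your approach buys: it treats the cases $|\B|\in\{0,1,2\}$ uniformly (the paper's explicit case analysis is written only for the case where $B_2'$ forbids exactly two colourings at $v$, leaving the degenerate cases to the reader), and it avoids the slightly awkward ``without loss of generality $v\in V(\theta_{2,2,2})$'' step. What the paper's approach buys: it stays entirely within the $k$-forcing language of Definition~\ref{def1}, and the propagation step it isolates (forcing type transfers along a pendant edge on a lollipop) is a reusable statement in its own right. Your verification that the bijection $\pi$ exists when $|\B|=2$ (the two-pair check, disjoint versus overlapping) is the step that corresponds to the paper's division into $4_{\mathrm{comp}}$ and $4_{\mathrm{out}}$ types.
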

\begin{proof}
We handle explicitly the case $\B(G)=\{\theta_{2,2,2},C_{2s}\}$; the case 
$\B(G)=\{C_4,C_{2s},C_{2t}\}$ is nearly identical, so we omit it.
Let $G$, $e$, and $G'$ be as in the statement of the lemma.  Let $v$ and $w$ be the
endpoints of $e$, with $v$ closer to the $\theta_{2,2,2}$ and $w$ closer to the
$C_{2s}$.  Without loss of generality, we can assume that $v\in
V(\theta_{2,2,2})$.  Let $B_1=\theta_{2,2,2}$ and let $B'_2$ consist of $C_{2s}$
and the path from it to $v$.  Suppose $G$ is not (4,2)-choosable and let $L$ be
a 4-assignment showing this.  By Corollary~\ref{barbell-cor}, $L$ restricted to
$B_1$ must be 2-forcing for $v$ (otherwise $v$ has a coloring that extends to
both $B_1$ and $B_2'$, so $G$ is 2-fold $L$-colorable).
We show that if $L$ restricted to $B'_2$ is $4_{comp}$-forcing
(resp.~$4_{out}$-forcing) for $v$, then it is also $4_{comp}$-forcing
(resp.~$4_{out}$-forcing) for $w$.  Thus, by permuting colors on $B'_2-v$ and
identifying $w$ with $v$ in $\theta_{2,2,2}$, we get a 4-assignment $L'$ for
$G'$ witnessing that $G'$ is not (4,2)-choosable.

Suppose $L$ is $4_{comp}$-forcing for $v$.  By symmetry, assume 
$L(v)=1234$ and $L$ restricted to $B'_2$ forces $\vph(v)\notin\{12,34\}$.
Suppose $12\not\subseteq L(w)$.  Now
$\card{L(w)\setminus 12}\ge 3$, so $\vph(v)=12$ can be extended to $w$ in at
least three ways.  By Corollary~\ref{barbell-cor}, at least one of these three
ways is not forbidden by $L$ on $B_2'-v$.  Thus, $L$ does not force
$\vph(v)\ne12$, a contradiction.  So $12\subseteq L(w)$.  Similarly,
$34\subseteq L(w)$.  So $L(w)=1234$.  Since $L$ forces $\vph(v)\ne 12$,
clearly $L$ forces $\vph(w)\ne 34$.  Since $L$ forces $\vph(v)\ne 34$, also
$L$ forces $\vph(w)\ne 12$.  Thus, $L$ forces $\vph(w)\notin\{12,34\}$.  Hence,
$L$ is $4_{comp}$-forcing on $w$, as desired.  If instead $L$ is
$4_{out}$-forcing for $v$, the proof that $L$ is
$4_{out}$-forcing for $w$ is very similar.  By symmetry, we assume 
$L(v)=1234$ and $L$ forces $\vph(v)\notin\{12,13\}$.  This implies
$L(w)=123\alpha$ and $L$ forces $L(w)\notin\{2\alpha,3\alpha\}$.  
By permuting colors on $B_2'-v$, we get lists on $G'$ such that their
restriction to the contraction of $B_2'$ forces $\vph(v)\notin\{12,13\}$ or
$\vph(v)\notin\{12,34\}$, just like the lists on $B_2'$ did.  So if $G$ is not
(4,2)-choosable, then neither is $G'$.
\end{proof}

We need one more lemma of Meng, Puleo, and Zhu (it is their Lemma~7.4).

\begin{lem}
Let $G$ be a graph containing a path $P$ on 5 vertices which all have degree 2
in $G$, and let $G'$ be the graph obtained by deleting the middle vertex of $P$
and merging its neighbors.  The original graph $G$ is $(4m,2m)$-choosable if and
only if the merged graph $G'$ is $(4m,2m)$-choosable.
\label{path-contract-lem}
\end{lem}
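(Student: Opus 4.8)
The plan is to handle the two directions separately. The forward implication is immediate from the Strong Minor Lemma: deleting the middle vertex $p_3$ of $P$ — which has degree $2$, as does its neighbor $p_2$ — and identifying $p_2$ with $p_4$ is exactly one step of the operation defining strong subdivisions, so $G$ contains a strong subdivision, hence a strong minor, of $G'$. Since $(4m,2m)=(2\cdot 2m,\,2m)$, the Strong Minor Lemma (with $2m$ in the role of $m$) gives that if $G'$ is not $(4m,2m)$-choosable then neither is $G$; equivalently, if $G$ is $(4m,2m)$-choosable then so is $G'$. It remains to prove the reverse implication.

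For the reverse implication I would prove the contrapositive: if $G$ is not $(4m,2m)$-choosable, then neither is $G'$. Let $a$ (resp.\ $b$) be the neighbor of $p_1$ (resp.\ $p_5$) not on $P$, and set $G_0=G-\{p_1,\dots,p_5\}$; since every $p_i$ has degree $2$, the path $P$ meets the rest of $G$ only through the edges $ap_1$ and $p_5b$. Thus $P$ behaves as a gadget whose only effect is to restrict which pairs of $2m$-sets can appear simultaneously on $a$ and $b$: let $R_P$ be the set of pairs $(A,B)$, with $A\subseteq L(a)$, $B\subseteq L(b)$ and $|A|=|B|=2m$, that extend to a proper $2m$-fold coloring of $p_1,\dots,p_5$, and define $R_{P'}$ analogously for the two-edge path $p_1\,p\,p_5$ in $G'$, with respect to a list assignment $L'$ on $\{p_1,p,p_5\}$ still to be chosen. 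Given a $4m$-assignment $L$ on $G$ with no $2m$-fold coloring, I would keep $L$ unchanged on $G_0$ and choose $L'$ on $\{p_1,p,p_5\}$ so that $R_{P'}\subseteq R_P$. Then any $2m$-fold $L'$-coloring of $G'$ restricts to a $2m$-fold $L$-coloring $\psi$ of $G_0$ with $(\psi(a),\psi(b))\in R_{P'}\subseteq R_P$, so $\psi$ extends over $p_1,\dots,p_5$ to a $2m$-fold $L$-coloring of $G$, contradicting the choice of $L$; hence $G'$ has no $2m$-fold $L'$-coloring.

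The crux is therefore the purely combinatorial claim that a list assignment $L'$ with $R_{P'}\subseteq R_P$ always exists. Two ingredients go into this. First, one needs to understand $R_P$: whether a path of degree-$2$ vertices between two precolored ends admits a $2m$-fold coloring is governed by a defect/Hall-type condition (the local requirement at an internal vertex $v$ with neighbor colors $X,Y$ is $|L(v)\cap(X\cup Y)|\le 2m$), and one must propagate these requirements along $P$. Second — and this is where the hypothesis that $P$ has exactly five vertices is used — one realizes $R_P$ by a two-edge gadget. The natural construction lets $L'(p_1)$ agree with $L(a)$ exactly in the colors that matter, lets $L'(p_5)$ agree with $L(b)$ likewise, and gives $p$ a set of fresh colors that records the relevant overlaps, so that colorings of $p$ correspond to joint colorings of $p_2$ and $p_4$ with $p_3$ absorbing the remaining slack. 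Parity is essential: both $p_1\dots p_5$ and $p_1\,p\,p_5$ have an \emph{even} number of edges, so the two gadgets chain the same number of complementation-type steps; removing one vertex rather than two would flip this parity and genuinely change $R_P$ (in the extreme case where all of $L(a),L(b),L(p_1),\dots,L(p_5)$ coincide, an even path forces $A=B$ while an odd path forces $A\cap B=\emptyset$), and one cannot go below three vertices. I expect this list-redesign step, together with the verification that the two gadget relations nest, to be the main obstacle. A handful of degenerate configurations — $a=b$, or $a$ or $b$ lying on $P$ (so that $G$ is a short cycle and $G'$ a shorter one of the same parity) — fall outside the gadget picture but are settled directly, with the conclusion holding for the same parity reason.
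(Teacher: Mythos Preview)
The paper does not prove this lemma; it quotes it verbatim from Meng, Puleo, and Zhu (their Lemma~7.4) and gives no argument, so there is no in-paper proof to compare against.

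Your forward direction is correct: $G'$ is obtained from $G$ by one strong-subdivision step, hence is a strong minor of $G$, and the Strong Minor Lemma (with $2m$ in place of $m$) gives the implication.

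For the reverse direction, the reduction you set up is sound---replace the 5-vertex path gadget by a 3-vertex one, keep the lists on $G_0$, and choose new lists on $\{p_1,p,p_5\}$ so that the induced relation $R_{P'}$ on $(\psi(a),\psi(b))$ is contained in $R_P$. But you do not carry out the step that matters. You describe the intended $L'$ only in impressionistic terms (``agree with $L(a)$ exactly in the colors that matter'', ``a set of fresh colors that records the relevant overlaps''), you never write down $L'(p_1),L'(p),L'(p_5)$, you never verify that each has size exactly $4m$, and you never prove $R_{P'}\subseteq R_P$. You explicitly flag this as ``the main obstacle'' and leave it there. That claim \emph{is} the content of the hard direction; without it you have an outline, not a proof. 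The parity remark is correct and explains why the statement is believable, but it does not substitute for the construction and verification.
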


\begin{lem}
To verify Conjecture~\ref{main-conj}, it suffices to show that the four graphs
in Figure~\ref{figGG} are (4,2)-choosable.
\label{just-four-lem}
\end{lem}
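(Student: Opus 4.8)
The plan is to invoke Theorem~\ref{necessity-thm} for the ``only if'' direction of Conjecture~\ref{main-conj}, and for the ``if'' direction---that every graph whose core is of type (i)--(ix) is (4,2)-choosable---to collapse each infinite family of cores onto a single small representative using the two reduction lemmas, Lemma~\ref{edge-contract-lem} and Lemma~\ref{path-contract-lem}. By Proposition~\ref{prop1} it is enough to treat the cores themselves. Types (i)--(v) and (vii) are covered by prior work (as recorded after Theorem~\ref{necessity-thm}); in type (vi), the instance with zero subdivisions is $\Gmixed$, a six-vertex graph whose (4,2)-choosability can be checked directly. So the real content is types (vi) with a positive even number of subdivisions, (viii), and (ix), and I would handle each by the same scheme.

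Take type (viii), $\B(G)=\{\theta_{2,2,2},C_{2s}\}$, with the two blocks joined by a path. First apply Lemma~\ref{edge-contract-lem} to the edges of the connecting path one at a time, contracting it down to length $0$; at each step $\B(G)$ is unchanged, so the lemma's hypothesis persists. Then apply Lemma~\ref{path-contract-lem} to the $2s-1$ consecutive $2$-vertices of $C_{2s}$: one application turns $C_{2s}$ into $C_{2s-2}$, and iterating shrinks the cycle to $C_4$. Since both lemmas give implications ``the smaller graph is (4,2)-choosable $\Rightarrow$ so is the larger'', (4,2)-choosability of the reduced graph transfers to $G$. The reduced graph is $\theta_{2,2,2}$ with a $C_4$ identified at one vertex $v$, and by the symmetry of $\theta_{2,2,2}$ there are exactly two such graphs according to whether $v$ is a $2$-vertex or a $3$-vertex; these are two of the graphs in Figure~\ref{figGG}. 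Type (ix), $\B(G)=\{C_4,C_{2s},C_{2t}\}$, is the same: contract the two connecting paths to length $0$ with Lemma~\ref{edge-contract-lem}, then shrink each of $C_{2s}$ and $C_{2t}$ to $C_4$ with Lemma~\ref{path-contract-lem}---the middle $C_4$ is never touched, so its two cut-vertices remain non-adjacent---leaving the ten-vertex graph obtained by gluing a $C_4$ at each of the two non-adjacent vertices of a central $C_4$. For type (vi) with $2k$ subdivisions and $k\ge 2$, the subdivided edge together with the $2$-vertex of $\Gmixed$ at its end is a path of $2k+1\ge 5$ consecutive $2$-vertices, so Lemma~\ref{path-contract-lem} drops $2k$ to $2k-2$; iterating lands on the two-subdivision graph ($k=1$), the fourth graph of Figure~\ref{figGG}.

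Assembling these reductions, once the four graphs of Figure~\ref{figGG} are known to be (4,2)-choosable, so is every core of type (i)--(ix), and combined with Theorem~\ref{necessity-thm} this gives Conjecture~\ref{main-conj}. I expect the work here to be bookkeeping rather than conceptual: one must check at each step that the hypotheses of Lemma~\ref{edge-contract-lem} (namely $\B(G)\in\{\{\theta_{2,2,2},C_{2s}\},\{C_4,C_{2s},C_{2t}\}\}$ and the contracted edge lying on no cycle) and of Lemma~\ref{path-contract-lem} (five consecutive $2$-vertices) keep holding, and that the process stops exactly at the listed graphs---a cycle cannot be shrunk past $C_4$ because an attachment vertex has degree greater than $2$, and $\theta_{2,2,2}$ admits no reduction at all. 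The only genuine subtlety is that $\Gmixed$ (the zero-subdivision instance of (vi)) is not reachable from the two-subdivision graph via Lemma~\ref{path-contract-lem}, since no vertex of $\Gmixed$ has two neighbors of degree $2$; it therefore has to be disposed of separately as a small case rather than folded into Figure~\ref{figGG}.
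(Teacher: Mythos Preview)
Your approach is correct and follows essentially the same reduction scheme as the paper, invoking Theorem~\ref{necessity-thm} and then collapsing each infinite family in (vi), (viii), (ix) onto a small representative via Lemma~\ref{edge-contract-lem} and Lemma~\ref{path-contract-lem}. (You apply the two reduction lemmas in the opposite order from the paper, but this is immaterial: the hypotheses of each continue to hold either way.)

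The one substantive difference is your handling of $\Gmixed$ itself, the zero-subdivision instance of case~(vi). You propose a separate direct verification, and you correctly note that Lemma~\ref{path-contract-lem} cannot reach $\Gmixed$ from the two-subdivision graph. The paper instead observes that $\Gmixed$ is a \emph{strong minor} of the two-subdivision graph: in the length-4 path of degree-2 vertices, delete the middle vertex and identify its two neighbors, recovering the length-2 path of $\Gmixed$. By the contrapositive of the Strong Minor Lemma, (4,2)-choosability of the two-subdivision graph (one of the four graphs in Figure~\ref{figGG}) then yields (4,2)-choosability of $\Gmixed$. This folds $\Gmixed$ back into the four listed graphs and makes the lemma literally true as stated; your version, strictly speaking, establishes that it suffices to check those four graphs \emph{plus} $\Gmixed$. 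Replacing your ``check directly'' with this one-line Strong Minor argument closes the gap.
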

\begin{proof}
By Theorem~\ref{necessity-thm} and the final paragraph of Section~\ref{BG3-sec},
to prove the conjecture it suffices to show
that all graphs in its cases (vi), (viii), and (ix) are (4,2)-choosable.
By Lemma~\ref{path-contract-lem} we assume every block $C_{2s}$ or
$C_{2t}$ is in fact $C_4$, and that every path in case (vi) has length at most
4.  By Lemma~\ref{edge-contract-lem}, we assume every edge of $G$ is
in a cycle.  Finally, the instance of case (vi) where the path of unspecified
even length has length 2 is a strong minor of the case where the path has length
4.  Thus, we need only consider the latter, which is shown in Figure~\ref{figGG}.
\end{proof}
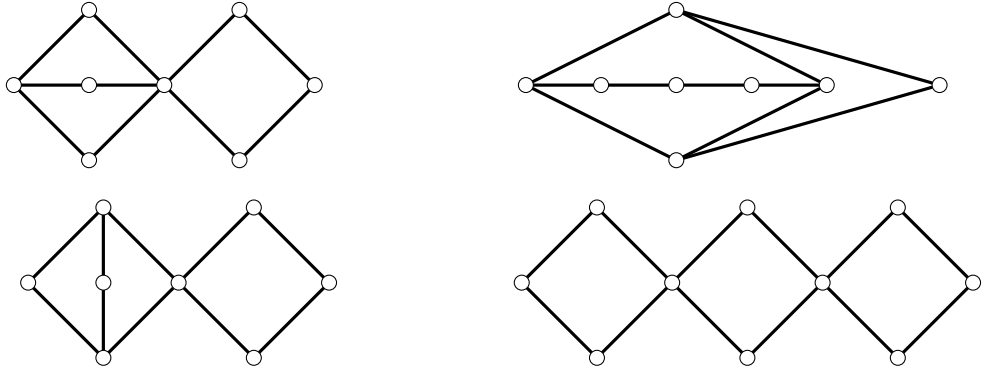
\begin{figure}
\centering
\begin{minipage}[b]{0.45\linewidth}
\centering
\begin{tikzpicture}[scale = 10]
\Vertex[style = unlabeledStyle, x = 0.300, y = 0.850, L = \small {}]{v0}
\Vertex[style = unlabeledStyle, x = 0.200, y = 0.750, L = \small {}]{v1}
\Vertex[style = unlabeledStyle, x = 0.300, y = 0.650, L = \small {}]{v2}
\Vertex[style = unlabeledStyle, x = 0.400, y = 0.750, L = \small {}]{v3}
\Vertex[style = unlabeledStyle, x = 0.500, y = 0.850, L = \small {}]{v4}
\Vertex[style = unlabeledStyle, x = 0.500, y = 0.650, L = \small {}]{v5}
\Vertex[style = unlabeledStyle, x = 0.600, y = 0.750, L = \small {}]{v6}
\Vertex[style = unlabeledStyle, x = 0.300, y = 0.750, L = \small {}]{v7}
\Edge[](v0)(v1)
\Edge[](v0)(v3)
\Edge[](v2)(v1)
\Edge[](v2)(v3)
\Edge[](v4)(v3)
\Edge[](v4)(v6)
\Edge[](v5)(v3)
\Edge[](v5)(v6)
\Edge[](v1)(v7)
\Edge[](v3)(v7)
\end{tikzpicture}
\end{minipage}
\begin{minipage}[b]{0.45\linewidth}
\centering
\begin{tikzpicture}[scale = 10]
\Vertex[style = unlabeledStyle, x = 0.200, y = 0.750, L = \small {}]{v0}
\Vertex[style = unlabeledStyle, x = 0.400, y = 0.750, L = \small {}]{v1}
\Vertex[style = unlabeledStyle, x = 0.600, y = 0.750, L = \small {}]{v2}
\Vertex[style = unlabeledStyle, x = 0.300, y = 0.750, L = \small {}]{v3}
\Vertex[style = unlabeledStyle, x = 0.500, y = 0.750, L = \small {}]{v4}
\Vertex[style = unlabeledStyle, x = 0.400, y = 0.650, L = \small {}]{v5}
\Vertex[style = unlabeledStyle, x = 0.400, y = 0.850, L = \small {}]{v6}
\Vertex[style = unlabeledStyle, x = 0.750, y = 0.750, L = \small {}]{v7}
\Edge[](v1)(v4)
\Edge[](v2)(v4)
\Edge[](v0)(v3)
\Edge[](v1)(v3)
\Edge[](v0)(v5)
\Edge[](v2)(v5)
\Edge[](v0)(v6)
\Edge[](v2)(v6)
\Edge[](v7)(v6)
\Edge[](v7)(v5)
\end{tikzpicture}
\end{minipage}
~
\vspace{.15in}

\begin{minipage}[b]{0.45\linewidth}
\centering
\begin{tikzpicture}[scale = 10]
\Vertex[style = unlabeledStyle, x = 0.300, y = 0.850, L = \small {}]{v0}
\Vertex[style = unlabeledStyle, x = 0.200, y = 0.750, L = \small {}]{v1}
\Vertex[style = unlabeledStyle, x = 0.300, y = 0.650, L = \small {}]{v2}
\Vertex[style = unlabeledStyle, x = 0.400, y = 0.750, L = \small {}]{v3}
\Vertex[style = unlabeledStyle, x = 0.500, y = 0.850, L = \small {}]{v4}
\Vertex[style = unlabeledStyle, x = 0.500, y = 0.650, L = \small {}]{v5}
\Vertex[style = unlabeledStyle, x = 0.600, y = 0.750, L = \small {}]{v6}
\Vertex[style = unlabeledStyle, x = 0.300, y = 0.750, L = \small {}]{v7}
\Edge[](v0)(v1)
\Edge[](v0)(v3)
\Edge[](v2)(v1)
\Edge[](v2)(v3)
\Edge[](v4)(v3)
\Edge[](v4)(v6)
\Edge[](v5)(v3)
\Edge[](v5)(v6)
\Edge[](v0)(v7)
\Edge[](v2)(v7)
\end{tikzpicture}
\end{minipage}
\begin{minipage}[b]{0.45\linewidth}
\centering
\begin{tikzpicture}[scale = 10]
\Vertex[style = unlabeledStyle, x = 0.300, y = 0.850, L = \small {}]{v0}
\Vertex[style = unlabeledStyle, x = 0.200, y = 0.750, L = \small {}]{v1}
\Vertex[style = unlabeledStyle, x = 0.300, y = 0.650, L = \small {}]{v2}
\Vertex[style = unlabeledStyle, x = 0.400, y = 0.750, L = \small {}]{v3}
\Vertex[style = unlabeledStyle, x = 0.500, y = 0.850, L = \small {}]{v4}
\Vertex[style = unlabeledStyle, x = 0.500, y = 0.650, L = \small {}]{v5}
\Vertex[style = unlabeledStyle, x = 0.600, y = 0.750, L = \small {}]{v6}
\Vertex[style = unlabeledStyle, x = 0.700, y = 0.850, L = \small {}]{v7}
\Vertex[style = unlabeledStyle, x = 0.800, y = 0.750, L = \small {}]{v8}
\Vertex[style = unlabeledStyle, x = 0.700, y = 0.650, L = \small {}]{v9}
\Edge[](v7)(v8)
\Edge[](v9)(v8)
\Edge[](v7)(v6)
\Edge[](v9)(v6)
\Edge[](v4)(v6)
\Edge[](v5)(v6)
\Edge[](v4)(v3)
\Edge[](v5)(v3)
\Edge[](v0)(v3)
\Edge[](v2)(v3)
\Edge[](v0)(v1)
\Edge[](v2)(v1)
\end{tikzpicture}
\end{minipage}
\caption{To check that the four infinite families of graphs in cases (vi),
(viii), and (ix) of Conjecture~\ref{main-conj} are (4,2)-choosable, it suffices
to check that the four graphs shown above are (4,2)-choosable.\label{figGG}}
\end{figure}

Next we show that for each graph $G$ in Figure~\ref{figGG}, if $G$ is
not (4,2)-choosable, then this is witnessed by some 4-assignment $L$ such that
$\card{\cup_{v\in V(G)}L(v)}\le 8$.  
This observation was useful in restricting
the list assignments that Rabern's program needed to consider.  For a graph
$G$, and list assignment $L$, let \emph{$\pot(L)$}\aside{$\pot(L)$} denote $\cup_{v\in
V(G)}L(v)$.  

\begin{lem}Let $G$ be any graph in Figure~\ref{figGG}. If $G$ is not (4,2)-choosable,
then this is witnessed by some 4-assignment $L$ with $\card{\pot(L)}\le 8$.
\label{SPLish}
\end{lem}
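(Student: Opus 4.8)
The plan is a minimal-counterexample argument built around one color-merging reduction. Among all $4$-assignments $L$ witnessing that $G$ is not $(4,2)$-choosable, pick one minimizing $\card{\pot(L)}$, and suppose for contradiction that $\card{\pot(L)}\ge 9$. Call two colors $\alpha,\beta\in\pot(L)$ \emph{mergeable} if no list $L(v)$ contains both; if they are, let $L'$ be obtained from $L$ by replacing $\beta$ by $\alpha$ in every list. Then $L'$ is a $4$-assignment (no list shrank) with $\card{\pot(L')}=\card{\pot(L)}-1$, and $L'$ still witnesses non-$(4,2)$-choosability: a $2$-fold $L'$-coloring $\vph'$ lifts to a $2$-fold $L$-coloring by replacing, at each vertex $v$ with $\beta\in L(v)$, the color $\alpha$ in $\vph'(v)$ by $\beta$ whenever $\alpha\in\vph'(v)$. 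The only coordinate changed at any vertex is the $\alpha$/$\beta$ one, and for every edge at most one endpoint has $\alpha\in\vph'(\cdot)$ by properness of $\vph'$, so the lift is proper, contradicting minimality. Hence \emph{every pair of colors in $\pot(L)$ co-occurs in some list}.

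This single fact already does most of the counting. Each of the $\card{V(G)}\le 10$ lists is a $4$-set, covering $\binom42=6$ pairs of colors, so $\binom{\card{\pot(L)}}{2}\le 6\,\card{V(G)}$; also every color must lie in enough lists to cover its $\card{\pot(L)}-1$ partner pairs, so it lies in at least $\lceil(\card{\pot(L)}-1)/3\rceil$ of them. For the three $8$-vertex graphs the first inequality gives $\card{\pot(L)}\le 10$. For the $10$-vertex graph it gives $\card{\pot(L)}\le 11$, but $\card{\pot(L)}=11$ would force each of $11$ colors into at least $4$ lists, i.e.\ at least $44$ incidences among only $40=4\cdot 10$ available, a contradiction; so $\card{\pot(L)}\le 10$ in all four cases. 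Moreover at $\card{\pot(L)}=10$ every color lies in exactly $3$ or $4$ of the lists, leaving only a handful of pairs doubly covered, so the list family is an extremely rigid near-covering design.

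What remains is to rule out $\card{\pot(L)}\in\{9,10\}$ for each of the four graphs, and here I would split on $2$-connectivity. For the three graphs with a cut-vertex $v$ (the two shapes of $\theta_{2,2,2}*C_4$ and $C_4*C_4*C_4$) I would descend to blocks: the restriction of $L$ to each block need only forbid the relevant colorings of the one or two incident cut-vertices, so it can be replaced by a \emph{flat} $4$-assignment of that block (Definition~\ref{defn2}) with the same forbidden sets at those vertices; flat $4$-assignments of $C_4$ and of $\theta_{2,2,2}$ use few colors, and after permuting color names so the blocks agree at the (at most two) shared cut-vertices, the recombined assignment uses at most $8$ colors, contradicting $\card{\pot(L)}\ge 9$. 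For the $2$-connected $8$-vertex graph, the subdivision of $\Gmixed$, there is no cut-vertex to peel off, and I would instead finish by a direct structural analysis: the rigid design just described forces heavy overlaps among the eight lists, and since the graph is bipartite with both parts of size $4$ this cannot coexist with properness, because adjacent vertices whose lists overlap too much admit no disjoint pair of $2$-subsets. Pinning down exactly which overlap patterns the design allows and checking each against the adjacency structure of this one graph is the crux of the argument and the main obstacle; it is a finite, short verification, but it is where all the work lies.
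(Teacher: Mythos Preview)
Your global merging step is correct and is essentially a flattening move (Lemma~\ref{flat-lem} applied to all of $G_\beta$ at once when $G_\alpha$ and $G_\beta$ are disjoint). The counting that follows is also correct and cleanly gives $\card{\pot(L)}\le 10$ for all four graphs.

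The gap is the passage from $10$ down to $8$. For the $2$-connected $8$-vertex graph you offer no real argument: the sentence ``adjacent vertices whose lists overlap too much admit no disjoint pair of $2$-subsets'' is false---two identical $4$-sets always split as $\{1,2\}\cup\{3,4\}$---so the rigidity of the near-design does not by itself obstruct a proper $2$-fold coloring, and the promised ``short finite verification'' is not carried out. For the graphs with a cut-vertex, your block-flattening idea is sound for the two single-cut-vertex graphs (flatten each block, rename colors so the lists at the cut-vertex agree, and reuse the extra colors of one block inside the other), but for the $10$-vertex graph with \emph{two} cut-vertices in the middle $C_4$ it is not clear how to flatten the middle block while simultaneously controlling the lists at both cut-vertices so that they still match the outer blocks. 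As written, the proposal stops at $\card{\pot(L)}\le 10$.

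The paper's proof avoids all of this case analysis by exploiting a single structural fact common to all four graphs: each admits a $2$-vertex set $X=\{x_1,x_2\}$ such that $G-X$ is a disjoint union of paths on at most three vertices each, with any $3$-vertex path having its middle vertex of degree $2$ in $G$. One then fixes any $8$-set $S\supseteq L(x_1)\cup L(x_2)$ and, for every color $\alpha\notin S$, replaces $\alpha$ on each component $C$ of $G_\alpha$ (necessarily contained in one of these short paths, so $\card{\bigcup_{v\in C}L(v)}\le 7$) by some $\beta\in S\setminus\bigcup_{v\in C}L(v)$. These are flattening moves, so a $2$-fold coloring for the new assignment lifts back to one for $L$, and the new pot lies in $S$. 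This works uniformly for all four graphs, including the $2$-connected one, with no design-counting or block decomposition needed.
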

\begin{proof}
Suppose the lemma is false, and let $G$ and $L$ be a counterexample.  So
$\card{\pot(L)}>8$.  
Now $V(G)$ contains a subset $\{x_1,x_2\}$, call it $X$, such that
$G-X$ consists of vertex disjoint paths, each with at most three vertices.
Further, if a path $P$ has exactly three vertices, then the interior vertex of
$P$ has no neighbors outside of $P$.
Pick $S\subseteq \pot(L)$ such that $L(x_1)\cup
L(x_2)\subseteq S$ and $\card{S}=8$.  
We will find a 4-assignment $L'$ such that $\pot(L')\subseteq S$ and $G$ has no
2-fold $L'$-coloring.

If any color $\alpha\in \pot(L)$ has a component of $G_{\alpha}$ that
is an isolated vertex $v$, and $v\notin X$, then replace $\alpha$ in $L(v)$ with a color in a
list for some neighbor $w$ of $v$.  
Repeat this step until no such $\alpha$ exists.  (Eventually this happens, since
at each step we decrease the sum of the numbers of components of $G_\alpha$,
taken over all $\alpha\in \pot(L)$.)  Now if there exists $\alpha\in
\pot(L)\setminus S$ such that $G_\alpha$ has a component $C$ of size at most 2,
then we replace $\alpha$ in $L(v)$, for each $v\in C$, with a color $\beta\in
S\setminus \cup_{v\in C}L(v)$.  (This is possible since $|\cup_{v\in C}L(v)|\le
2(4)-1<|S|$.)  Again, repeat this step until it is no longer possible.
Finally, suppose there exists $\alpha\in \pot(L)\setminus S$ such
that $G_\alpha$ contains a component $C$ (which is a path) on three vertices;
let $w$ denote the center
vertex of this path.  Because of the first sentence in this paragraph, each
color in $L(w)$ appears in $L(v)$ for some neighbor $v$ of $w$.  Thus,
$|\cup_{x\in C}L(x)|\le 2(4)-1<|S|$.  So we can again replace $\alpha$ with some
color $\beta\in (S\setminus\cup_{x\in C}L(x))$.  By repeating this process, we
eventually reach a list assignment $L'$ such that $G$ has a 2-fold $L'$-coloring
only if $G$ has a 2-fold $L$-coloring; further $\pot(L')\subseteq S$, so
$|\pot(L')|\le 8$.
\end{proof}

\begin{proof}[Proof of Main Theorem]
In view of Lemma~\ref{just-four-lem}, it suffices to show that the four graphs
in Figure~\ref{figGG} are (4,2)-choosable.  For the three graphs with a
cut-vertex, a proof is sketched in Theorem~\ref{cut-vert-thm} below (and it was
also shown by a computer program of Meng, Puleo, and Zhu).  For the 2-connected
graph, this was checked by a program written by Landon Rabern (and also the
program of Meng, Puleo, and Zhu).  This completes the proof.
\end{proof}

\begin{defn}
\label{defn2}
For a list assignment $L$, and a color $\alpha\in \pot(L)$, let
$G_{\alpha}$ denote the subgraph of $G$ induced by vertices with $\alpha$ in
their list.  Fix a graph $G$, a list assignment $L$ for $G$, and $\alpha\in
\pot(L)$.  Fix a component $C$ of $G_\alpha$ and a color $\beta\in
\pot(L)\setminus \cup_{v\in V(C)}L(v)$.  A \Emph{flattening move} for $L$
consists of replacing $\alpha$ by $\beta$ in $L(v)$ for each $v\in V(C)$.  Let
$\#(G)$ denote the number of components in $G$.  A list assignment $L$ is
\Emph{flat} if, among all list assignments $L'$ that can be formed from $L$ by
a sequence of flattening moves, $L$ minimizes $|\pot(L')|$ and, subject to
that, minimizes $\sum_{\alpha\in \pot(L')}\#(G_\alpha)$. 
When $L$ is flat, we typically assume $\pot(L)=\{1,\ldots,|\pot(L)|\}$.
For a graph $G$, let $f_k(G,i)$\aside{$f_k(G,i)$} 
denote the number of flat $k$-assignments $L$ for $G$ with $|\pot(L)|=i$ up to
isomorphism (this includes renaming colors, as well as automorphisms of $G$).
\end{defn}

\begin{lem}
Let $L$ and $L'$ be list assignments for $G$.  If $L'$ is formed from $L$ by a
sequence of flattening moves, then $b$-fold $L'$-colorings of $G$ map
injectively to $b$-fold $L$-colorings of $G$.  
\label{flat-lem}
\end{lem}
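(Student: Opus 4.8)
The plan is to induct on the number of flattening moves in the sequence, which reduces everything to the case of a single flattening move; the general statement then follows by composing the resulting injections. So suppose $L'$ is obtained from $L$ by one move: fix $\alpha\in\pot(L)$, a component $C$ of $G_\alpha$, and $\beta\in\pot(L)\setminus\bigcup_{v\in V(C)}L(v)$, and replace $\alpha$ by $\beta$ in $L(v)$ for each $v\in V(C)$. Note that for $v\in V(C)$ we have $\alpha\in L(v)$ and $\beta\notin L(v)$, while $\beta\in L'(v)$ and $\alpha\notin L'(v)$, and the two lists agree off $\{\alpha,\beta\}$; for $v\notin V(C)$ the lists are equal. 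First I would introduce, for each vertex $v$, the natural bijection $\sigma_v\colon L'(v)\to L(v)$ that sends $\beta$ to $\alpha$ when $v\in V(C)$ and is the identity otherwise. Extending $\sigma_v$ to $b$-subsets and taking the product over all $v$, I get a bijection $\Phi$, defined by $\Phi(\vph')(v)=\sigma_v(\vph'(v))$, from the collection of all functions $\vph'$ with $\vph'(v)\in\binom{L'(v)}{b}$ to the collection of all functions $\vph$ with $\vph(v)\in\binom{L(v)}{b}$.

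Because $\Phi$ is already a bijection on these ambient collections, it suffices to check that it maps $b$-fold $L'$-colorings to $b$-fold $L$-colorings; then $\Phi$ restricted to $b$-fold $L'$-colorings is the desired injection. So let $\vph'$ be a $b$-fold $L'$-coloring and set $\vph=\Phi(\vph')$. Membership $\vph(v)\in\binom{L(v)}{b}$ is built in, so only disjointness across edges remains. The two facts I would isolate before the case analysis are: (a) since $\alpha\notin L'(v)$ for $v\in V(C)$, no vertex of $C$ has $\alpha$ in its $\vph'$-color; and (b) since $C$ is a \emph{component} of $G_\alpha$, no vertex outside $V(C)$ with a neighbor in $V(C)$ can have $\alpha$ in its list, hence not in its $\vph'$-color. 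Then for an edge $vw$ I would split into: both endpoints outside $V(C)$ (trivial, $\vph$ agrees with $\vph'$ there); both inside $V(C)$ (at most one color contains $\beta$ by properness of $\vph'$; if $\vph'(v)$ does, then $\vph(v)=(\vph'(v)\setminus\{\beta\})\cup\{\alpha\}$ and $\vph(w)=\vph'(w)$ misses $\alpha$ by (a) and misses $\vph'(v)\setminus\{\beta\}$ by properness); and the mixed case $v\in V(C)$, $w\notin V(C)$ (if $\beta\notin\vph'(v)$ nothing changed; if $\beta\in\vph'(v)$, then $\vph(v)=(\vph'(v)\setminus\{\beta\})\cup\{\alpha\}$ and $\vph(w)=\vph'(w)$ misses $\alpha$ by (b)).

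I expect the mixed-edge case to be the only point with real content, and it is exactly where the maximality of $C$ as a component of $G_\alpha$ (fact (b)) is essential; the rest is bookkeeping with the bijections $\sigma_v$ and the observation (a). Having shown $\Phi$ sends proper $L'$-colorings to proper $L$-colorings and is globally injective, I would conclude that $b$-fold $L'$-colorings inject into $b$-fold $L$-colorings for one move, and then compose over the whole sequence of flattening moves.
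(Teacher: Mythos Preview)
Your proof is correct and follows essentially the same approach as the paper: induct on the number of flattening moves, reduce to a single move, and define the injection by swapping $\beta$ back to $\alpha$ on $V(C)$. The paper's version is terser---it notes that ``neither $\alpha$ nor $\beta$ is used on any neighbor of $v$ in a $b$-fold $L'$-coloring'' without separating out your facts (a) and (b)---but your explicit bijection $\Phi$ on the ambient sets and your edge-by-edge case analysis make the same argument more transparent, and in particular you correctly identify that the component hypothesis on $C$ is exactly what handles the mixed edge.
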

\begin{proof}
Let $G$, $L$, and $L'$ satisfy the hypotheses.
Our proof is by induction on the number of flattening moves $t$ in the sequence.  
The base case, $t=0$, is trivial.  Since a composition of injections is an
injection, it suffices to consider the case when $t=1$.  Suppose that $L'$ is
formed from $L$ by replacing $\alpha$ by $\beta$ in $L(v)$ for each $v\in V(C)$,
where $C$ is some component of $G_\alpha$.
To map a $b$-fold $L'$ coloring $\vph'$ to a $b$-fold $L$-coloring, for each
$v\in V(C)$ with $\beta\in \vph'(v)$, we replace $\beta$ by $\alpha$.
Clearly this mapping is an injection, since $\alpha\notin \cup_{v\in
V(C)}L'(v)$.  Further, the image is a $b$-fold $L$-coloring, since neither
$\alpha$ nor $\beta$ is used on any neighbor of $v$ in a $b$-fold
$L'$-coloring.
\end{proof}

\begin{cor}
\label{flat-cor}
To show that $G$ is $b$-fold $a$-choosable, it suffices to show that $G$ has a
$b$-fold $L$-coloring for every flat $a$-assignment $L$.
\end{cor}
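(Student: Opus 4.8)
The plan is to deduce the corollary directly from Lemma~\ref{flat-lem}. Fix an arbitrary $a$-assignment $L$ for $G$; we must produce a $b$-fold $L$-coloring. Consider the collection $\mathcal{R}(L)$ of all list assignments obtainable from $L$ by a (possibly empty) sequence of flattening moves. Every flattening move replaces occurrences of a color $\alpha$ by a color $\beta\in\pot(L')\setminus\bigcup_{v\in V(C)}L'(v)$, and thus introduces no new colors; hence every $L'\in\mathcal{R}(L)$ satisfies $\pot(L')\subseteq\pot(L)$, and since $G$ is finite the set $\mathcal{R}(L)$ is finite.

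First I would choose $L^\star\in\mathcal{R}(L)$ that minimizes $|\pot(L^\star)|$ and, subject to that, minimizes $\sum_{\alpha\in\pot(L^\star)}\#(G_\alpha)$; such an $L^\star$ exists by finiteness of $\mathcal{R}(L)$. I claim $L^\star$ is flat in the sense of Definition~\ref{defn2}. Indeed, any list assignment reachable from $L^\star$ by a sequence of flattening moves is also reachable from $L$ (concatenate the two sequences), so $\mathcal{R}(L^\star)\subseteq\mathcal{R}(L)$, while $L^\star\in\mathcal{R}(L^\star)$. Since $L^\star$ is already optimal for the two-tier objective over the larger set $\mathcal{R}(L)$, it is a fortiori optimal over the smaller set $\mathcal{R}(L^\star)$, which is precisely the definition of flatness.

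Now I would apply the hypothesis of the corollary to the flat $a$-assignment $L^\star$: $G$ has a $b$-fold $L^\star$-coloring $\vph^\star$. Since $L^\star$ is obtained from $L$ by a sequence of flattening moves, Lemma~\ref{flat-lem} gives an injection from the $b$-fold $L^\star$-colorings of $G$ into the $b$-fold $L$-colorings of $G$; the image of $\vph^\star$ under this injection is the required $b$-fold $L$-coloring. As $L$ was an arbitrary $a$-assignment, $G$ is $b$-fold $a$-choosable.

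The only points requiring any care are the finiteness of $\mathcal{R}(L)$ (immediate once one observes that a flattening move never enlarges $\pot$) and checking that the two-step minimizer over $\mathcal{R}(L)$ actually meets Definition~\ref{defn2}'s notion of flatness; both are routine, and I do not expect a genuine obstacle here, since all the combinatorial content already resides in Lemma~\ref{flat-lem}.
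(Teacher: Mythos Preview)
Your proposal is correct and follows the natural route. The paper gives no explicit proof of this corollary, treating it as immediate from Lemma~\ref{flat-lem}; your argument simply supplies the routine details the paper omits (that one can reach a flat $a$-assignment from any given $a$-assignment by flattening moves, and then pull back a coloring via the injection of Lemma~\ref{flat-lem}).
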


\begin{thm}
The three graphs in Figure~\ref{figGG} with a cut-vertex are each
(4,2)-choosable.
\label{cut-vert-thm}
\end{thm}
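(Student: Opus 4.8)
The plan is to reduce the whole statement to two facts about the only two blocks that can occur here, $C_4$ and $\theta_{2,2,2}$, and then glue colorings at the cut-vertex (or cut-vertices). Throughout I would lean on Corollary~\ref{flat-cor}: to prove a graph is $(4,2)$-choosable it suffices to produce a $2$-fold $L$-coloring for every \emph{flat} $4$-assignment $L$, and for a fixed graph there are only finitely many flat $4$-assignments up to renaming colors and graph automorphisms. The two facts I want are: (a) for every $4$-assignment $L$ of $C_4$ and every vertex $v$, at most two of the ${4\choose 2}$ possible colorings of $v$ fail to extend to a $2$-fold $L$-coloring of $C_4$; and (b) the same statement with $C_4$ replaced by $\theta_{2,2,2}$ and ``two'' replaced by ``three''. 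Each of (a) and (b) need only be checked for flat assignments, since a single flattening move never shrinks the set of colorings realizable at a fixed vertex --- off the flattened component it leaves that set unchanged, and on the component it only relabels the set bijectively by swapping the old color for the new one --- so the number of forbidden colorings at $v$ under $L$ is at most the number under any flattening of $L$. As $C_4$ has only $4$ flat $4$-assignments and $\theta_{2,2,2}$ has only $35$, I would finish (a) and (b) by inspecting each; the $\theta_{2,2,2}$ enumeration is routine but error-prone, so I would double-check it by computer.

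Given (a) and (b), the two graphs with a single cut-vertex fall out at once. By Lemmas~\ref{edge-contract-lem} and~\ref{path-contract-lem}, each such $G$ is exactly a copy $B_\theta$ of $\theta_{2,2,2}$ and a copy $B_C$ of $C_4$ sharing one vertex $z$ and no edge. Fix any $4$-assignment $L$ of $G$. By (b) at most three colorings of $z$ fail to extend to a $2$-fold $L$-coloring of $B_\theta$, and by (a) at most two fail to extend to $B_C$; since $3+2<{4\choose 2}$, there is a $c\subseteq L(z)$ with $|c|=2$ that extends to a $2$-fold $L$-coloring $\varphi_\theta$ of $B_\theta$ with $\varphi_\theta(z)=c$ and to a $2$-fold $L$-coloring $\varphi_C$ of $B_C$ with $\varphi_C(z)=c$. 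Because $V(B_\theta)\cap V(B_C)=\{z\}$, the union $\varphi_\theta\cup\varphi_C$ is a $2$-fold $L$-coloring of $G$, so $G$ is $(4,2)$-choosable.

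For the $10$-vertex graph $G$ (case (ix)) the three blocks $B_1,B_2,B_3$ are all copies of $C_4$, and the middle block $B_2$ contains both cut-vertices $v\in V(B_1)\cap V(B_2)$ and $w\in V(B_2)\cap V(B_3)$, which are the two non-adjacent corners of $B_2$. I would argue by contradiction: let $L$ be a $4$-assignment with no $2$-fold $L$-coloring, chosen to minimize $|S|$ where $S=\cup_{u\in V(B_2)}L(u)$; note that since any flattening of a witness is again a witness, the color-replacement reductions from the proof of Lemma~\ref{SPLish} apply (a color whose support in $B_2$ is a singleton component, or two colors with the same or with disjoint support in $B_2$, can be merged away, contradicting minimality of $|S|$). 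Using these reductions together with the Pigeonhole Principle I would show $|S|\le 4$, ruling out $|S|=6$ and then $|S|=5$; in the sub-cases where the reductions stall, only a short list of explicit patterns for $L$ restricted to $B_2$ survives, and for each I exhibit enough $2$-fold colorings of $B_2$ directly. When $|S|=4$ every vertex of $B_2$ has list $S$, so the $2$-fold $L$-colorings of $B_2$ are exactly ``$v$ and $w$ take some $2$-set $T$ and the other two corners take $S\setminus T$'', giving six colorings whose restrictions to $v$ run over all of ${4\choose 2}$ and likewise for $w$. In every case I would produce at least six $2$-fold $L$-colorings of $B_2$ whose six restrictions to $v$ are pairwise distinct and whose six restrictions to $w$ are pairwise distinct; then, since (a) says $B_1$ forbids at most two colorings of $v$ and $B_3$ forbids at most two colorings of $w$, at least $6-2-2=2$ of these colorings of $B_2$ extend through both cut-vertices to a $2$-fold $L$-coloring of $G$, contradicting the choice of $L$.

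The main obstacle is precisely the $|S|\le 4$ step for the $10$-vertex graph, and within it the $|S|=6$ and $|S|=5$ cases: there one must track, color by color and vertex by vertex, which list-patterns on the four vertices of $B_2$ survive all the merging reductions (up to the symmetries of $C_4$ and renaming of colors), and then for that handful of patterns construct and verify the required spread-out families of colorings of $B_2$; this casework is the bulk of the argument. The only other non-trivial point is the enumeration behind fact (b) --- the $35$ flat $4$-assignments of $\theta_{2,2,2}$ --- which I would verify computationally as a safeguard, as it is too long to be reliable by hand.
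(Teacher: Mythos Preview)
Your handling of the two 8-vertex graphs matches the paper exactly: establish that no $4$-assignment for $C_4$ is $3$-forcing and no $4$-assignment for $\theta_{2,2,2}$ is $2$-forcing (by checking the finitely many flat assignments), then use $3+2<\binom{4}{2}$ at the cut-vertex. Your observation that flattening can only increase the number of forbidden colorings at a fixed vertex is correct and is exactly why it suffices to check flat assignments.

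For the 10-vertex graph your approach diverges from the paper and has a gap. The paper does \emph{not} do casework on $|S|=\bigl|\bigcup_{u\in V(B_2)}L(u)\bigr|$. Instead it proves two sharper facts about $C_4$: first (Claim~\ref{strong-C4-lem}), whenever a $4$-assignment on $C_4$ forbids two colorings at a vertex, those two forbidden colorings \emph{share a common color}; second (Claim~\ref{strong-C4-lem2}), if you forbid at one corner $v_1$ of $C_4$ two colorings that share a color, then the surviving $L$-colorings restrict at the opposite corner $v_3$ either to at least three distinct pairs or to two pairs that are \emph{disjoint}. These two claims combine to finish the 10-vertex graph cleanly: $B_1$ forbids at $v$ two colorings sharing a color; propagating through $B_2$ via Claim~\ref{strong-C4-lem2}, the surviving colorings at $w$ either number at least three, or are two disjoint pairs; in the first case $B_3$ (which forbids at most two) cannot kill all of them, and in the second case $B_3$'s two forbidden colorings share a color, so they can kill at most one of a disjoint pair.

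Your plan instead is to reduce $|S|$ by color-merging and then exhibit, ``in every case'', six colorings of $B_2$ with pairwise distinct restrictions at both $v$ and $w$. That claim is false for $|S|\in\{5,6\}$. For instance, with $L(v)=1256$, $L(x)=1234$, $L(w)=L(y)=3456$ (a genuine $|S|=6$ pattern that survives all merging reductions), there is no $2$-fold $L$-coloring of $B_2$ with $\varphi(w)=34$, so at most five distinct restrictions at $w$ are possible. Five would still suffice via $5-2-2\ge 1$, but another surviving $|S|=6$ pattern ($L(v)=1345$, $L(x)=1256$, $L(w)=2346$, $L(y)=3456$, with $x,y$ as the cut-vertices) does not even admit five colorings with pairwise distinct restrictions at both cut-vertices, and requires a conditional argument (e.g., first argue that $B_3$ must forbid a specific coloring of $y$, then count). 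So your template ``produce a spread-out family of colorings of $B_2$'' is not uniform enough; the paper's route through Claims~\ref{strong-C4-lem} and~\ref{strong-C4-lem2} is what sidesteps this casework.
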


The proof is tedious, but straightforward given the seven claims below, so we
omit many details and simply list the claims. 
Claims~\ref{flat-pot6-lem} and~\ref{flat-pot7-lem} are proved by case
analysis, showing that for each 4-assignment other than those listed in the
appendix, we can perform a flattening move.  The proof of
Claim~\ref{flat-pot6-lem} is fairly short, but that of
Claim~\ref{flat-pot7-lem} is longer.  The arguments rely heavily on the
automorphisms of $\Theta_{2,2,2}$.  To emphasize this, we use the notation
$K_{2,3}$, rather than $\Theta_{2,2,2}$.  

\begin{clm}
\label{flat-pot6-lem}
Every flat 4-assignment $L$ for $C_4$ has $|\pot(L)|\le 6$.
Further, $f_4(C_4,6)=1$, $f_4(C_4,5)=2$, and $f_4(C_4,4)=1$.
\end{clm}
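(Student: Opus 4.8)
The plan is to first extract enough structure from flatness to forbid $|\pot(L)|\ge 7$, and then to finish by a short finite enumeration for $|\pot(L)|\le 6$; throughout, label the vertices of $C_4$ cyclically as $v_1v_2v_3v_4$.

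First I would record three facts about a flat $4$-assignment $L$ with $|\pot(L)|\ge 5$, each proved by exhibiting a flattening move when it fails. (a) No color lies in exactly one list: if $\alpha\in L(v)$ alone, pick $\beta\in\pot(L)\setminus L(v)$ (possible since $|\pot(L)|>4$) and flatten the component $\{v\}$ of $G_\alpha$; this deletes $\alpha$ from $\pot(L)$, contradicting the minimality of $|\pot(L')|$ over flattening sequences. (b) No color lies in exactly the two nonadjacent lists $L(v_1),L(v_3)$: then $G_\alpha$ has the two singleton components $\{v_1\},\{v_3\}$, and flattening $\{v_1\}$ by any $\beta\in\pot(L)\setminus L(v_1)$ keeps $|\pot(L)|$ fixed but strictly decreases $\sum_\gamma\#(G_\gamma)$ unless adding $v_1$ raises $\#(G_\beta)$, i.e.\ unless $\beta\notin L(v_2)\cup L(v_4)$; flatness therefore forces every valid $\beta$ to miss $L(v_2)\cup L(v_4)$, so $L(v_2)\cup L(v_4)\subseteq L(v_1)$, and symmetrically $L(v_2)\cup L(v_4)\subseteq L(v_3)$; a cardinality count then forces $L(v_1)=L(v_2)=L(v_3)=L(v_4)$, contradicting $|\pot(L)|\ge 5$. (c) If $\alpha$ lies in exactly the two adjacent lists $L(u),L(w)$, then $L(u)\cup L(w)=\pot(L)$; otherwise flattening $\alpha$ by $\beta\in\pot(L)\setminus(L(u)\cup L(w))$ again deletes $\alpha$ from $\pot(L)$. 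Consequently every color of $L$ occupies a connected subgraph of $C_4$ on two, three, or four vertices (an edge, a $P_3$, or all of $C_4$); writing $a,c,d$ for the number of colors of each type, $|\pot(L)|=a+c+d$ and $2a+3c+4d=16$, hence $c+2d=16-2|\pot(L)|$.

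Next I would rule out $|\pot(L)|\in\{7,8\}$. If $|\pot(L)|=8$ then $c=d=0$, so every color is an edge color; but for an edge carrying a color, (c) gives $L(u)\cap L(w)=\emptyset$, while that color lies in $L(u)\cap L(w)$ --- a contradiction. If $|\pot(L)|=7$ then $c+2d=2$; if $c\ge 1$, take a $P_3$-color and let $v$ be the center of its path: by (c) neither edge at $v$ carries an edge color (such an edge $\{u,v\}$ would put the $P_3$-color together with that edge color, two distinct colors, into the $2$-element set $L(u)\cap L(v)$), so every color on $v$ is a $P_3$- or $C_4$-color, and there are at most $c+d\le 2$ of these --- impossible since $|L(v)|=4$. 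Hence $c=0$, $d=1$; then the sole $C_4$-color and any edge color, distinct, both lie in some $L(u)\cap L(w)$ of size $8-7=1$ --- a contradiction. Therefore $|\pot(L)|\le 6$.

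Finally, since $|\pot(L)|\ge 4$ always, it remains to enumerate flat $4$-assignments with $|\pot(L)|\in\{4,5,6\}$ up to color renaming and automorphisms of $C_4$. For $|\pot(L)|=4$ every list equals $\pot(L)$, so $f_4(C_4,4)=1$. For $|\pot(L)|=5$ each list is determined by its missing color; facts (a) and (b) say no color is missing at two opposite positions and that a twice-missing color has adjacent missing positions, while (c) then holds automatically, and a short check leaves exactly the two patterns ``all four missing colors distinct'' and ``one missing color repeated on an edge, two singletons,'' so $f_4(C_4,5)=2$. For $|\pot(L)|=6$ we get $c+2d=4$; the case $d=1$ is killed by the same squeezed-center-vertex argument as before (now the center would have a list of size at most $1+c=3$), so $d=0$, $c=4$, $a=2$, and a finite check leaves a single isomorphism class, represented by $L(v_1)L(v_2)L(v_3)L(v_4)=1234,1256,3456,3456$, so $f_4(C_4,6)=1$. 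I expect the only real obstacle to be fact (b): it is the one place where what a flattening move violates is the \emph{secondary} minimization --- the component count $\sum_\gamma\#(G_\gamma)$ rather than $|\pot(L)|$ --- so one must track carefully how the move trades one color's component count against another's, whereas every other step reduces to a clean ``a color disappears from the pot'' argument.
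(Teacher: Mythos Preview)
Your approach is essentially the paper's (sketched) approach, with the structural facts (a)--(c) made explicit, and your arguments for $|\pot(L)|\ge 7$ and for $|\pot(L)|\in\{4,5\}$ are correct (modulo a typo: in the $|\pot(L)|=7$ case the intersection $L(u)\cap L(v)$ has size $8-7=1$, not $2$; your logic is fine but the word ``$2$-element'' should be ``$1$-element'').

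There are, however, two genuine gaps in the $|\pot(L)|=6$ case. First, from $c+2d=4$ you rule out $d=1$ and then declare $d=0$; the case $d=2$ (so $(a,c,d)=(4,0,2)$) is never addressed. It is easy to kill---any edge color together with the two full colors would give three distinct elements in a $2$-element intersection $L(u)\cap L(w)$---but it must be said.

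Second, and more seriously, your ``finite check'' for $(a,c,d)=(2,4,0)$ is incomplete. The two edge colors can sit on the same edge, on two adjacent edges, or on two opposite edges; only the last is non-flat (via (c)). Your representative $1234,\,1256,\,3456,\,3456$ has both edge colors on $\{v_1,v_2\}$ and admits no flattening move, so it is flat. But the paper's appendix lists a \emph{different} representative, $1256,\,1345,\,3456,\,2346$, whose edge colors lie on the two adjacent edges $\{v_1,v_2\}$ and $\{v_1,v_4\}$; it too admits no flattening move. These are not isomorphic (in yours two lists coincide, in the paper's all four are distinct), so in fact $f_4(C_4,6)\ge 2$. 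Thus your enumeration misses a class---and, as it happens, so does the paper's table; the stated value $f_4(C_4,6)=1$ appears to be off by one. Fortunately this does not affect the downstream Claims~\ref{C4-forcing-lem} and~\ref{strong-C4-lem2}, since the extra assignment is easily seen not to be $3$-forcing for any vertex, but your ``finite check'' as written does not establish the claim.
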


\begin{clm}
Let $G=C_4$ and fix $v\in V(G)$.  No 4-fold
assignment for $G$ is $3$-forcing for $v$.
\label{C4-forcing-lem}
Let $L$ be a 4-assignment for $C_4$.  If $L$ is 4-forcing for some vertex $v\in
V(C_4)$, then the two colorings of $v$ that $L$ forbids share a common color.
\label{strong-C4-lem}
\end{clm}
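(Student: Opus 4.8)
The plan is to reduce, via the flattening machinery of Definition~\ref{defn2} and Lemma~\ref{flat-lem}, to the finitely many flat $4$-assignments of $C_4$ enumerated in Claim~\ref{flat-pot6-lem}, and then to verify both statements by inspecting those. Fix some notation: for a $4$-assignment $L$ of a graph $G$ and a vertex $v$, call a $2$-subset $A\subseteq L(v)$ \emph{feasible} if $\vph(v)=A$ for some $2$-fold $L$-coloring $\vph$, and let $F_L(v)$ be the set of non-feasible $2$-subsets of $L(v)$; thus $L$ is $k$-forcing for $v$ exactly when $\card{F_L(v)}\ge 6-k$, and the two statements together say that $\card{F_L(v)}\le 2$ always, with the two elements of $F_L(v)$ sharing a color whenever $\card{F_L(v)}=2$.

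First I would record the behavior of $F_L(v)$ under a single flattening move. Suppose $L'$ is obtained from $L$ by replacing a color $\alpha$ by a color $\beta$ on the vertex set of a component $C$ of $G_\alpha$. If $v\notin C$ then $L'(v)=L(v)$, and if $v\in C$ then $L'(v)=(L(v)\setminus\{\alpha\})\cup\{\beta\}$; in either case there is a natural bijection $\rho$ from the $2$-subsets of $L(v)$ onto the $2$-subsets of $L'(v)$ (the identity when $v\notin C$, and ``swap $\alpha$ for $\beta$'' when $v\in C$), and since $\rho$ is induced by a bijection of the underlying lists it preserves the relation ``two $2$-subsets share a common color.'' Now if $\rho(A)$ is feasible for $L'$, witnessed by a $2$-fold $L'$-coloring $\vph'$ with $\vph'(v)=\rho(A)$, then the injection in the proof of Lemma~\ref{flat-lem} sends $\vph'$ to a $2$-fold $L$-coloring $\vph$, and — because that injection only ever restores $\alpha$ in place of $\beta$, and only on $C$ — we get $\vph(v)=\rho^{-1}(\rho(A))=A$, so $A$ is feasible for $L$. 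Hence $\rho\big(F_L(v)\big)\subseteq F_{L'}(v)$. Iterating along a sequence of flattening moves ending at a flat assignment $\widehat L$ (such a sequence exists because only finitely many assignments are reachable from $L$) and composing the resulting list-bijections into a single list-bijection $\widehat\rho$, we obtain $\widehat\rho\big(F_L(v)\big)\subseteq F_{\widehat L}(v)$; in particular $\card{F_L(v)}\le\card{F_{\widehat L}(v)}$, and $F_L(v)$ is, through $\widehat\rho$, a color-relabeling of a subset of $F_{\widehat L}(v)$.

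It therefore suffices to prove both statements for flat $4$-assignments. By Claim~\ref{flat-pot6-lem} there are only $f_4(C_4,6)+f_4(C_4,5)+f_4(C_4,4)=1+2+1=4$ such assignments up to isomorphism, and since $C_4$ is vertex-transitive it is enough to examine one vertex $v$ per assignment; for each I would run through the $\binom{4}{2}=6$ candidate values of $\vph(v)$, checking extendability along the path $C_4-v$. The outcome I expect is that $\card{F_{\widehat L}(v)}\le 2$ in every case and that, when $\card{F_{\widehat L}(v)}=2$, the two forbidden colorings intersect (for instance, for $1234,1234,1235,2345$ of Lemma~\ref{C4-lem} one gets $F_{\widehat L}(v_1)=\{24,34\}$). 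Granting this, both statements follow for an arbitrary $4$-assignment $L$: we have $\card{F_L(v)}\le\card{F_{\widehat L}(v)}\le 2<3$, so $L$ is not $3$-forcing for $v$; and if $L$ is $4$-forcing for $v$ then $\card{F_L(v)}\ge 2$, forcing $\card{F_L(v)}=2$, so $\widehat\rho(F_L(v))$ is a $2$-element subset of the at-most-$2$-element set $F_{\widehat L}(v)$ and hence equals it; as $F_{\widehat L}(v)$'s two elements share a color and $\widehat\rho^{-1}$ preserves common colors, the two colorings that $L$ forbids at $v$ share a color. The four-case inspection is routine; the step I expect to require the most care is the monotonicity $\rho\big(F_L(v)\big)\subseteq F_{L'}(v)$ — that is, pinning down the direction of the injection of Lemma~\ref{flat-lem} and correctly tracking its value at $v$.
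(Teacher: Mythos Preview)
Your approach is correct and is precisely the paper's: reduce to flat assignments and inspect the four cases from Claim~\ref{flat-pot6-lem}. Your explicit tracking of $F_L(v)$ through flattening moves---showing $\rho(F_L(v))\subseteq F_{L'}(v)$ and that $\rho$ preserves the relation ``share a common color''---is exactly what is needed to lift both parts of the claim from flat to arbitrary assignments; the paper's bare citation of Corollary~\ref{flat-cor} is terse on this point, so you have in fact made the reduction more explicit than the original.

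One small slip: vertex-transitivity of $C_4$ does \emph{not} by itself let you examine only one vertex per listed representative. The four representatives are listed up to graph automorphism \emph{and} color renaming, so moving $v$ to $v_1$ by an automorphism sends $\widehat L$ to an isomorphic assignment that need not equal the listed representative at $v_1$; equivalently, the stabilizer of a given representative in $\operatorname{Aut}(C_4)\times(\text{color renamings})$ need not act transitively on $V(C_4)$. The fix is trivial---check all four vertices for each of the four assignments, or work out the vertex orbits under each stabilizer---and the inspection remains routine.
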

By Corollary~\ref{flat-cor}, it suffices to prove each statement when $L$ is flat.
Claim~\ref{flat-pot6-lem} implies that we only need to check the four
4-assignments constructed in its proof.  
\sqed

\begin{clm}
\label{strong-C4-lem2}
Let $G=C_4$, with vertices $v_1,\ldots,v_4$ around the cycle.  Fix a
4-assignment $L$; in addition, forbid two colorings of $v_1$ that share a common
color.  Let $S$ be the set of $L$-colorings of $G$ that avoid the two forbidden
colorings of $v_1$.  Either (i) colorings in $S$ restrict to at least 3 distinct
colorings of $v_3$ or (ii) colorings in $S$ restrict to two colorings of $v_3$
that use disjoint color sets.
\end{clm}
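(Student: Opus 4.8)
The plan is to reduce to the case that $L$ is a flat $4$-assignment for $C_4$ and then verify the dichotomy directly for each of the finitely many such $L$, using two elementary facts about colorings of $C_4$.

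First, the two facts. Write $v_1,v_2,v_3,v_4$ in cyclic order, so $v_1$ and $v_3$ are the non-adjacent pair. For $2$-sets $X_1\subseteq L(v_1)$ and $X_3\subseteq L(v_3)$ (not necessarily disjoint, since $v_1\not\sim v_3$), the partial assignment $\varphi(v_1)=X_1,\varphi(v_3)=X_3$ extends to a $2$-fold $L$-coloring of $C_4$ if and only if $|L(v_2)\setminus(X_1\cup X_3)|\ge2$ and $|L(v_4)\setminus(X_1\cup X_3)|\ge2$; this is immediate, since $\varphi(v_2)$ and $\varphi(v_4)$ need only avoid $X_1$ and $X_3$ and can be chosen independently. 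Call such a pair \emph{feasible}. The \emph{diagonal} fact is the special case $X_1=X_3$: if $X$ is a $2$-subset of $L(v_1)\cap L(v_3)$, then $(X,X)$ is feasible, because $|L(v_i)\setminus X|=2$ for $i\in\{2,4\}$. In particular, every coloring $X_1\notin\{F_1,F_2\}$ of $v_1$ with $X_1\subseteq L(v_3)$ belongs to the set $T$ of colorings of $v_3$ realized by colorings in $S$ (take the diagonal coloring $\varphi(v_1)=\varphi(v_3)=X_1$).

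The reduction to flat $L$ parallels the treatment of Claim~\ref{strong-C4-lem}. If $L$ is not flat, a flattening move replacing a color $\alpha$ by a color $\beta$ on a component of $G_\alpha$ induces, via Lemma~\ref{flat-lem}, an injection of $2$-fold $L'$-colorings into $2$-fold $L$-colorings, and on the list of each vertex it induces the bijection $\psi$ that replaces $\beta$ by $\alpha$ wherever $\beta$ appears (and is the identity at vertices outside the component). One checks that $\psi$ carries a pair of $2$-sets sharing a color to a pair sharing a color, and carries disjoint $2$-sets to disjoint $2$-sets (and so does $\psi^{-1}$). Hence $\{F_1,F_2\}$ pulls back along $\psi$ to a pair $\{F_1',F_2'\}$ of colorings of $v_1$ sharing a color, the injection restricts to an injection $S'\hookrightarrow S$, and on $v_3$ it shows $\psi(T')\subseteq T$ with $\psi$ injective and disjointness-preserving. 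Consequently, the dichotomy for $(L',F_1',F_2')$ implies it for $(L,F_1,F_2)$: if $|T'|\ge3$ then $|T|\ge3$, and if $T'$ consists of (at most two) pairwise-disjoint sets then either $|T|\ge3$ or $T=\psi(T')$ and case (ii) holds. Iterating, we may assume $L$ is flat, so by Claim~\ref{flat-pot6-lem} there are only four choices for $L$ up to isomorphism, with $|\pot(L)|\le 6$.

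It then remains to check these four list assignments. For a fixed $L$, up to renaming colors and the automorphism of $C_4$ fixing $v_1$ (hence fixing $v_3$) there are only a bounded number of pairs $\{F_1,F_2\}$ of colorings of $v_1$ sharing a color, so this is a finite computation. In most sub-cases the diagonal fact already produces three colorings $X_1\in\binom{L(v_3)}{2}\setminus\{F_1,F_2\}$ lying in $T$ — for instance whenever $L(v_3)$ contains the three colors of $L(v_1)$ other than the common color of $F_1$ and $F_2$ — so alternative (i) holds; I expect the only work to be in the remaining sub-cases, where $L(v_1)$ and $L(v_3)$ share few colors, or share exactly three of which one is the common color of $\{F_1,F_2\}$. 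There $T$ may genuinely have only two elements, and the main obstacle is to verify in each such configuration that those two colorings of $v_3$ use disjoint color sets: equivalently, using the feasibility criterion, that no feasible pair $(X_1,X_3'')$ with $X_1\notin\{F_1,F_2\}$ has $X_3''$ meeting both candidate colorings of $v_3$. Organizing these sub-cases so that the automorphisms of the configuration and the bound $|\pot(L)|\le6$ keep the casework small is the part that needs care rather than a new idea.
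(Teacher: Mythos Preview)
Your approach is essentially the paper's: reduce to flat $4$-assignments via Lemma~\ref{flat-lem}/Claim~\ref{flat-pot6-lem}, then finite-check the four possibilities. The paper invokes Corollary~\ref{flat-cor} for the reduction, but that corollary is stated only for choosability, not for the structural dichotomy on $T$; your explicit argument that the flattening injection is a color bijection at each vertex (hence preserves ``sharing a color'' at $v_1$ and ``disjointness'' at $v_3$, and gives $\psi(T')\subseteq T$ with the right monotonicity) is exactly the justification the paper elides. Your feasibility criterion and diagonal observation are sound and will indeed dispose of most sub-cases, leaving the handful where $L(v_1)\cap L(v_3)$ is small for direct inspection, in line with the paper's ``four choices of $v_1$ and twelve forbidden pairs'' count.
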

By Corollary~\ref{flat-cor}, it suffices to prove each statement when $L$ is flat.
Claim~\ref{flat-pot6-lem} implies that we only need to check the four
4-assignments $L$ constructed in its proof; for each choice of $L$ we have four
choices for $v_1$ and twelve choices of pairs of colorings to forbid.
\sqed
%

\begin{clm}
The 10-vertex graph is (4,2)-choosable.
\end{clm}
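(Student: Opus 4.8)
The plan is to take an arbitrary $4$-assignment $L$ for the $10$-vertex graph $G$ and directly build a $2$-fold $L$-coloring. Write $B_1,B_2,B_3$ for the three blocks, each a $4$-cycle, and let $v$ be the cut-vertex with $v\in V(B_1)\cap V(B_2)$ and $w$ the cut-vertex with $w\in V(B_2)\cap V(B_3)$. The crucial structural fact (part of case (ix)) is that $v$ and $w$ are \emph{non-adjacent} in $B_2$; thus they are the two ends of a diagonal of the $4$-cycle $B_2$, and they can play the roles of $v_1,v_3$ in Claim~\ref{strong-C4-lem2}.

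First I would record what the two outer blocks force. Since $B_1$ is a $C_4$, Claim~\ref{C4-forcing-lem} says $L$ restricted to $B_1$ is not $3$-forcing for $v$, so the set $F_v$ of $2$-element subsets of $L(v)$ that fail to extend to a $2$-fold coloring of $B_1$ satisfies $|F_v|\le 2$; moreover, if $|F_v|=2$ then $L|_{B_1}$ is $4$-forcing for $v$, so (again by Claim~\ref{C4-forcing-lem}) the two colorings in $F_v$ share a color. Define $F_w$ from $B_3$ in the same way: $|F_w|\le 2$, and the two colorings in $F_w$ share a color whenever $|F_w|=2$. It now suffices to produce a $2$-fold $L$-coloring of $B_2$ whose value at $v$ avoids $F_v$ and whose value at $w$ avoids $F_w$, since such a coloring extends over $B_1$ and over $B_3$, hence to all of $G$.

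The heart of the argument is then a single application of Claim~\ref{strong-C4-lem2} to $B_2$. If $|F_v|<2$, I enlarge $F_v$ to a pair $\widetilde F_v$ of colorings of $v$ that share a common color; if $|F_v|=2$, set $\widetilde F_v=F_v$. Claim~\ref{strong-C4-lem2}, applied to the $4$-cycle $B_2$ with assignment $L|_{B_2}$, vertex $v$ in the role of $v_1$, vertex $w$ in the role of $v_3$, and forbidden pair $\widetilde F_v$, tells us that the $2$-fold $L|_{B_2}$-colorings avoiding $\widetilde F_v$ at $v$ (hence avoiding $F_v$ at $v$) either (i) realize at least three distinct colorings of $w$, or (ii) realize two colorings of $w$ that use disjoint color sets. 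In case (i), one of these at-least-three colorings of $w$ lies outside $F_w$, since $|F_w|\le 2$. In case (ii), the two colorings of $w$ are disjoint, so they cannot both lie in $F_w$ (two colorings in $F_w$ share a color whenever $|F_w|=2$), so one of them lies outside $F_w$. Either way we obtain a $2$-fold coloring of $B_2$ that extends over both outer blocks, so $G$ is $(4,2)$-choosable.

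Since the supporting claims (Claim~\ref{flat-pot6-lem} enumerating the flat $4$-assignments of $C_4$, and Claims~\ref{C4-forcing-lem} and \ref{strong-C4-lem2}, all of which reduce via Corollary~\ref{flat-cor} to finite checks) are already available, the only genuinely delicate points in this last deduction are the padding step when an outer block obstructs fewer than two colorings of its cut-vertex, and the observation that a disjoint pair of colorings at $w$ cannot be contained in $F_w$; both are immediate. The ``main obstacle'', properly speaking, lives in Claim~\ref{strong-C4-lem2}, not here. A more computational alternative — pick $L$ minimizing $|\bigcup_{x\in V(B_2)}L(x)|$, apply flattening moves (Lemma~\ref{flat-lem}) to drive this quantity down to at most $4$, and then exhibit explicitly five $2$-fold colorings of $B_2$ in which $v$ and $w$ each receive five distinct colorings — also works, but is longer.
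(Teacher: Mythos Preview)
Your proof is correct and follows essentially the same approach as the paper, which simply states that the claim ``follows directly from Claims~\ref{strong-C4-lem} and~\ref{strong-C4-lem2}.'' You have spelled out precisely how: the outer blocks each forbid at most two colorings of their cut-vertex (sharing a color when there are two), and then Claim~\ref{strong-C4-lem2} applied to the middle block guarantees a coloring of $w$ compatible with $B_3$; the padding step and the disjoint-pair observation are exactly the small details needed to make the one-line deduction rigorous.
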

This follows directly from Claims~\ref{strong-C4-lem} and~\ref{strong-C4-lem2}.
\sqed

\begin{clm}
\label{flat-pot7-lem}
Every flat 4-assignment $L$ for $K_{2,3}$ has $|\pot(L)|\le 7$.
Further, $f_4(K_{2,3},7)=1$, $f_4(K_{2,3},6)=27$, $f_4(K_{2,3},5)=6$, and
$f_4(K_{2,3},4)=1$.
\end{clm}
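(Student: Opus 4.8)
The plan is to follow the template of the (omitted) proof of Claim~\ref{flat-pot6-lem}, the crucial point being that $K_{2,3}$ contains no two vertex-disjoint edges, and more generally no two vertex-disjoint induced subgraphs each having at least two vertices. First I would establish two structural facts about every flat $4$-assignment $L$ for $K_{2,3}$. \emph{(a)~Every color class $G_\alpha$ is connected.} Every induced subgraph of $K_{2,3}$ without an isolated vertex is connected (it is a star centered at one degree-$3$ vertex, possibly together with the other degree-$3$ vertex and some degree-$2$ vertices), so a disconnected $G_\alpha$ would contain an isolated vertex $v$; if $|\pot(L)|\ge 5$ one checks that some color $\beta\notin L(v)$ occurs on a neighbor of $v$ (else that neighbor carries only colors of $L(v)$, hence carries $\alpha$, contradicting that $v$ is isolated in $G_\alpha$), and then the flattening move $\alpha\mapsto\beta$ on $\{v\}$ strictly decreases $|\pot(L)|$ or else $\sum_\gamma\#(G_\gamma)$, contradicting flatness; the case $|\pot(L)|=4$ is the constant assignment, for which every $G_\alpha=K_{2,3}$. \emph{(b)~$\bigcup_{u\in V(G_\alpha)}L(u)=\pot(L)$ for every $\alpha$.} Otherwise, as $G_\alpha$ is connected, replacing $\alpha$ by a missing color throughout $V(G_\alpha)$ removes $\alpha$ entirely and lowers $|\pot(L)|$. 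Conversely, (a) and (b) leave no legal flattening move, so together they characterize flat $4$-assignments; note that (b) forces every two color classes to share a vertex.

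For the bound I would use $\sum_\alpha|V(G_\alpha)|=5\cdot 4=20$. If $|\pot(L)|\ge 7$, then (as every class has at least two vertices) some class has exactly two, say $V(G_\alpha)=\{u,w\}$; by (b), $\pot(L)=L(u)\cup L(w)$, and since $\alpha\in L(u)\cap L(w)$ we get $|\pot(L)|\le 4+4-1=7$. Hence $|\pot(L)|\le 7$, with equality realized by the assignment found below.

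Finally, I would enumerate, up to isomorphism (renaming colors, composed with $\mathrm{Aut}(K_{2,3})\cong S_2\times S_3$), the flat $4$-assignments with $|\pot(L)|=i$ for $i\in\{4,5,6,7\}$. The value $i=4$ gives only the constant assignment. For $i\in\{5,6,7\}$ I would first list the admissible vectors $(n_2,n_3,n_4,n_5)$, where $n_j$ counts the classes on $j$ vertices, using $2n_2+3n_3+4n_4+5n_5=20$, $\sum_j n_j=i$, and $n_2\le 3$ (an order-$2$ class is an edge of $K_{2,3}$; two such classes on a common edge force $|\pot(L)|\le 6$, and three pairwise-meeting edges of a triangle-free graph share a vertex). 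For each vector one reconstructs $L$ from (a) and (b): e.g.\ an order-$2$ class $\{a,x\}$ pins $L(x)$ to $(\pot(L)\setminus L(a))$ together with one element of $L(a)$, and following the remaining colors eliminates most vectors — for instance, when $i=7$ the vector $(3,3,0,1)$ dies because a putative order-$3$ color would have room only on the two nonadjacent degree-$3$ vertices. I expect the real work to be the case $i=6$, which must be shown to yield exactly $27$ isomorphism classes: this is a finite but long case split, best organized first by $(n_2,n_3,n_4,n_5)$ and then by the incidence pattern of the small classes on the five vertices. It is this bookkeeping, rather than any single idea, that is the main obstacle; the resulting list is what the appendix tabulates and what the computer check of Meng, Puleo, and Zhu confirms.
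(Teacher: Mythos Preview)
Your overall approach---characterize flat assignments by (a) connected color classes and (b) each color class seeing the full pot, then enumerate by the size vector $(n_2,n_3,n_4,n_5)$---is correct and is essentially the case analysis the paper describes (and omits).  The bound $|\pot(L)|\le 7$ via a size-2 class and $|L(u)\cup L(w)|\le 7$ is clean.

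However, your opening sentence is wrong: $K_{2,3}$ \emph{does} contain two vertex-disjoint edges, e.g.\ $y_1x_1$ and $y_2x_2$, so also two vertex-disjoint induced subgraphs each on two vertices.  Fortunately you never use this false claim; what you actually use (and what is true) is that every induced subgraph of $K_{2,3}$ with no isolated vertex is connected---because any such subgraph must contain some $y_j$, and then every $x_i$ present is adjacent to it.  You should replace the false assertion with this correct one.

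One smaller point: your reason for discarding $(3,3,0,1)$ when $i=7$ is not right as stated.  With three size-2 classes through $y_1$ and the size-5 class filling $L(y_1)$, the three size-3 classes are forced off $y_1$ and hence (by connectedness) through $y_2$; together with the size-5 color this determines $L(y_2)$ and then $L(x_j)$.  The configuration then dies because property (b) fails for each size-3 class $\beta$: its support $\{y_2,x_j,x_k\}$ misses the size-2 color $\alpha_\ell$ living only on $\{y_1,x_\ell\}$.  The vector \emph{does} die, but via (b), not because ``an order-3 color would have room only on the two degree-3 vertices.''
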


\begin{clm}
Let $G=K_{2,3}$ and fix $v\in V(G)$.  No 4-fold assignment for $G$ is $2$-forcing for $v$.
\label{K23-forcing-lem}
\end{clm}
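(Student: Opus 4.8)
The plan mirrors the proofs of Claims~\ref{C4-forcing-lem} and~\ref{strong-C4-lem2}: reduce to flat $4$-assignments and then dispatch a finite list. By Corollary~\ref{flat-cor} --- together with the stronger fact, implicit in the injection of Lemma~\ref{flat-lem}, that the number of $2$-subsets of $L(v)$ realizable as $\vph(v)$ cannot increase when $L$ is replaced by a flat assignment $L'$ obtained from it by flattening moves --- it suffices to prove the statement when $L$ is flat: if no flat $4$-assignment is $2$-forcing for any vertex, then no $4$-assignment is. By Claim~\ref{flat-pot7-lem} there are exactly $1+27+6+1=35$ flat $4$-assignments for $K_{2,3}$ up to isomorphism, all displayed in the appendix. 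Since $\mathrm{Aut}(K_{2,3})=S_2\times S_3$ has only two vertex orbits --- the two vertices of degree $3$ and the three vertices of degree $2$ --- for each of the $35$ assignments it suffices to treat one vertex of each type, and for each resulting pair $(L,v)$ we must exhibit three distinct $2$-subsets of $L(v)$ that each extend to a $2$-fold $L$-coloring of $K_{2,3}$.

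The observation that dispatches most cases concerns a degree-$3$ vertex $v$: write $b$ for the other degree-$3$ vertex and $x,y,z$ for the degree-$2$ vertices. Since $v$ and $b$ are non-adjacent, whenever $\vph(v)\subseteq L(b)$ we may set $\vph(b)=\vph(v)$, and then each leaf $w\in\{x,y,z\}$ has $\card{L(w)\setminus\vph(v)}\ge 2$, so the coloring extends; hence every coloring of $v$ inside $L(v)\cap L(b)$ extends, and we are done for $v$ whenever $\card{L(v)\cap L(b)}\ge 3$. In the remaining degree-$3$ cases ($\card{L(v)\cap L(b)}\le 2$) and in all degree-$2$ cases, once a tentative color for $v$ is fixed the three leaves are colored independently of one another, so the job reduces to choosing colors $\vph(a),\vph(b)$ on the two hub vertices with $\card{L(w)\cap(\vph(a)\cup\vph(b))}\le 2$ for each leaf $w$; keeping $\vph(a)$ and $\vph(b)$ overlapping keeps their union small, which is exactly what makes these inequalities easy to satisfy. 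For each flat $L$ one then verifies that at least three choices of $\vph(v)$ admit such hub colors.

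The only genuine obstacle is the bookkeeping of this finite check. The unique $\card{\pot(L)}=4$ assignment (all lists $\{1,2,3,4\}$) is immediate, since for every $2$-subset $T$ one colors $v$ with $T$, the two degree-$3$ vertices with a common pair, and the remaining vertices with the complements, realizing all six colorings of $v$ for either vertex type; the six $\card{\pot(L)}=5$ assignments are short. The bulk of the work is the single $\card{\pot(L)}=7$ assignment together with the twenty-seven $\card{\pot(L)}=6$ assignments, where the lists are tight enough that the hub colors must be found by direct inspection rather than by the overlap heuristic alone; this is the portion we would carry out explicitly, checking each assignment in the appendix against the two vertex types.
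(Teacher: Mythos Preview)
Your approach is the paper's: reduce to flat assignments via the injection of Lemma~\ref{flat-lem}, then invoke Claim~\ref{flat-pot7-lem} to cut down to the 35 flat 4-assignments in the appendix and check each. The paper's own proof says nothing beyond this, so your overlap observation for the degree-3 vertex and the leaves-are-independent remark are welcome elaborations rather than a different route.

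One genuine slip, though: it does \emph{not} suffice to check a single vertex of each degree for each of the 35 assignments. The 35 assignments are already representatives modulo $\mathrm{Aut}(K_{2,3})\times S_{\text{colors}}$, so once a particular representative $L$ is fixed, the only symmetries you may still exploit are those in the stabilizer of $L$ inside $\mathrm{Aut}(K_{2,3})$, and that stabilizer is typically trivial. Concretely, for the $|\pot(L)|=7$ assignment $L(x_1)=1234$, $L(x_2)=2356$, $L(x_3)=4567$, $L(y_1)=1567$, $L(y_2)=2347$, the three leaves play pairwise different roles, and each must be checked separately; likewise both hubs. (The paper makes exactly this point explicit in the $C_4$ analogue, Claim~\ref{strong-C4-lem2}: ``for each choice of $L$ we have four choices for $v_1$''.) So the finite verification is over up to $35\times 5$ pairs $(L,v)$, not $35\times 2$; your sketch otherwise goes through unchanged.
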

By Corollary~\ref{flat-cor}, it suffices to prove the statement when $L$ is flat.
Now Lemma~\ref{flat-pot7-lem} implies that it is enough to check the 35
4-assignments $L$ constructed in its proof.
\sqed

\begin{clm}
If $G$ is a graph formed from $K_{2,3}$ and $C_4$ by identifying one vertex of
each, then $G$ is (4,2)-choosable.
\end{clm}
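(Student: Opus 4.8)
The plan is to use that $G$ has exactly one cut-vertex $v$, lying in both the copy of $K_{2,3}$ (call it $B_1$) and the copy of $C_4$ (call it $B_2$), with $V(B_1)\cap V(B_2)=\{v\}$. Let $L$ be an arbitrary 4-assignment for $G$. Since a 2-fold $L$-coloring of $G$ amounts to choosing $\vph(v)$ and then extending it independently within $B_1$ and within $B_2$, it suffices to exhibit one 2-subset $S\subseteq L(v)$ that extends both to a 2-fold $L$-coloring of $B_1$ and to a 2-fold $L$-coloring of $B_2$. For $i\in\{1,2\}$ let $F_i$ denote the set of 2-subsets of $L(v)$ that fail to extend to a 2-fold $L$-coloring of $B_i$; since $L(v)$ has ${4\choose 2}=6$ two-element subsets, I want to show $|F_1\cup F_2|<6$.

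First I would bound $|F_1|$: the restriction of $L$ to $B_1\cong K_{2,3}$ is a 4-assignment, so by Claim~\ref{K23-forcing-lem} it is not $2$-forcing for $v$; hence at least $3$ distinct 2-subsets of $L(v)$ arise as $\vph(v)$ over 2-fold $L$-colorings of $B_1$, which gives $|F_1|\le 6-3=3$. Then I would bound $|F_2|$ the same way: the restriction of $L$ to $B_2\cong C_4$ is not $3$-forcing for $v$ by Claim~\ref{C4-forcing-lem}, so at least $4$ distinct 2-subsets of $L(v)$ arise, giving $|F_2|\le 6-4=2$. Therefore $|F_1\cup F_2|\le 5<6$, and some 2-subset $S$ of $L(v)$ avoids both $F_1$ and $F_2$. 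Setting $\vph(v)=S$ and extending within $B_1$ and within $B_2$ yields a 2-fold $L$-coloring of $G$; as $L$ was arbitrary, $G$ is (4,2)-choosable.

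I do not anticipate any real obstacle: all the real work is already packaged in Claims~\ref{K23-forcing-lem} and~\ref{C4-forcing-lem}, and the argument is just the additive ``$a+b\le 6$'' mechanism at a cut-vertex used throughout Sections~\ref{BG2-sec} and~\ref{BG3-sec}. The one point to state carefully is that the restrictions of $L$ to $B_1$ and to $B_2$ really are 4-assignments for those blocks (each vertex retains a list of size $4$), so the two cited claims apply verbatim; it is also worth remarking that those claims implicitly guarantee $K_{2,3}$ and $C_4$ are themselves (4,2)-choosable, so the ``at least $3$'' and ``at least $4$'' counts are genuine rather than vacuously satisfied by a blockwise-uncolorable $L$.
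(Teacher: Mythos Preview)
Your proposal is correct and is essentially identical to the paper's own proof: the paper restricts $L$ to the two blocks, invokes Claim~\ref{K23-forcing-lem} to say at most three colorings of $v$ are forbidden on the $K_{2,3}$ side and Claim~\ref{C4-forcing-lem} to say at most two are forbidden on the $C_4$ side, and concludes that some coloring of $v$ extends to both. Your sets $F_1,F_2$ and the inequality $|F_1\cup F_2|\le 5<6$ just make this counting explicit.
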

Let $G_1=K_{2,3}$ and $G_2=C_4$.
Fix $v_1\in V(G_1)$ and $v_2\in V(G_2)$.  Form $G$ by identifying $v_1$ and
$v_2$; denote the new vertex $v$.  Fix a 4-assignment $L$ for $G$.  Let $L_1$
and $L_2$ denote the restrictions (respectively) of $L$ to $G_1$ and $G_2$.
By Claim~\ref{K23-forcing-lem}, list assignment $L_1$ forbids at most 3
colorings of $v_1$.  By Claim~\ref{C4-forcing-lem}, list assignment $L_2$
forbids at most 2 colorings of $v_2$.  Thus, some coloring of $v$ extends to
both $G_1$ and $G_2$, which shows that $G$ has a 2-fold $L$-coloring.
\sqed

\section{$(2m,m)$-choosability for general $m$}
\label{sec:general-m}

In the introduction we mentioned that for every integer $m$ there exist
bipartite graphs that are not $(2m,m)$-choosable.  Below we give a construction
illustrating this.

\begin{prop}
\label{construction-prop}
Fix a positive integer $m$.  There exists a bipartite graph that is not
$(2m,m)$-choosable.
\end{prop}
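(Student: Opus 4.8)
The plan is to exhibit, for each $m$, an explicit complete bipartite graph together with a $2m$-assignment that admits no $m$-fold coloring. Take $G=K_{2m+1,2m+1}$ with parts $A=\{a_1,\ldots,a_{2m+1}\}$ and $B=\{b_1,\ldots,b_{2m+1}\}$, and let the color pool be $C=\{1,\ldots,2m+1\}$. Assign $L(a_i)=L(b_i)=C\setminus\{i\}$ for each $i$, so every list has size exactly $2m$. Since $G$ is bipartite, it remains only to show that $G$ has no $m$-fold $L$-coloring.

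Suppose for contradiction that $\vph$ is an $m$-fold $L$-coloring of $G$. Let $U_A=\bigcup_{i}\vph(a_i)$ and $U_B=\bigcup_i\vph(b_i)$ be the sets of colors used on the two sides. Because $G$ is complete bipartite, every $a\in A$ is adjacent to every $b\in B$, so $\vph(a)\cap\vph(b)=\emptyset$ for all such pairs; hence $U_A\cap U_B=\emptyset$. So the argument reduces to showing that $U_A$ and $U_B$ are both large.

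To bound $\card{U_A}$, pick any index $i$ with $i\in U_A$; such an $i$ exists since $\vph(a_1)\neq\emptyset$. Then $\vph(a_i)\subseteq L(a_i)\cap U_A=(C\setminus\{i\})\cap U_A=U_A\setminus\{i\}$, and $\card{\vph(a_i)}=m$, so $\card{U_A}\ge m+1$. By the symmetric argument, $\card{U_B}\ge m+1$. But then $2m+2\le \card{U_A}+\card{U_B}=\card{U_A\cup U_B}\le\card{C}=2m+1$, a contradiction. Hence $G$ has no $m$-fold $L$-coloring, so $G$ is not $(2m,m)$-choosable.

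There is essentially no obstacle here; the only point that warrants a moment's care is extracting $\card{U_A}\ge m+1$ rather than merely $\card{U_A}\ge m$, which is why one picks an index $i$ that actually lies in $U_A$. It is worth remarking that at $m=1$ this recovers the classical fact that $K_{3,3}$ is not $2$-choosable, and that the pool size $2m+1$ is tight for this family, since the same counting produces no contradiction for $K_{2m,2m}$.
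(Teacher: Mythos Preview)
Your proof is correct and takes a genuinely different route from the paper. The paper builds a ``fan of 4-cycles'': it first exhibits a $2m$-assignment on a single $C_4$ that blocks one particular $m$-set on a distinguished vertex $v_1$, then glues $\binom{2m}{m}$ such cycles at $v_1$ (permuting colors appropriately) so that every possible coloring of $v_1$ is blocked by some petal. Your argument instead uses the complete bipartite graph $K_{2m+1,2m+1}$ with the classical ``shifted lists'' $L(a_i)=L(b_i)=\{1,\ldots,2m+1\}\setminus\{i\}$ and a clean counting: disjointness of $U_A$ and $U_B$ inside a pool of size $2m+1$, together with the observation $\card{U_A},\card{U_B}\ge m+1$, yields a contradiction. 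Your construction is more elementary, avoids any gadgetry, and produces a much smaller witness ($4m+2$ vertices versus $3\binom{2m}{m}+1$). The paper's construction, on the other hand, has the virtue of being built from $4$-cycles sharing a single cut-vertex, which fits the paper's broader theme of analyzing $(2m,m)$-choosability through block structure and forcing at cut-vertices. The step you flagged---choosing $i\in U_A$ so that $\vph(a_i)\subseteq U_A\setminus\{i\}$ gives $\card{U_A}\ge m+1$ rather than merely $m$---is exactly the point, and you handled it correctly.
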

\begin{proof}
Let $C$ be a 4-cycle with vertices $v_1,v_2,v_3,v_4$.  Let
$L(v_1)=L(v_2)=\{1,\ldots,2m\}$.  Let $L(v_4)=\{1,\ldots,2m-2,2m-1,2m+1\}$, and
let $L(v_3)=\{2,\ldots,2m+1\}$.  It is easy to check that $C$ has no $m$-fold
$L$-coloring $\vph$ with $\vph(v_1)=\{1,\ldots,m\}$.  By permuting the color
classes on $v_2,v_3,v_4$, for any $S\subset \{1,\ldots,2m\}$ with $\card{S}=m$,
we can construct a $2m$-assignment $L_S$ such that $L_S(v_1)=\{1,\ldots,2m\}$
but $C$ has no $m$-fold $L_S$-coloring $\vph_S$ with $\vph_S(v_1)=S$.  We begin
with ${2m\choose m}$ disjoint 4-cycles and for each $m$-element subset $S$ of
$\{1,\ldots,2m\}$ assign to some 4-cycle the list assignment $L_S$.  To form
$G$, we identify the copies of $v_1$ in all 4-cycles, with each vertex in the new
graph inheriting its list assignment from its original 4-cycle; call this list
assignment $L^*$, and note that $L^*$ is a $2m$-assignment.  Clearly, the
resulting graph $G$ has no $m$-fold $L^*$-coloring.  Thus, $G$ is not
$(2m,m)$-choosable.
\end{proof}

Many of the ideas we used to characterize (4,2)-choosable graphs apply more
generally.
Tuza and Voigt~\cite{TV} used Rubin's characterization of (2,1)-choosable graphs
to prove that they are $(2m,m)$-choosable for every $m$.  When $m$ is odd, the
characterization of $(2m,m)$-choosable graphs is simple: they are precisely the
(2,1)-choosable graphs.  To prove this, we need to show that every
$(2m,m)$-choosable graph is (2,1)-choosable.  This result is generally
attributed to Voigt~\cite{voigt98}, although her manuscript does not seem to
have been published, and is not widely available.  So, for completeness, we
include a short argument of Gutner and Tarsi~\cite{GT}.

\begin{lem}
If $m$ is odd and $G$ is $(2m,m)$-choosable, then $G$ is (2,1)-choosable.
\label{GT-lem}
\end{lem}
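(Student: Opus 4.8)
The plan is to prove the statement directly: assuming $G$ is $(2m,m)$-choosable, I will show that every $2$-assignment $L$ for $G$ admits a proper ($1$-fold) $L$-coloring, which is exactly $(2,1)$-choosability. The mechanism is a ``blow-up'' of $L$ into a $2m$-assignment followed by a majority-vote decoding, and oddness of $m$ is used twice.

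First I would fix an arbitrary $2$-assignment $L$ for $G$ and define a $2m$-assignment $L'$ by $L'(v)=L(v)\times\mathbb{Z}_m=\{(c,i):c\in L(v),\ i\in\mathbb{Z}_m\}$; since $\card{L(v)}=2$ this gives $\card{L'(v)}=2m$. By hypothesis there is an $m$-fold $L'$-coloring $\vph$. For each $v$ and each $c\in L(v)$ let $n_c(v)$ be the number of indices $i$ with $(c,i)\in\vph(v)$, so that $\sum_{c\in L(v)}n_c(v)=\card{\vph(v)}=m$. Here is the first use of parity: since $\card{L(v)}=2$ and $m$ is odd, the two numbers $n_c(v)$ cannot be equal, so there is a well-defined color $\psi(v)\in L(v)$ with $n_{\psi(v)}(v)>m/2$, and hence $n_{\psi(v)}(v)\ge (m+1)/2$.

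Next I would check that $\psi$ is a proper $L$-coloring. Certainly $\psi(v)\in L(v)$ for all $v$. Suppose $uv\in E(G)$ and $\psi(u)=\psi(v)=:c$; then $c\in L(u)\cap L(v)$ and $n_c(u),\,n_c(v)\ge (m+1)/2$, so $n_c(u)+n_c(v)\ge m+1$. On the other hand, the index sets $\{i:(c,i)\in\vph(u)\}$ and $\{i:(c,i)\in\vph(v)\}$ are disjoint subsets of $\mathbb{Z}_m$ (otherwise some color $(c,i)$ lies in $\vph(u)\cap\vph(v)$, contradicting that $\vph$ is a proper $L'$-coloring), so $n_c(u)+n_c(v)\le m$---this is the decisive second use of oddness. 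This contradiction forces $\psi(u)\ne\psi(v)$, so $\psi$ is a proper $L$-coloring. Since $L$ was an arbitrary $2$-assignment, $G$ is $(2,1)$-choosable.

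I do not expect a genuine obstacle here: the only real idea is to choose this particular blow-up $L(v)\times\mathbb{Z}_m$ rather than something more elaborate, since it is exactly what makes both parity arguments go through---oddness breaks the tie so that the majority color $\psi(v)$ is well defined, and it upgrades the trivial bound $n_c(u)+n_c(v)\le m$ into a genuine conflict with $\lceil m/2\rceil+\lceil m/2\rceil=m+1$. For $m=1$ the construction reduces to the identity, and the small case analysis by $\card{L(u)\cap L(v)}\in\{0,1,2\}$ collapses to the single argument above.
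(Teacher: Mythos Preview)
Your proof is correct and is essentially the same argument as the paper's (which the paper attributes to Gutner and Tarsi): blow up each $2$-list $\{\alpha,\beta\}$ to $\{\alpha_1,\ldots,\alpha_m,\beta_1,\ldots,\beta_m\}$, take an $m$-fold coloring from the hypothesis, and decode by majority vote, with oddness of $m$ breaking ties and forcing the pigeonhole contradiction on adjacent vertices. The only difference is notational ($L(v)\times\mathbb{Z}_m$ versus subscripts), and your remark that oddness is used ``twice'' is slightly imprecise---the inequality $n_c(u)+n_c(v)\le m$ is pure pigeonhole, and oddness enters only in guaranteeing $n_{\psi(v)}(v)\ge (m+1)/2$---but this does not affect the argument.
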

\begin{proof}
Let $G$ be $(2m,m)$-choosable, for some odd $m$.  Let $L$ be a 2-assignment
to $G$.  We show that $G$ is $L$-colorable.  Suppose
$L(v)=\{\alpha,\beta\}$.  Let
$L'(v)=\{\alpha_1,\ldots,\alpha_m,\beta_1,\ldots,\beta_m\}$.  For each vertex
$w\in V(G)$ construct $L'(w)$ from $L(w)$ analogously.  Since $L'$ is a
$2m$-assignment, by hypothesis $G$ has an $m$-fold $L'$-coloring
$\vph'$.  If $\card{\vph'(v)\cap\{\alpha_1,\ldots,\alpha_m\}}>\frac{m}2$, then
let $\vph(v)=\alpha$; otherwise, let $\vph(v)=\beta$.  Analogously, for each
vertex $w\in V(G)$, let $\vph(w)$ be the color in $L(w)$ that appears most often
(with subscripts) in $\vph'(w)$.  Since $\vph'$ is proper, so is $\vph$.
(The key observation is that if
$\card{\vph'(v)\cap\{\alpha_1,\ldots,\alpha_m\}}>\frac{m}2$ and
$\card{\vph'(w)\cap\{\alpha_1,\ldots,\alpha_m\}}>\frac{m}2$, then
$\vph'(v)\cap \vph'(w)\ne \emptyset$, so $v$ and $w$ are non-adjacent.)
\end{proof}

We have not said much about algorithms thus far.  So it is worth mentioning
that if we fix $m$ and also bound the number of vertices of degree at least 3 in
an input graph $G$, then given a $2m$-assignment $L$ for $G$, we can test in
linear time whether $G$ has an $m$-fold $L$-coloring.

\begin{thm}
Fix positive integers $m$ and $C$.  Let $G$ and $L$ be given, where $G$ is a
graph with at most $C$ vertices of degree at least 3 and $L$ is a
$2m$-assignment.  We can check in linear time whether $G$ has an $m$-fold
$L$-coloring.
\end{thm}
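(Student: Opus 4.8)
The plan is to exploit two facts that hold because $m$ and $C$ are fixed: every vertex has at most $k:=\binom{2m}{m}$ possible values of $\vph$, and the vertices of degree at least $3$ (the \emph{branch vertices}) number at most $C$, so there are at most $k^{C}$ ways to color all of them at once. First I would compute, in linear time, the set $W$ of branch vertices and then trace the \emph{threads} of $G$: maximal paths whose internal vertices all have degree $2$. Each thread joins two branch vertices (possibly equal --- a ``loop thread''), or a branch vertex to a $1$-vertex (a ``pendant thread''), or two $1$-vertices (a whole path component). Since the threads are edge-disjoint and cover all edges except those lying on cycles disjoint from $W$, there are $O(|E(G)|)$ of them and this decomposition costs linear time.

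The engine is a left-to-right dynamic program along a path. Given a path $q_1q_2\cdots q_t$ and a family $\A_1\subseteq\binom{L(q_1)}{m}$ of permitted colorings of $q_1$, put $\A_{i+1}=\{\,T\in\binom{L(q_{i+1})}{m}: T\cap S=\emptyset\text{ for some }S\in\A_i\,\}$; then $\A_i$ is exactly the set of colorings of $q_i$ that extend back to an $L$-coloring of $q_1\cdots q_i$ with $\vph(q_1)\in\A_1$. Each $\A_i$ is a subfamily of a $k$-element set and each step costs $O(k^{2})$, so a full sweep is $O(t)$. Two consequences: a path is always $m$-fold $L$-colorable (greedily, since $\card{L(q_{i+1})\setminus\vph(q_i)}\ge 2m-m=m$), and likewise a pendant thread is colorable given \emph{any} coloring of its branch endpoint; hence pendant threads and $W$-free path components never obstruct a coloring and can be ignored from here on. For a cycle component $q_1\cdots q_\ell q_1$ I would, for each of the $\le k$ choices of $\vph(q_1)$, run the sweep with $\A_1=\{\vph(q_1)\}$ and ask whether some $T\in\A_\ell$ satisfies $T\cap\vph(q_1)=\emptyset$; the cycle is colorable iff some choice works, in time $O(\ell)$.

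For a thread joining branch vertices $w$ and $w'$ I would precompute a constant-size compatibility table: for each of the $\le k$ colorings $S$ of $w$, sweep the thread's interior starting from $\A_1=\{T: T\cap S=\emptyset\}$ and record, for each coloring $S'$ of $w'$, whether some reachable coloring of the last interior vertex is disjoint from $S'$; the degenerate cases (a single edge $ww'$, or a loop thread with $w=w'$) are immediate. This costs $O(\text{length})$ per thread, hence $O(|E(G)|)$ overall. Finally I would enumerate all $\le k^{C}$ colorings of the branch vertices; for each one, check every thread and every branch--branch edge against the precomputed tables (and ordinary disjointness) in $O(1)$ apiece, ignoring pendant threads and $W$-free components as noted. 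Then $G$ has an $m$-fold $L$-coloring if and only if every cycle component is colorable and some branch coloring passes all checks. The running time is $O\bigl(k^{C}(|V(G)|+|E(G)|)\bigr)$, which is linear since $m$ and $C$ are fixed.

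The only real obstacle is organizational: setting up the thread decomposition cleanly (branch vertices, pendant threads, loop threads, single-edge threads, cycle components) and verifying that the path dynamic program correctly reports, for each fixed endpoint coloring, precisely which colorings of the other endpoint are extendable --- after which everything reduces to the observation that each of the $O(|E(G)|)$ local objects has constant size and that the $k^{C}$ global choices are each checked in linear time. (One could instead note that deleting $W$ leaves a graph of treewidth at most $2$, so $G$ has treewidth at most $C+2$, and invoke a generic bounded-treewidth algorithm; the argument above is just that algorithm made explicit.)
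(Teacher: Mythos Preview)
Your proof is correct and follows essentially the same approach as the paper: identify the at most $C$ branch vertices, run a linear-time dynamic program along each path between them to precompute which endpoint-coloring pairs extend, and then enumerate the $\binom{2m}{m}^{C}$ colorings of the branch set, checking each against the precomputed tables. You are in fact slightly more careful than the paper, which asserts that $G\setminus X$ is a disjoint union of paths without explicitly treating cycle components that contain no branch vertex; your separate handling of such cycles (and of pendant threads) fills that small gap.
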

\begin{proof}
Let $X$ be the set of vertices of degree at least 3 and $G'=G\setminus X$.  Note
that $G'$ is a disjoint union of paths.  For each path $P$, we compute the
$m$-fold $L$-colorings of its endpoints that extend to an $m$-fold $L$-coloring
of $P$, as follows.

Let $V(P)=\{v_1,\ldots,v_k\}$.  For each $i\ge 2$, we compute the $m$-fold
$L$-colorings of $v_1$ and $v_i$ that extend to $G[v_1,\ldots,v_i]$.  Having
solved the problem for $v_1$ and $v_{i-1}$, we can solve it for
$v_1$ and $v_i$ in constant time.  Thus, we can solve the problem for the
endpoints of each path $P$ in time linear in the length of $P$.  
Now we consider all at most ${2m\choose m}^C$ $m$-fold $L$-colorings of $X$ and
check which of these (if any) extend to all paths in $G'$.
Since $m$ and $C$ are constants, so is ${2m\choose m}^C$.  Hence, it suffices
to check whether a single $m$-fold $L$-coloring $\vph$ of $X$
extends to $G'$

Fix an $m$-fold $L$-coloring of $X$.  Having preprocessed each path $P$ in $G'$
as above, we can check whether $\vph$ extends to $P$ in constant time (searching
through the constant-sized results of the preprocessing).  Thus, we can check
whether $\vph$ extends to $G'$ in time linear in the number of paths in $G'$
(which is at most linear in the order of $G$).

Finally,
suppose $G$ has an $m$-fold $L$-coloring and we want to construct the coloring.
For each path $P$, for each $m$-fold $L$-coloring of $v_1$ and $v_i$ that
extends to $G[v_1,\ldots,v_i]$ we store at $v_i$ a pointer
to one coloring of $v_{i-1}$ such that these colorings of $v_1, v_{i-1}, v_i$
extend to the path induced by $v_1,\ldots, v_i$.  Now, given any $m$-fold
$L$-colorings of $v_1$ and $v_k$ that extend to $G[v_1,\ldots,v_k]$, it is
straightforward to recursively construct such an extension.
\end{proof}

%
%
%
%

\section*{Acknowledgments}
This paper would not exist if not for Conjecture~\ref{main-conj},
posed by Meng, Puleo, and Zhu.  At the author's request, they included
their code for checking $(4,2)$-choosability as an auxiliary file
for~\cite{MPZarxiv}.  Also, most of the ``bad'' list assignments we
use here already appear in their paper.  Thanks to the National
Security Agency for supporting this work, under NSA Grant H98230-16-0351.  
Thanks to Howard Community College for its hospitality during the
preparation of this paper.  Thanks to an anonymous referee for helpful feedback.
Finally, thanks to Landon Rabern for his
computer program to verify that the 2-connected graph in
Figure~\ref{figGG} is (4,2)-choosable.

\scriptsize
\bibliographystyle{plain}
\bibliography{GraphColoring1}


\setlength{\tabcolsep}{2pt}
\section*{Appendix: Flat 4-assignments for $K_{2,3}$ and $C_4$}
We denote $V(K_{2,3})$ by $\{x_1,x_2,x_3,y_1,y_2\}$, where $d(x_i)=2$ and
$d(y_j)=3$.
We denote $V(C_4)$ by $\{v_1,v_2,v_3,v_4\}$ in order around the cycle.
The double lines in each table demark 4-assignments with different pot sizes, and the single lines
demark different multisets of sizes of $|V(G_\alpha)|$, over all $\alpha\in
\pot(L)$.

\begin{table}[!h]
\centering
\footnotesize
\begin{minipage}[t]{.33\linewidth}
\begin{tabular}{ccccc}
$L(x_1)$ & $L(x_2)$ & $L(x_3)$ & $L(y_1)$ & $L(y_2)$\\
\hline
1234& 2356& 4567& 1567& 2347\\
\hline
\hline
3456& 1256& 1234& 1256& 3456\\
1356& 2456& 1234& 1256& 3456\\
3456& 1235& 1246& 1256& 3456\\
3456& 1235& 1246& 1356& 2456\\

2456& 3456& 1236& 1235& 1456\\
2456& 2356& 1346& 1235& 1456\\
1456& 2356& 2346& 1235& 1456\\
1256& 3456& 2346& 1235& 1456\\ 

1256& 3456& 3456& 1256& 1234\\ 
1356& 2456& 3456& 1256& 1234\\
1256& 3456& 3456& 1235& 1246\\
1356& 2456& 3456& 1235& 1246\\

\hline

1236& 1246& 3456& 1256& 3456\\
1236& 1246& 3456& 1356& 2456\\

\hline

1236& 2456& 3456& 2356& 1456\\ 
1256& 2356& 3456& 2456& 1346\\

\hline

1456& 2456& 3456& 3456& 1236\\

\hline
\end{tabular}
\end{minipage}
%
\begin{minipage}[t]{.33\linewidth}
\begin{tabular}{ccccc}
$L(x_1)$ & $L(x_2)$ & $L(x_3)$ & $L(y_1)$ & $L(y_2)$\\

\hline

1234& 2456& 3456& 2356& 1456\\ 
1245& 2456& 3456& 2356& 1346\\
1456& 2456& 3456& 2356& 1234\\ 
1245& 3456& 2356& 2456& 1346\\ 
1456& 3456& 2356& 2456& 1234\\ 
1245& 3456& 3456& 2456& 1236\\
1245& 2456& 3456& 3456& 1236\\ 
1234& 2456& 3456& 3456& 1256\\ 

\hline

1234& 3456& 3456& 2356& 1456\\ 
1356& 2456& 3456& 3456& 1234\\ 

\hline
\hline
1234& 1235& 1245& 1345& 2345\\
\hline
1235& 2345& 2345& 1245& 1345\\
\hline
1235& 1245& 2345& 1345& 2345\\
\hline
1245& 2345& 2345& 1345& 2345\\
\hline
1345& 1345& 2345& 1245& 2345\\
\hline
1245& 1345& 2345& 1345& 2345\\
\hline
\hline
1234& 1234& 1234& 1234& 1234\\
\hline
\\
\end{tabular}
\end{minipage}
\begin{minipage}[t]{.22\linewidth}
\begin{tabular}{cccc}
$L(v_1)$ & $L(v_2)$ & $L(v_3)$ & $L(v_4)$\\
\hline
1256& 1345& 3456& 2346\\
\hline
\hline
1235& 1245& 1345& 2345\\
1245& 1345& 2345& 2345\\
\hline
\hline
1234& 1234& 1234& 1234\\
\hline
\\
\\
\\
\\
\\
\\
\\
\\
\\
\\
\\
\\
\\
\\
\end{tabular}
\end{minipage}
\caption{The 35 flat 4-assignments for $K_{2,3}$ and 4 flat 4-assignments for
$C_4$.}
\end{table}

\end{document}